\documentclass[11pt]{amsart}
\usepackage[margin=1in]{geometry}

\usepackage{amssymb,comment}
\usepackage{amsthm}
\usepackage{amsmath}
\usepackage{mathrsfs}
\usepackage{amsbsy}
\usepackage[all]{xy}
\usepackage{bm}
\usepackage{hyperref}
\usepackage{tikz}
\usepackage{array}
\usepackage{float}
\usepackage{enumerate}
\usepackage{xcolor}
\usepackage{hhline}
\allowdisplaybreaks
\usepackage[noadjust]{cite}

\usepackage{caption}
\usepackage{subcaption}
\usepackage{tabu}
\usepackage{diagbox}
\usepackage{tikz}
\usepackage{bbm}
\usepackage{booktabs}

\DeclareFontFamily{U}{rcjhbltx}{}
\DeclareFontShape{U}{rcjhbltx}{m}{n}{<->rcjhbltx}{}
\DeclareSymbolFont{hebrewletters}{U}{rcjhbltx}{m}{n}
\DeclareMathOperator{\ptn}{part}
\DeclareMathOperator{\bu}{{\bf u}}
\DeclareMathOperator{\FS}{FS}
\DeclareMathOperator{\Vol}{vol}

\let\aleph\relax\let\beth\relax

\DeclareMathSymbol{\aleph}{\mathord}{hebrewletters}{39}
\DeclareMathSymbol{\beth}{\mathord}{hebrewletters}{98}

\usepackage[noabbrev,capitalise]{cleveref}

\newenvironment{enumerate*}%
  {\begin{enumerate}[(I)]%
    \setlength{\itemsep}{10pt}%
    \setlength{\parskip}{0pt}}%
  {\end{enumerate}}

\newtheorem{theorem}{Theorem}[section]
\newtheorem{proposition}[theorem]{Proposition}

\newtheorem{lemma}[theorem]{Lemma}

\theoremstyle{definition}

\newcommand{\Z}{\mathbb{Z}}
\title{Gap sets of random generalized numerical semigroups}

\author{Veronica Bitonti}
\address{Veronica Bitonti, Merton College, Oxford and Mathematical Institute, University of Oxford; Merton Street, Oxford OX1 4JD, United Kingdom}
\email{veronica.bitonti@maths.ox.ac.uk}

\author{Noah Kravitz}
\address{Noah Kravitz, St John's College, Oxford and Mathematical Institute, University of Oxford; St Giles', Oxford OX1 3JP, United Kingdom}
\email{noah.kravitz@maths.ox.ac.uk}

\begin{document}

\begin{abstract}
For a fixed positive integer $d$ and a small real $p>0$, sample a $p$-random subset $A \subseteq \mathbb{Z}_{\geq 0}^d$, and let $S:=\langle A \rangle$ be the generalized numerical semigroup generated by $A$.  We show that with high probability (as $p \to 0$), the gap set $\mathbb{Z}_{\geq 0}^d \setminus S$ is well approximated by the shifted hyperboloid region $$\{(x_1, \ldots, x_d) \in \mathbb{R}_{\geq 0}^d: (x_1+\log p^{-1}) \cdots (x_d+\log p^{-1})\ll p^{-1}(\log p^{-1})^{d+1}\}.$$
This generalizes work of the second author, Morales, and Schildkraut on the $1$-dimensional setting.  We also obtain the same result with $S$ replaced by the set of subset sums of $A$.
\end{abstract}

\maketitle

\section{Introduction}

\subsection{Random numerical semigroups}
A subset $S$ of an abelian group $G$ is a \emph{semigroup} if it is closed under addition.  Much attention has been lavished on the setting $G=\mathbb{Z}$, where a cofinite semigroup $S \subseteq \mathbb{Z}_{\geq 0}$ containing $0$ is called a \emph{numerical semigroup}; the complement $\mathbb{Z}_{\geq 0} \setminus S$ is called the \emph{gap set} of $S$.  Classical invariants attached to a numerical semigroup $S$ include the \emph{Frobenius number} (the largest element of the gap set) and the \emph{genus} (the size of the gap set).%, and the \emph{embedding dimension} (the minimum size of a generating set for $S$).

A popular topic of inquiry has been the behavior of ``random'' numerical semigroups.  One such notion is the \emph{Erd\H{o}s--R\'enyi model}, first introduced by De Loera, O'Neill, and Wilburne~\cite{DLOW}.  For any subset $A \subseteq \mathbb{Z}_{\geq 0}$, the set
$$\langle A \rangle:=\{a_1+\cdots+a_k: k \in \mathbb{Z}_{\geq 0}, ~a_1, \ldots, a_k \in A\}$$
generated by $A$ is a semigroup (the summands $a_i$ may be repeated); it is a numerical semigroup if and only if the total gcd of the elements of $A$ is $1$.  %(It is easy to see that every numerical semigroup has a unique minimal generating set and that this minimal generating set is finite.)  
For the Erd\H{o}s--R\'enyi model, fix a small parameter $p>0$ and sample a $p$-random  subset $A \subseteq \mathbb{Z}_{\geq 0}$, namely, a subset in which each positive integer is included independently with probability $p$; then form the semigroup $S:=\langle A \rangle$.  Notice that $S$ is a bona fide numerical semigroup with probability $1$ since $A$ will contain $2$ consecutive integers.  Typically $A$ is not the minimal generating set for $S$.  See \cite{Aliev2011} and the references therein for work on other models of random numerical semigroups.

Improving earlier results of~\cite{DLOW, BM}, the second author, Morales, and Schildkraut~\cite{KMS} established the with-high-probability asymptotic behavior of the aforementioned two invariants for Erd\H{o}s--R\'enyi random numerical semigroups.

\begin{theorem}[{\cite[Theorem 1.1]{KMS}}]\label{thm:1-dim}
Let $p>0$, sample a $p$-random subset $A \subseteq \mathbb{Z}_{ \geq 0}$, and set $S:=\langle A \rangle$.  Then with high probability (as $p \to 0$), the genus and Frobenius number of $S$ are both of size $\asymp p^{-1}(\log p^{-1})^2$.%, and the embedding dimension of $S$ is of size $\asymp (\log p^{-1})^2$.
\end{theorem}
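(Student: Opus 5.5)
The plan is to prove matching lower and upper bounds of order $N:=p^{-1}(\log p^{-1})^2$. Since the genus is at most the Frobenius number plus one, it suffices to show two things with high probability: the genus is $\gg N$, and the Frobenius number is $\ll N$. Throughout, write $L:=\log p^{-1}$.

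\emph{Lower bound on the genus.} Set $M:=cN$ for a small constant $c>0$. We bound the expected number of elements of $S\cap[0,M]$. An element $n\le M$ lies in $S$ only if it can be written as a sum $a_1+\cdots+a_k$ of elements of $A$. We split by the smallest element $a_1=\min A$.
\begin{itemize}
\item With high probability $\min A \ge \epsilon p^{-1}$, so every representation has $k\le M/(\epsilon p^{-1})=(c/\epsilon)L^2$ summands.
\item More precisely, we count multisets $\{a_1\le\cdots\le a_k\}\subseteq A$ with sum at most $M$. The expected number is at most $\sum_k p^k\cdot\#\{\text{partitions into }k\text{ parts of size}\le M\}\le \sum_k p^k M^k/(k!)^2$-type bounds.
\item The cleaner route is to bound the expected number of elements of $A\cap[0,x]$ by $px$. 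Elements of $S$ below $M$ come from $j$-fold sums with the $j$ summands in dyadic ranges $[2^i,2^{i+1})$. The number of elements of $A$ below $M/j$ is about $pM/j$, and the number of $j$-fold sums is about $\binom{pM/j+j}{j}$.
\item Since $pM=cL^2$, the quantity $\sum_j (e\,cL^2/j^2)^j$ is dominated by $j\approx \sqrt{c}L$. This gives roughly $e^{O(\sqrt c)L}=p^{-O(\sqrt c)}$, which is $\ll M$ for $c$ small.
\end{itemize}
So by Markov's inequality, $|S\cap[0,M]|\le M/2$ with high probability, and the genus is $\ge M/2\gg N$. The key fact making this work is that the count of $j$-fold sums behaves like $(pM/j^2)^j$ up to exponential factors, using multisets sorted by size.

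\emph{Upper bound on the Frobenius number.} Set $M:=CN$ for a large constant $C$. We must show that every $n\ge M$ lies in $S$ with high probability.
\begin{itemize}
\item First, with high probability $A\cap[p^{-1}L^2, 2p^{-1}L^2]$ contains two consecutive integers $b,b+1$ for some suitable $b\asymp p^{-1}L^2$. Alternatively, we use the generators to build $S$ modulo a single element $a_0\in A$ with $a_0\le p^{-1}L$; such an element exists with high probability.
\item We then show that $S$ hits every residue class modulo $a_0$ by some element $\le CN$. Once this holds, every $n\ge CN$ lies in $S$, since $n$ equals that element plus a nonnegative multiple of $a_0$.
\item To cover residues, consider the sets $A_i:=A\cap I_i$ for about $L$ disjoint intervals $I_i$ of length $p^{-1}L$ inside $[0,p^{-1}L^2]$. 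Each $A_i$ has about $L$ elements, whose residues mod $a_0$ are roughly uniform.
\item The sumset of residues $\{0\}\cup A_1+\cdots+\{0\}\cup A_r$ grows. A Cauchy--Davenport/Kneser-style or random-growth argument shows that each step multiplies the covered set, or adds about $L$ times its size, until the whole group $\mathbb{Z}/a_0$ of size about $p^{-1}L$ is covered.
\item The number of steps needed is $O(\log(p^{-1}L)/\log L)$ for multiplicative growth. However, we need about $L$ steps in the worst case, using an additive growth rate of about $p^{-1}$ per step.
\item The resulting sums are at most (number of steps) $\times\, p^{-1}L^2$, which may be too large. So we instead use $O(L)$ steps, each with summands of size $\le p^{-1}L$, giving total size $O(p^{-1}L^2)=O(N)$.
\end{itemize}

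\textbf{Main obstacle.} The main obstacle is making the residue-covering growth rigorous with the correct size bound. Concretely, each new block of about $L$ random residues must enlarge a covered set $X\subseteq\mathbb{Z}/a_0$ either multiplicatively, while $X$ is small, or by a constant fraction of its complement, while $X$ is large. The covering must complete in $O(L)$ blocks of summand size $O(p^{-1}L)$. I would handle this in two regimes. While $|X|\le a_0/2$, expected growth with independent fresh randomness in each block gives $|X+\{0,a\}|\ge (1+\Omega(1))|X|$ for most $a$. Once $|X|$ is large, the complement shrinks by a constant factor per block. A union bound over the $O(L)$ blocks then finishes the argument.
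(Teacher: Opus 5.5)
\textbf{The lower bound has a genuine gap.} None of your three routes gives $|S\cap[0,M]|=o(M)$ with high probability, because none of them controls the few smallest elements of $A$.

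(i) $\mathbb{P}[\min A<\epsilon p^{-1}]\approx\epsilon$, so $\min A\ge\epsilon p^{-1}$ is not a high-probability event for fixed $\epsilon$.

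(ii) The first-moment bound $\sum_k p^k\cdot\#\{\cdots\}$ is invalid. A multiset with repeated parts lies in $A$ with probability $p^{\#\text{distinct parts}}$, not $p^k$. With the correct weights, the expected number of sub-multisets of $A$ with sum at most $M$ is at least $e^{\Omega(\sqrt c\,L^{3/2})}\gg M$. To see this, give each of the $\asymp L$ dyadic scales between $\sqrt{cL}$ and $p^{-1}$ a budget $M/L$. Each scale then independently contributes a factor $e^{\Omega(\sqrt{cL})}$, coming from rare configurations with about $\sqrt{cL}$ elements of $A$ at that scale. So this expectation bounds nothing.

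(iii) The conditional count is the right deterministic step. Given $|A\cap[0,x]|\le Kpx$ for all $x$, partition counting (Lemma~\ref{lem:ptn} with $d=1$) gives $e^{O(\sqrt{Kc}\,L)}$. Your heuristic $\binom{pM/j+j}{j}$ is not a substitute, since it wrongly assumes all summands of a $j$-fold sum are $\le M/j$. But this step needs the density bound uniformly in $x$, and that bound fails with probability about $1/K$ (whenever $\min A<(Kp)^{-1}$). You cannot have both $K=O(1)$, which is needed for $e^{O(\sqrt{Kc}L)}=o(M)$, and failure probability $o(1)$.

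The missing idea is the three-way split in the proof of Proposition~\ref{prop:region-under-hyperplane}, taken with $d=1$ and $Z=M$:
\begin{enumerate}
\item[(a)] Elements below $h_1M$, with $h_1=e^{-\Theta(\sqrt L)}$, are absent with high probability.
\item[(b)] Elements in $(h_1M,h_2M]$, with $pMh_2\asymp\sqrt L$, number $O(\sqrt L)$. Each appears fewer than $h_1^{-1}$ times in a sum $\le M$, so together they contribute a factor at most $p^{-\varepsilon}$.
\item[(c)] Only the elements above $h_2M$ enter the partition count. For them $ph_2M\to\infty$, so Chernoff and a dyadic union bound make the uniform density hypothesis hold with high probability.
\end{enumerate}
Multiplying the three counts bounds $|S\cap[0,M]|$ by a small power of $p^{-1}$ on a high-probability event, with no Markov step.

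\textbf{The upper bound is sound in outline but needs repair.} Your strategy covers every residue modulo some $a_0\in A$ with $a_0\le p^{-1}L$ by sums of size $O(p^{-1}L^2)$, then adds multiples of $a_0$. Several steps need fixing.
\begin{enumerate}
\item[(a)] The two-consecutive-integers option is false. The expected number of such pairs in an interval of length $p^{-1}L^2$ is about $pL^2\to0$. It would also be useless, since $\langle b,b+1\rangle$ has Frobenius number $\asymp b^2$.
\item[(b)] The block scheme does not give the size bound. One summand from each of $\asymp L$ blocks in $[0,p^{-1}L^2]$ gives sums of order $p^{-1}L^3$. Moreover, there are only $O(L)$ elements of $A$ of size $O(p^{-1}L)$ altogether.
\item[(c)] The coherent version uses single-element steps $X\mapsto X+\{0,a\}$ over $O(L)$ elements $a\le Kp^{-1}L$. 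Your two-regime argument, using $\mathbb{E}_a|X\cap(X+a)|=|X|^2/a_0$ first for $X$ and then for its complement, works there if the residues are independent and uniform.
\item[(d)] Successive elements of $A$ do not have independent uniform residues. Draw them instead from a union of complete residue systems modulo $a_0$ lying beyond where $a_0$ was found (the fundamental-domain device of Lemma~\ref{lem:ab-gp-lift}), or simply invoke Lemma~\ref{lem:finite-ab-gp}.
\end{enumerate}

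This Ap\'ery-set route differs from the paper's proof, which is the case $d=1$ of Proposition~\ref{prop:upper-single-box}. The paper never reduces modulo an element of $A$. It uses Lemma~\ref{lem:ab-gp-lift} to get at least $Y'\gg p^{-1}L$ subset sums in a short initial interval. It then hits each target $x$ with a complementary element $x-z\in A$, and handles very large $x$ by a correlation argument. Because no summand is repeated, the paper's argument also proves the $\FS(A)$ statement, which an argument built on multiples of $a_0$ cannot.
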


The paper~\cite{KMS} (see Propositions 2.5 and 3.3 there) actually produces an absolute constant $C>0$ such that with high probability, $S$ contains a $o(1)$-proportion of the positive integers up to  $C^{-1}p^{-1}(\log p^{-1})^2$ and contains all of the positive integers larger than $Cp^{-1}(\log p^{-1})^2$; thus the gap set $\mathbb{Z}_{\geq 0} \setminus S$ is well approximated by the interval of positive integers up to scale $p^{-1}(\log p^{-1})^2$.

\subsection{Random generalized numerical semigroups}

It is also natural to look at higher-dimensional analogues of this problem.  In the setting $G=\mathbb{Z}^d$ with $d>1$, a cofinite semigroup $S \subseteq \mathbb{Z}_{\geq 0}^d$ containing $0$ is often called a \emph{generalized numerical semigroup}.  In general there is not a single ``largest'' gap element that can be used to define the Frobenius number of a generalized numerical semigroup, but the genus (number of gaps) is still a useful concept.  The Erd\H{o}s--R\'enyi model in the multidimensional setting is defined just as in the $1$-dimensional setting.

Our main result provides a with-high-probability description of the gap set of an Erd\H{o}s--R\'enyi random generalized numerical semigroup.  The interesting new aspect of the $d>1$ setting is that one can ask about not only the \emph{size} of the gap set $\mathbb{Z}_{\geq 0}^d \setminus S$ but also its \emph{shape}.  To this end, let $$\mathcal{R}_d(p,Z):=\{(x_1, \ldots, x_d) \in \mathbb{Z}_{\geq 0}^d: (x_1+\log p^{-1}) \cdots (x_d+\log p^{-1})\leq Z\}$$
denote the set of lattice points under a shifted hyperboloid; see \cref{fig:R2-region}.  We show that this shifted hyperboloid is, in an appropriate sense, the ``scaling limit'' for the gap set of a random $\langle A \rangle$.

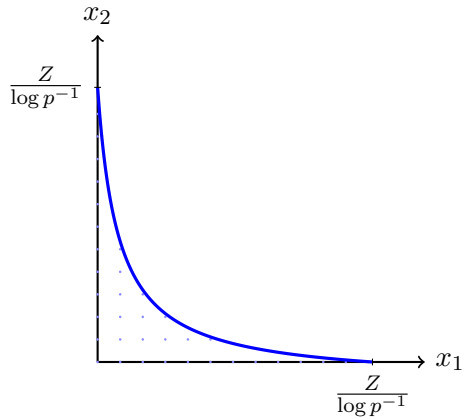
\begin{figure}
    \centering
\begin{tikzpicture}[scale=0.30]

    % Parameters: change these as desired
    \pgfmathsetmacro{\p}{0.4}
    \pgfmathsetmacro{\Z}{12}
    \pgfmathsetmacro{\a}{ln(1/\p)}   % a = log(p^{-1})

    % Useful endpoint: where the curve meets the x_1-axis
    \pgfmathsetmacro{\xmaxcurve}{\Z/\a - \a}
    \pgfmathsetmacro{\ymaxcurve}{\xmaxcurve}

    % Axis lengths
    \pgfmathsetmacro{\Xmax}{ceil(\xmaxcurve) + 1}
    \pgfmathsetmacro{\Ymax}{ceil(\ymaxcurve) + 1}

    % Axes
    \draw[->, thick] (0,0) -- (\Xmax+0.5,0) node[right] {$x_1$};
    \draw[->, thick] (0,0) -- (0,\Ymax+0.5) node[above] {$x_2$};

    % Optional tick marks
    %\foreach \x in {1,...,\Xmax}
       % \draw (\x,0.08) -- (\x,-0.08) node[below] {\small $\x$};
    %\foreach \y in {1,...,\Ymax}
       % \draw (0.08,\y) -- (-0.08,\y) node[left] {\small $\y$};

    % Lattice points in the region:
    % (x_1 + a)(x_2 + a) < Z, with x_1,x_2 in Z_{\ge 0}
    \foreach \i in {0,...,\Xmax} {
        \foreach \j in {0,...,\Ymax} {
            \pgfmathparse{(\i+\a)*(\j+\a) < \Z ? 1 : 0}
            \ifnum\pgfmathresult=1
                \fill[blue!45] (\i,\j) circle (1.6pt);
            \fi
        }
    }

    % Boundary curve: (x_1+a)(x_2+a)=Z  =>  x_2 = Z/(x_1+a)-a
    \draw[domain=0:\xmaxcurve, samples=200, smooth, very thick, blue]
        plot (\x, {\Z/(\x+\a) - \a});

    % Equation label
 %   \node[black, right] at ({0.15*\xmaxcurve}, {\Z/(0.15*\xmaxcurve+\a)-\a + 0.8})
%        {$\,Z=(x_1+\log p^{-1})(x_2+\log p^{-1})$};
        % Tick at intersection with x_1-axis
\draw (\xmaxcurve,0.15) -- (\xmaxcurve,-0.15)
    node[below] {$\frac{Z}{\log p^{-1}}$};
    % Tick at intersection with x_2-axis
\draw (0.15,\ymaxcurve) -- (-0.15,\ymaxcurve)
    node[left] {$\frac{Z}{\log p^{-1}}$};

\end{tikzpicture}
 \caption{The region $\mathcal{R}_2(p,Z)$ lies under the curve $(x_1+\log p^{-1})(x_2+\log p^{-1})=Z$.}
    \label{fig:R2-region}
\end{figure}

\begin{theorem}\label{thm:main}
For each positive integer $d$, there is a constant $C=C_d>0$ such that the following holds.  Let $p>0$, sample a $p$-random subset $A \subseteq \mathbb{Z}_{ \geq 0}^d$, and set $S:=\langle A \rangle$.  Then with high probability (as $p \to 0$), we have
$$
|S \cap \mathcal{R}_d(p,C^{-1} p^{-1}(\log p^{-1})^{d+1})|=o(|\mathcal{R}_d(p,C^{-1} p^{-1}(\log p^{-1})^{d+1})|)$$
and
$$\mathbb{Z}_{\geq 0}^d \setminus \mathcal{R}_d(p,C p^{-1}(\log p^{-1})^{d+1}) \subseteq S,
$$
and in particular the gap set has size $$|\mathbb{Z}_{\geq 0}^d \setminus S| \asymp_d p^{-1}(\log p^{-1})^{2d}.$$
%Moreover, with high probability the embedding dimension is $\asymp_d (\log p^{-1})^{2d}$.
\end{theorem}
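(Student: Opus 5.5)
The plan has two halves, a lower bound (a typical point of the small region is a gap) and an upper bound (every point outside the large region lies in $S$), followed by a volume computation. Write $L:=\log p^{-1}$ and, for $x\in\mathbb{Z}_{\geq 0}^d$, set $P(x):=\prod_i (x_i+L)$. The guiding heuristic is the following. An element $x$ lies in $S$ essentially when it can be written as a sum of a bounded number of elements of $A$ (in fact about $L$ summands suffice). The number of ways to write $x$ as an ordered sum of $k$ elements of the box $\prod_i[0,x_i]$ is about $\prod_i x_i^{k-1}/((k-1)!)^d$. Each such representation has probability about $p^k$, so the expected number of representations is
\[
\frac{p^k \prod_i x_i^{k-1}}{((k-1)!)^d} \approx \Bigl(\frac{p\prod_i x_i}{(k/e)^d}\Bigr)^{k}.
\]
This threshold becomes $1$ when $\prod_i x_i\approx p^{-1}(k/e)^d$, and the optimal $k$ is of order $L$, since we need $k \gtrsim L$ to beat the small terms. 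This gives $\prod x_i \approx p^{-1}L^d$. The shift by $L$ in each coordinate reflects that any summand with a tiny coordinate is costly; more precisely, coordinates smaller than $L$ behave as if they were of size $L$, because the $k$ summands each need a coordinate value.

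\textbf{Lower bound.} Fix $x\in\mathcal{R}_d(p,C^{-1}p^{-1}L^{d+1})$. The event $x\in S$ requires a multiset $\{a_1,\dots,a_k\}\subseteq A$ of distinct elements with multiplicities $m_j$ and $\sum m_j a_j=x$. I will union bound over $k$ distinct elements $a_j\le x$ (coordinatewise) and over multiplicity vectors. Take the last distinct element to be determined by the others (the case where all multiplicities of the last element are larger needs a minor divisibility adjustment). The probability is then at most
\[
\sum_k p^k \cdot \#\{(a_1,\dots,a_{k-1},m)\}.
\]
The count of $(k-1)$-tuples of distinct elements with partial sums dominated by $x$ is at most $\prod_i \binom{x_i+k}{k-1}$, using compositions coordinatewise. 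The factor $k$ added in each coordinate is exactly where the shift $x_i+L$ enters. Using $\binom{x_i+k}{k}\le (e(x_i+k)/k)^k$ gives a per-$k$ bound of
\[
\Bigl(p\, e^{O(d)}P(x)\,k^{-d}\Bigr)^k,
\]
up to factors for the multiplicities. This is tiny for $L/C'\le k$ when $P(x)\le C^{-1}p^{-1}L^{d+1}$. For $k\le L/C'$, the trivial bound $p^k\prod(x_i+1)^{k-1}$ is also small because $p\prod(x_i+1)\ll 1$ is false in general, so I will instead split differently. I expect to need the bound $\prod x_i^{k-1}\le (P(x))^{k-1}$ together with $p^{k}P^{k-1}=p(pP)^{k-1}$. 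Here $pP\le L^{d+1}/C$, and $((k-1)!)^d$ rescues this once $k\gtrsim L$. For small $k$, the factor $p$ beats $(L^{d+1})^{k}$ only for $k\ll L/\log L$. So the middle range is handled by the factorial, and the small range by the leftover $p$, but only in expectation over $x$. This gives $\mathbb{E}|S\cap\mathcal{R}|=o(|\mathcal{R}|)$, and Markov's inequality finishes the lower bound. Multiplicities are handled as in \cite{KMS}: repeated summands cost only a polynomial-in-$k$ factor per distinct element.

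\textbf{Upper bound (main obstacle).} We must show that every $x$ with $P(x)\ge Cp^{-1}L^{d+1}$ lies in $S$, simultaneously over infinitely many points. The plan is constructive, generalizing \cite{KMS}.
\begin{enumerate}
\item First, with high probability $A$ contains, for each coordinate direction, elements forming a ``local generating set'': near each axis at height about $L$ there are elements of $A$ whose differences generate $\mathbb{Z}^d$ with bounded coefficients. This already occurs within a box of volume about $p^{-1}$.
\item Second, apply a second-moment or Janson argument to show that sums of $k\approx L$ elements of $A$ drawn from a box of shape $\prod[0,y_i]$ with $\prod(y_i+L)\approx C p^{-1}L^{d+1}$ cover a positive proportion, and then all, of a translate of a fundamental domain. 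The number of representations concentrates because its mean is $\ge L^{\Omega(1)}$ while the overlaps are controlled.
\item Third, pass from ``all points in a thick shell'' to all points beyond it by adding elements from step 1, which are translations with small coordinates. The hyperboloid region is down-closed, so its complement is up-closed, and adding semigroup elements propagates coverage outward.
\end{enumerate}
Uniformity is secured by covering the boundary of the hyperboloid with $L^{O(d)}\cdot p^{-o(1)}$ boxes and taking a union bound; far regions are handled by monotonicity.

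Finally, the genus estimate follows by counting lattice points. The number of lattice points in $\mathcal{R}_d(p,Z)$ is
\[
\asymp Z(\log(Z/L^d))^{d-1}.
\]
With $Z=p^{-1}L^{d+1}$ this gives $p^{-1}L^{d+1}\cdot L^{d-1}=p^{-1}L^{2d}$, and changing $C$ alters this only by constants.
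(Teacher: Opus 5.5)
Your lower bound fails because of multiplicities. There is also a fixable slip. With ordered $(k-1)$-tuples, the $k$-th term $(p\,e^{O(d)}P(x)k^{-d})^k$ is about $(e^{O(d)}L/C)^k$ at $k\approx L$, which is not small, and the same missing factor is why your heuristic produced $p^{-1}L^d$. Dividing by $(k-1)!$ for sets of distinct summands gives roughly $p\,(e^{O(d)}pP(x)k^{-(d+1)})^{k-1}$. With that fix your union bound, including the nice explanation of the shift via $x_i+k$, does give $\mathbb{P}[x\in\FS(A)]\le p^{1-O(C^{-1/(d+1)})}$.

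For $\langle A\rangle$, however, a factor $k^{a}$ per distinct summand multiplies the $k$-th term by $k^{ak}$ and lowers the threshold to $p^{-1}L^{d+1-a}$. So ``polynomial in $k$'' is fatal, not harmless. The true cost is of exactly this size for summands on a coordinate axis (all summands when $d=1$): such a $b$ can be repeated up to $x_i/b_i$ times, which contributes a divisor-type factor $\approx\log x_i\approx L$. Concretely, for $d=1$ the $k$-th term becomes roughly $p\,(e^2pxL/k^2)^k$ instead of $p\,(e^2px/k^2)^k$. Its maximum $p\,e^{2\sqrt{pxL}}$ is already a positive power of $p^{-1}$ once $x$ is a large constant times $p^{-1}L$. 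Hence no union bound over representations can show that typical $x\le cp^{-1}L^2$ are gaps. The expectation is carried by rare outcomes in which $A$ has unusually many small elements. Truncating does not help, since you would have to discard the $\Theta(L)$ elements of $A$ below a small constant times $p^{-1}L$. For $d\ge 2$ only axis summands have non-summable multiplicity cost, so a finer count might handle all but a vanishing fraction of $\mathcal{R}_d$; but that is not your argument, and $d=1$ is part of the statement.

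This is why the paper, following \cite{KMS}, never averages over representations. In the notation of Proposition~\ref{prop:region-under-hyperplane}, it shows that with high probability $A$ avoids $T(h_1)$, has only $O(L^{1/2})$ points in $T(h_2)$, and satisfies $|A\cap T(h)|\le\gamma h^d$. On that event it bounds $|\langle A_3\rangle\cap T^*(1)|$ \emph{deterministically} by Meinardus's colored-partition asymptotic (Lemma~\ref{lem:ptn}), where multiplicities are built into the count. It then treats summands with vanishing coordinates face by face and takes a union bound over the dyadic tetrahedra of Lemma~\ref{lem:covering with hyperplanes}.

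The upper-bound plan is missing its engine. Polynomially many points near the boundary must be covered, so per-point failure probabilities must be polynomially small. A second-moment or Janson bound for the number of $\approx L$-term representations of a fixed $x\in\mathbb{Z}^d$ will not give this. The summands come from a pool of only $O_C(L)$ elements of $A$ of the relevant size, so two representations typically share a constant fraction of their summands, and the correlation term swamps $\mu^2$. You also do not say how ``a positive proportion'' becomes ``all''. Step~1 is phrased in terms of differences, which a semigroup does not give you. ``Far regions by monotonicity'' is false as stated, since $S$ is not an up-set. Propagating outward by adding axis elements, as in Section~\ref{sec:syndetic}, works only after a layer thicker than those elements is covered, and that is the hard part. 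Points with some coordinate below $L$ are not addressed.

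The missing idea is two-round exposure with fresh randomness. Modulo $G=\prod_i\mathbb{Z}/Y_i'\mathbb{Z}$ with $Y_i'\approx Y_i/(DL)$, distinct nonempty subset sums of independent uniform elements are pairwise independent and uniform. So the elements of $A$ in a seed box of volume $Y'\asymp p^{-1}L$ have subset sums hitting every class of $G$ (Lemma~\ref{lem:finite-ab-gp}). Hence there are at least $Y'$ distinct subset sums $z$ in a box with sides below $Y_i/2$ (Lemma~\ref{lem:ab-gp-lift}). For each target $x$ the points $x-z$ are distinct and unrevealed, so $\mathbb{P}[x\notin S^*]\le(1-p)^{Y'}\le p^{\Omega(C'D^{-m})}$. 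This survives a union bound over $[1,p^{-2}]^m$ (Proposition~\ref{prop:upper-single-box}). Points $w$ beyond $p^{-2}$ follow from positive correlation with at least $\|w\|_\infty/3$ covered predecessors. Points with small coordinates follow by induction on dimension, adding one $a_{(e,z)}\in A$ per small offset $z$ (Proposition~\ref{prop:upper-bound-combined}). Your closing lattice-point count is fine.
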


We remark that by taking $C$ sufficiently large, one can get an arbitrary power-saving in the little-$o$ in the first part of the theorem (see the proof of Proposition~\ref{prop:lower} below).

Informally, Theorem~\ref{thm:main} says that the typical gap set $\mathbb{Z}_{ \geq 0}^d \setminus S$ is well approximated by the set of lattice points where each subset of $k$ coordinates (for $1 \leq k \leq d$) has product $\ll p^{-1}(\log p^{-1})^{k+1}$.  Near each coordinate hyperplane, one sees the typical behavior of a random generalized numerical semigroup in dimension $1$ smaller; away from the coordinate hyperplanes, the $k=d$ product constraint dominates.

The $d=1$ case of Theorem~\ref{thm:main} recovers Theorem~\ref{thm:1-dim}.  Conversely, one can apply Theorem~\ref{thm:1-dim} to each positive coordinate axis of $\mathbb{Z}_{\geq 0}^d$; this implies via the semigroup closure property that with high probability, a random numerical semigroup in $\mathbb{Z}_{\geq 0}^d$ contains the set of points where \emph{all} of the coordinates are at scale at least $p^{-1}(\log p^{-1})^2$.  This conclusion is significantly weaker than Theorem~\ref{thm:main}: For instance, in the regime where the coordinates $x_1, \ldots, x_d$ are all roughly equal to one another, Theorem~\ref{thm:main} tells us that the threshold value for inclusion in $S$ is at scale $p^{-1/d}(\log p^{-1})^{(d+1)/d}$, while the naive application of Theorem~\ref{thm:1-dim} tells us only that inclusion occurs past $p^{-1}(\log p^{-1})^2$.

The proof of Theorem~\ref{thm:main} both develops techniques from~\cite{KMS} and brings in new ideas.  We wish to emphasize that the upper and lower bounds match up to dilation by a constant factor; we found it surprising that our approach suffices to pin down the shape of the gap set so precisely.

\subsection{Random sets of subset sums}
Ben Green (personal communication) has suggested a variant of the above problem where the semigroup $S=\langle A \rangle$ is replaced by the set of subset sums
$$S^*=\FS(A):=\{a_1+\cdots+a_k: k \in \mathbb{Z}_{\geq 0}, a_1, \ldots, a_k \text{ distinct}\};$$
the difference between $\langle A \rangle$ and $\FS(A)$ is that the former allows repeated summands and the latter does not.  (The notation $\FS$ stands for ``finite sums''.)  The trivial deterministic containment $\FS(A) \subseteq \langle A \rangle$ can be strict.  Nonetheless, we show that the with-high-probability shapes of the gap sets $\mathbb{Z}_{\geq 0}^d \setminus S$ and $\mathbb{Z}_{\geq 0}^d \setminus S^*$ for a $p$-random set $A \subseteq \mathbb{Z}_{\geq 0}^d$ are qualitatively the same.

\begin{theorem}\label{thm:subset-sums}
For each positive integer $d$, there is a constant $C=C_d>0$ such that the following holds.  Let $p>0$, sample a $p$-random subset $A \subseteq \mathbb{Z}_{ \geq 0}^d$, and set $S^*:=\FS(A)$.  Then with high probability (as $p \to 0$), we have
$$
|S^* \cap \mathcal{R}_d(p,C^{-1} p^{-1}(\log p^{-1})^{d+1})|=o(|\mathcal{R}_d(p,C^{-1} p^{-1}(\log p^{-1})^{d+1})|)$$
and
$$\mathbb{Z}_{\geq 0}^d \setminus \mathcal{R}_d(p,C p^{-1}(\log p^{-1})^{d+1}) \subseteq S^*,
$$
and in particular the gap set has size $$|\mathbb{Z}_{\geq 0}^d \setminus S^*| \asymp_d p^{-1}(\log p^{-1})^{2d}.$$
\end{theorem}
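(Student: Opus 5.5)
Since $\FS(A)\subseteq\langle A\rangle$ deterministically, the first (lower-bound) assertion of Theorem~\ref{thm:subset-sums} follows at once from the first assertion of Theorem~\ref{thm:main}. With high probability,
$$|S^*\cap\mathcal{R}_d(p,C^{-1}p^{-1}(\log p^{-1})^{d+1})|\le|S\cap\mathcal{R}_d(p,C^{-1}p^{-1}(\log p^{-1})^{d+1})|=o(|\mathcal{R}_d|).$$
The size estimate for the gap set then follows from the two containments together with a routine lattice-point count of $\mathcal{R}_d(p,Z)$ at $Z\asymp p^{-1}(\log p^{-1})^{d+1}$, which is the same count used for Theorem~\ref{thm:main}. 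So the only real content is the upper bound: every $x\notin\mathcal{R}_d(p,Cp^{-1}(\log p^{-1})^{d+1})$ is a sum of \emph{distinct} elements of $A$.

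For the upper bound, I will rerun the proof of the second assertion of Theorem~\ref{thm:main} and check where repeated summands are used. I expect that argument to express a target $x$ as a sum of a bounded number $k=O_d(1)$ of elements of $A$ (or at most $O(\log p^{-1})$ of them). These elements are found in prescribed boxes or "tubes" determined by $x$, for instance one element in a box near $x$ and the others chosen to correct residues, using the fact that a box of volume $\gg p^{-1}\log p^{-1}$ contains many elements of $A$ with probability $1-p^{\omega(1)}$, followed by a union bound over targets. The modification is to force the chosen summands to be distinct. The plan is to split $A$ into $k$ independent random pieces: each point of $\mathbb{Z}_{\ge0}^d$ is independently assigned a label in $\{1,\dots,k\}$, giving $A=A_1\sqcup\cdots\sqcup A_k$ with each $A_i$ being $p/k$-random. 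The $i$-th summand is drawn only from $A_i$. Summands from different pieces are automatically distinct, and summands from the same piece are handled by requiring them to come from disjoint regions. Replacing $p$ by $p/k$ changes $\log p^{-1}$ and $p^{-1}$ only by constant factors, and these are absorbed into $C_d$.

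The main obstacle is the case where the Theorem~\ref{thm:main} argument genuinely uses multiplicity. This happens when it writes a target as $m\cdot a+(\text{small correction})$ with a single generator $a$ repeated, or when it reaches far-away points via the semigroup closure that the introduction alludes to (adding $\mathcal{R}$-complement points from the coordinate axes). For points far from the origin, the plan is to replace $m\cdot a$ by a sum of $m$ distinct elements $a^{(1)},\dots,a^{(m)}$ of $A$, each lying in a translate of a common small box. The sum then equals $x$ minus an error vector that ranges over a box of size about $m$ times the individual box. That error is absorbed by the final corrections, using a near-target box whose volume is $\gg p^{-1}\log p^{-1}$ so that it is nonempty with probability $1-p^{\omega(1)}$.

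There is one difficulty with targets at unbounded distance: we need, with high probability, all of infinitely many targets. The count of candidate representations grows with $|x|$, so the failure probability for $x$ decays faster than any polynomial in $|x|$, and a union bound over dyadic shells converges. This is the step I would verify most carefully. Near the boundary of $\mathcal{R}_d$ I would rely on the same estimates as in Theorem~\ref{thm:main}, since there the representations already use boundedly many summands from separated regions, and distinctness comes for free.
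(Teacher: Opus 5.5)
Your treatment of the sparsity statement (via $\FS(A)\subseteq\langle A\rangle$ and sparsity of $\langle A\rangle$) and of the gap-set size (via the lattice-point count of $\mathcal{R}_d$) is fine and is what the paper does. The gap is the containment $\mathbb{Z}_{\geq0}^d\setminus\mathcal{R}_d(p,Cp^{-1}(\log p^{-1})^{d+1})\subseteq\FS(A)$. This is the real content of the theorem, and your proposal does not prove it: it defers to an argument for $\langle A\rangle$ whose structure you only guess. In the paper the logic runs the other way. The covering statement is proved directly for $\FS(A)$ (Proposition~\ref{prop:upper-bound-combined}), and the one for $\langle A\rangle$ is deduced from $\FS(A)\subseteq\langle A\rangle$, so there is no repeated-summand argument to patch.

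More seriously, the mechanisms you guess cannot reach the threshold. Fix $k$ and a point $x$ with $\prod_i(x_i+\log p^{-1})\leq 2Cp^{-1}(\log p^{-1})^{d+1}$. The expected number of ways to write $x$ as a sum of $k$ distinct elements of $A$ is at most $p^k\prod_i(x_i+k)^{k-1}\leq p\,(2C(\log p^{-1})^{d+1})^{k-1}\to0$. So points just outside $\mathcal{R}_d$ are typically \emph{not} sums of boundedly many elements of $A$, and a refinement of the same count shows one needs on the order of $\log p^{-1}$ summands. With $k\asymp\log p^{-1}$, however, your distinctness device of splitting $A$ into $k$ independently labelled $(p/k)$-random pieces loses a factor $\log p^{-1}$ in density. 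That factor is not a constant; it moves the threshold by powers of $\log p^{-1}$. The step replacing $m\cdot a$ by $m$ distinct elements in translated boxes, and the union bound over infinitely many targets, are likewise asserted without the estimates that would make them work.

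What is missing is a way to manufacture $\gg p^{-1}\log p^{-1}$ distinct subset sums from only $O(\log p^{-1})$ random elements, and then a way to use them.

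\emph{Seed.} In the paper, for targets $x$ with $x_i\geq Y_i$ and $\prod_iY_i\asymp p^{-1}(\log p^{-1})^{m+1}$, take a seed box with sides $Y_i'\approx Y_i/(D\log p^{-1})$ and volume $Y'\asymp p^{-1}\log p^{-1}$. Lemma~\ref{lem:finite-ab-gp} says a random set of $L\gg\log q$ elements of an abelian group of order $q$ has every element as a subset sum (by a second-moment argument). Applied to $G=\prod_i\mathbb{Z}/Y_i'\mathbb{Z}$, it shows that the subset sums of the $\asymp\log p^{-1}$ elements of $A$ in the seed box contain at least $Y'$ distinct points $z$, all in a box with sides at most $Y_i/2$ (Lemma~\ref{lem:ab-gp-lift}).

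\emph{Complementary elements.} The $\geq Y'$ distinct points $x-z$ lie outside the seed box, so each is independently in $A$ with probability $p$. Hence $x\in\FS(A)$ except with probability $e^{-pY'}$, an arbitrarily large power of $p$, which survives a union bound over $x\in[1,p^{-2}]^m$. Distinctness is automatic because $x-z$ is not in the seed box.

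\emph{Far targets.} Targets beyond $p^{-2}$ are reached by writing $w_i=w_j+(w_i-w_j)$, with $w_j\leq w_i$ coordinatewise and $w_j$ less than half of $w_i$ in the coordinate where $w_i$ is largest. Then $w_i-w_j$ cannot already be a summand of $w_j$. Positive correlation gives failure probability $e^{-p\|w_i\|_\infty/3}$, which is summable.

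\emph{Small coordinates.} Points with some coordinates below $D\log p^{-1}$ are handled by induction on dimension. One adds, to $\FS$ of the part of $A$ in a coordinate subspace, a single element of $A$ carrying the prescribed nonzero small coordinates; it is automatically distinct from all subspace summands.

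Your remark about ``correcting residues'' points toward the first step. Without the finite-group subset-sum lemma and the complementary-element count, though, the upper bound is not established.
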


As in Theorem~\ref{thm:main}, we can obtain an arbitrary power-saving in the little-$o$ by taking $C$ sufficiently large.

\subsection{Structure of the paper}
The bulk of the paper consists of Sections~\ref{sec:lower} and~\ref{sec:upper}.  In the former section we prove that with high probability $S=\langle A \rangle$ is sparse in $$\mathcal{R}_d(p,C^{-1} p^{-1}(\log p^{-1})^{d+1});$$
our argument is an elaboration of the approach in \cite{KMS} with some new geometric ingredients and technical challenges.  In the latter section we prove that with high probability $S^*=\FS(A)$ contains $$\mathbb{Z}_{\geq 0}^d \setminus \mathcal{R}_d(p,C p^{-1}(\log p^{-1})^{d+1});$$
here we diverge more sharply from \cite{KMS} because $\FS(A)$ does not allow repeated summands.  We assemble the pieces to deduce Theorems~\ref{thm:main} and~\ref{thm:subset-sums} in Section~\ref{sec:assembling}.  Section~\ref{sec:concluding} contains a few remarks and open problems. In Appendix~\ref{sec:appendix} we calculate the asymptotic size of $\mathcal{R}_d(p,Z)$.

\subsection{Notation}
Throughout the paper, $0<p<1$ is the density parameter of a random set.  We say that an event occurs \emph{with high probability} if its probability of occurring tends to $1$ as $p \to 0$; when necessary, we will freely assume that $p$ is sufficiently small.

We use the following standard asymptotic notation.  We write $f \ll g$ or $f=O(g)$ or $g=\Omega(f)$ if there is an absolute constant $C>0$ such that $|f| \leq Cg$.  We write $f \asymp g$ if $f \ll g$ and $g \ll f$.  We write $f=o(g)$ if $f/g \to 0$ in the relevant limit (as the density parameter $p$ tends to $0$, unless explicitly noted otherwise).  Parameters subscripted to asymptotic notation indicate that the constant $C$ may depend on these parameters (but no others); for instance, $f \ll_d g$ means that there is a constant $C=C(d)>0$ depending only on $d$ such that $f \leq Cg$.

For integers $M \leq N$, we write $[M,N]:=\{M, M+1, \ldots, N\}$ for the discrete interval between $M$ and $N$.  Logarithms are base-$e$ unless otherwise noted.

\section{Lower bounds}\label{sec:lower}

In this section we prove the ``sparsity'' part of Theorem~\ref{thm:main}.

\begin{proposition}\label{prop:lower}
For each positive integer $d$, there is a constant $c=c_d>0$ such that the following holds.  Let $p>0$, sample a $p$-random subset $A \subseteq \mathbb{Z}_{\geq 0}^d$, and set $S:=\langle A \rangle$.  Then with high probability we have
$$|S \cap \mathcal{R}_d(p,c p^{-1}(\log p^{-1})^{d+1})|=o(|\mathcal{R}_d(p,c p^{-1}(\log p^{-1})^{d+1})|).$$
\end{proposition}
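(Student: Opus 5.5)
Set $L:=\log p^{-1}$, $Z:=c\,p^{-1}L^{d+1}$ and $\mathcal{R}:=\mathcal{R}_d(p,Z)$. The plan is to bound $\mathbb{E}|S\cap\mathcal{R}|$ by $o(|\mathcal{R}|)$ and then apply Markov's inequality. By the Appendix, $|\mathcal{R}|\asymp_d Z(\log(Z/L^d))^{d-1}\asymp_d p^{-1}L^{2d}$ once $c$ is fixed and $p$ is small. As in \cite{KMS}, split according to how many generators from $A$ are used: either a point of $S\cap\mathcal{R}$ lies in a ``first-moment'' set built from few generators, or $A$ contains an unusually large number of generators below a certain box. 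Note that $\mathcal{R}$ is a down-set, so any representation $x=a_1+\cdots+a_k$ of $x\in\mathcal{R}$ uses only $a_i\in\mathcal{R}\setminus\{0\}$, and $a_i\le x$ coordinatewise.

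\textbf{Step 1 (generators in $\mathcal{R}$).} Compute $\mathbb{E}|A\cap\mathcal{R}|=p|\mathcal{R}|\asymp_d cL^{2d}$ up to the $c$-dependence, and more finely $\mathbb{E}|A\cap B(x)|=p\,|\{y\le x\}|=p\prod_i(x_i+1)$. For $x\in\mathcal{R}$ this is at most $p\prod_i(x_i+L)\le cL^{d+1}$. This is where the shift by $L$ enters: it guarantees that even the thin boxes near the coordinate hyperplanes have small expected occupancy.

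\textbf{Step 2 (counting representations).} For fixed $x\in\mathcal{R}$,
\[
\Pr[x\in S]\le\sum_{k\ge1}\mathbb{E}\#\{\{a_1,\dots,a_k\}\subseteq A\cap B(x):\textstyle\sum m_ia_i=x\text{ for some } m_i\ge1\}.
\]
The $k=1$ terms contribute $O(p\cdot\#\text{divisors})$, which is negligible. For general $k$, I would fix $a_1,\dots,a_{k-1}$ and the multiplicities; then $a_k$ is determined, giving $\le p^k\cdot\#\{\text{choices}\}$. The count of multisets of $k$ lattice points in the box $B(x)$ summing to $x$ is about $\prod_i\binom{x_i+k-1}{k-1}/k!$ roughly, so the $k$-th term is at most $p^{k}\prod_i(x_i+k)^{k-1}/(k!)^{d}\cdot O(1)^k$ (up to normalization). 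Using $\prod(x_i+k)\le\prod(x_i+L)(k/L+1)^d$ and $k\le L$, this is $\le (C c L^{d+1})^{k-1} p/ (k!)^d$. That is too crude, because the factor $L^{(d+1)(k-1)}$ beats $(k!)^{d}$ only once $k\gg L^{(d+1)/d}$.

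\textbf{Step 3 (the main obstacle).} I therefore expect the core difficulty to be handling moderate $k$, mirroring \cite{KMS}. First, with high probability the small generators are few: every box $B(x)$ with $x\in\mathcal{R}$ contains at most $K:=C'L^{d+1}c$ points of $A$ (Chernoff plus a union bound over the $\ll p^{-O(1)}$ dyadic boxes). Conditionally, the elements of $S\cap\mathcal{R}$ lying in $B(x)$ are sums $\sum m_ia_i$ with $a_i\in A\cap B(x)$. I would count, for each dyadic box $D=\prod[X_i,2X_i]$ meeting $\mathcal{R}$, the number of such sums landing in $D$. If $D$ sits above $\mathcal{R}$-points with $\prod(X_i+L)\approx W\le Z$, then $|A\cap B|\le K_W\approx pW$, and the number of vectors $(m_a)$ with $\sum m_a a\in D$ is at most the number of lattice points in the simplex-like region $\{m\ge0:\sum m_a a\le 2X\}$. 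Using the separate coordinate constraints, together with the fact that each generator $a$ has some coordinate $a_j\ge1$, this gives a bound like $\binom{K_W+L'}{K_W}$, where $L'$ is roughly $\sum_j X_j/(\text{typical } a_j)$. The goal is to show this count is $\le |D|\,p^{\Omega(1)}$ once $c$ is small, i.e.\ that $\exp(O(pW\log(1/(pW)\cdot\ldots)))\ll \prod X_i$. The calibration $W\le cp^{-1}L^{d+1}$ together with $\prod X_i\gtrsim W/L^{?}$ should make $\log(\#\text{sums})\approx c\,L^{d+1}\cdot(\text{log factor})$ smaller than $\log|D|$ only if the logs align. Getting this log-accounting right, especially near coordinate hyperplanes where some $X_i\ll L$, is exactly the step I expect to be delicate. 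To handle it, I would reduce via projection to a lower-dimensional face and induct on $d$. Summing over dyadic $D$ (there are $O(L^d)$ of them) then gives $|S\cap\mathcal{R}|\le p^{\Omega(1)}|\mathcal{R}|$ with high probability, and the power saving improves as $c$ decreases.
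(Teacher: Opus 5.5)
There is a genuine gap, located at the step you flag as delicate in Step~3: the counting bound you propose there cannot succeed for any fixed $c$. You are right to abandon the first-moment count of representations in Step~2, since it is dominated by rare configurations with many small generators, and to condition on a typical $A$ instead. After conditioning, everything reduces to bounding the number of multiplicity vectors $(m_a)$ whose sums $\sum_a m_a a$ land in a given region. You need this to be at most $p^{-\varepsilon}=e^{\varepsilon L}$ for some $\varepsilon<1$. This should be an absolute bound per region, which suffices because there are only polylogarithmically many regions and $|\mathcal{R}|\geq p^{-1}$. The relative target $|D|p^{\Omega(1)}$ fails for the box containing $0\in S$.

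A bound of the form $\binom{K_W+L'}{K_W}$, obtained by containment in $\{\sum_a m_a\leq L'\}$, treats the $K_W\asymp cL^{d+1}$ generators as interchangeable. But after normalizing by the box, the smallest generator has size about $K_W^{-1/d}$ and can be repeated about $K_W^{1/d}$ times. So any valid $L'$ is $\gtrsim K_W^{1/d}$, and then $\log\binom{K_W+L'}{K_W}\gg_c L^{1+1/d}$, which dwarfs $L$. Your own heuristic $\log(\#\text{sums})\approx cL^{d+1}\cdot(\ldots)$ shows the same mismatch: no choice of $c$ makes it smaller than $\log|D|=O(L)$. So the issue is not log-bookkeeping near the coordinate hyperplanes; the estimate is off by a power of $L$ in the exponent everywhere.

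What is missing is a weighted, partition-type count that uses the size distribution of the generators. The paper fixes $\lambda=(1/x_1,\ldots,1/x_d)$ and works with tetrahedra $\{\lambda\cdot y\leq 1\}$. Using convexity of the complement of $\mathcal{R}$, it covers $\mathcal{R}$ by $O_d(L^d)$ such tetrahedra with all $x_i\geq L$ and $\prod x_i\leq cp^{-1}L^{d+1}$. Suppose $\#\{a\in A_0:\lambda\cdot a\leq h\}\leq\gamma h^d$ for all $h$. Then the $i$-th smallest generator has $\lambda\cdot a_i\geq(i/\gamma)^{1/d}$. Hence elements of $\langle A_0\rangle$ under the hyperplane are dominated by partitions of integers up to $2\gamma^{1/d}$ in which parts of size $k$ come in at most $k^{d-1}$ colours. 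Meinardus's theorem bounds this count by $\exp(O_d(\gamma^{1/(d+1)}))$; you can also get this directly from the bound $e^{t}\prod_a(1-e^{-t\,\lambda\cdot a})^{-1}$ with $t\asymp\gamma^{1/(d+1)}$. With $\gamma\asymp pZ\asymp cL^{d+1}$ the count is $p^{-O_d(c^{1/(d+1)})}$, and this is exactly where the exponent $d+1$ in the threshold comes from.

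Two further ingredients are absent from your sketch.

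\begin{itemize}
\item Chernoff cannot secure the growth hypothesis at tiny scales. The paper therefore shows there are no generators with $\lambda\cdot a\leq h_1=\exp(-\Theta(\sqrt{L}))$. It handles the $O(\sqrt{L})$ generators between $h_1$ and $h_2\asymp_\varepsilon L^{-1-1/(2d)}$ by the trivial bound $(h_1^{-1}+1)^{|A_2|}\leq p^{-\varepsilon/2}$.
\item Generators with zero coordinates are handled by splitting $A$ according to support. Using $x_i\geq L$ for the vanishing coordinates reduces to the lower-dimensional case with the correct product bound. This is close to your projection idea, but it only works once the core counting estimate is in place.
\end{itemize}
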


We start with the observation that the continuous region
$$\{(x_1, \ldots, x_d) \in \mathbb{R}_{\geq 0}^d: (x_1+\log p^{-1}) \cdots (x_d+\log p^{-1})\geq Z\},$$
corresponding to the complement of $\mathcal{R}_d(p,Z)$ in the positive quadrant, is convex.  Every closed convex subset of $\mathbb{R}^d$ is the intersection of its supporting half-spaces and thus can be well approximated by the intersection of a fine finite net of supporting half-spaces.  In our context, this means that $\mathcal{R}_d(p,Z)$ can be well approximated by the union of the regions under a fine net of tangent hyperplanes (or, more precisely, low-lying secant hyperplanes) to the surface $$\{(x_1, \ldots, x_d) \in \mathbb{R}_{\geq 0}^d: (x_1+\log p^{-1}) \cdots (x_d+\log p^{-1})=Z\}.$$
%See \cref{fig:tangent-approximation} for an illustration in $2$ dimensions.

We will use well-known asymptotics for the number of ``colored'' integer partitions to show that with very high probability, $S$ contains few elements below each supporting hyperplane in our net; we will then deduce Proposition~\ref{prop:lower} by union-bounding over the hyperplanes.  More precisely, given a vector $\lambda \in \mathbb{R}_{>0}^d$, we will consider the lattice points whose dot products with $\lambda$ lie below a suitable threshold, and we will show that few such points lie in $S$.  We will see that our approach weakens when $\lambda$ has coordinates very close to $0$; this gives one explanation for the appearance of the quantities $x_i+\log p^{-1}$ (rather than just $x_i$) in the definition of $\mathcal{R}_d(p,Z)$.

\subsection{Asymptotics for colored integer partitions}
In the treatment~\cite{KMS} of the $1$-dimensional case, an important ingredient was the classical integer partition asymptotic due to Hardy and Ramanjuan~\cite{HR} and to Uspensky~\cite{Usp}.  For our multidimensional setting, we will use a more general statement about ``colored'' integer partitions due to Meinardus~\cite{Mei}.

For a sequence $\bu=(u_1, u_2, \ldots)$ of nonnegative integer weights, let $\ptn_{\bu}(n)$ denote the number of integer partitions of $n$ where the parts of size $k$ come in $u_k$ different ``colors''.  The case $\bu=(1,1,\ldots)$ corresponds to the usual integer partition function.  The numbers $\ptn_{\bu}(n)$ are characterized by the generating function
$$\sum_{n=0}^\infty \ptn_{\bu}(n) z^n=\prod_{k=1}^\infty \frac{1}{(1-z^k)^{u_k}}.$$
Meinardus~\cite{Mei} has used the saddle-point method to obtain asymptotics for $\ptn_{\bu}(n)$ when the weights $\bu$ grow as $u_k=k^{O(1)}$; see~\cite[Chapter 6]{And} for an exposition and further discussion.

We are concerned with the special case where $u_k=k^{d-1}$ for $d$ a positive integer; for brevity, write $\ptn_d(n):=\ptn_{(1^{d-1}, 2^{d-1}, \ldots)}(n)$.  Here, Meinardus's main result~\cite[Satz 1]{Mei} implies the asymptotic
\begin{equation}\label{eq:ptn}
\ptn_d(n)=\exp((1+o_d(1))\kappa_d n^{d/(d+1)})
\end{equation}
for an explicit constant $\kappa_d>0$.  Meinardus in fact provides a more precise asymptotic for $\ptn_d(n)$ (up to a multiplicative factor of $1+n^{-\Omega_d(1)}$), but all that matters for us is the value of the exponent $d/(d+1)$ in \eqref{eq:ptn}.  We deploy Meinardus's result in the form of the following lemma.  %For our applications it matters only that we obtain a bound of the shape $\exp(O_d((\gamma^{1/d}Y)^{d/(d+1)}))$.

\begin{lemma}\label{lem:ptn}
Let $d$ be a positive integer.  Let $\gamma,Y>0$, and let $\lambda \in \mathbb{R}_{>0}^d$ be any positive vector.  Suppose that $A_0\subseteq \mathbb{Z}_{\geq 0}^d$ satisfies
$$|\{x \in A_0: x \cdot \lambda \leq y\}| \leq \gamma y^{d}$$
for all real numbers $y \geq 0$.  Then
$$|\langle A_0 \rangle \cap \{x \in \mathbb{Z}_{\geq 0}^d: x \cdot \lambda \leq Y\}|\leq \exp((1+o_d(1))\kappa_d (2\gamma^{1/d}Y)^{d/(d+1)})$$
in the regime $\gamma^{1/d}Y \to \infty$, where $\kappa_d$ is the constant from \eqref{eq:ptn}.
\end{lemma}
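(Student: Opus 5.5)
The plan is to reduce to colored partitions by an injective ``rounding'' of the elements of $A_0$ into the part-slots counted by $\ptn_d$. Every point of $\langle A_0\rangle\cap\{x\cdot\lambda\le Y\}$ is a sum of some finite multiset of elements of $A_0$ whose total $\lambda$-weight is at most $Y$. So it suffices to bound the number of such multisets. We encode each multiset as a colored integer partition in which every part has $k^{d-1}$ colors when its size is $k$.

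Fix the scale $t:=2\gamma^{1/d}$. To each $a\in A_0$ we assign a \emph{slot} $(k(a),c(a))$, where $k(a)$ is a positive integer size and $c(a)\in[1,k(a)^{d-1}]$ is a color. We want the assignment to be injective and to satisfy $k(a)\le t\,(a\cdot\lambda)$.

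We build it greedily. List $A_0$ in nondecreasing order of $a\cdot\lambda$, and give each element the smallest still-free slot, with slots ordered by size. This succeeds provided that for every integer $k\ge1$,
$$\#\{a\in A_0: a\cdot\lambda<(k+1)/t\}\le\sum_{j\le k} j^{d-1}.$$
We also need that no element has weight below $1/t$, since slots have size at least $1$.

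By hypothesis the left-hand side is at most $\gamma((k+1)/t)^d=((k+1)/2)^d$. Elements with $a\cdot\lambda<1/t$ number at most $\gamma t^{-d}=2^{-d}<1$, so there are none. The right-hand side is at least $\int_0^k x^{d-1}\,dx=k^d/d$. Comparing $((k+1)/2)^d$ with $\sum_{j\le k}j^{d-1}$ is the step I expect to be the main obstacle. The inequality is clear for large $k$ and for $d\le2$, but small $k$ in higher dimension needs care.

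If the comparison fails for small $k$, I would try two fixes. The first is to compare directly with the exact partial sums $\sum_{j\le k}j^{d-1}$ rather than the integral bound $k^d/d$. The second is to shift the rounding slightly, sending weights in $[k/t,(k+1)/t)$ to size at most $k$. Any constant-factor loss here enters only through $t$. That changes the constant in front of $\gamma^{1/d}Y$, but it should be absorbable, possibly by adjusting $t$ up to a factor $(1+o(1))$, because the exponent only needs to be $\kappa_d(2\gamma^{1/d}Y)^{d/(d+1)}$ up to the $(1+o_d(1))$.

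Given the injection, a multiset $\{a_1,\dots,a_m\}$ of total weight at most $Y$ maps to the colored partition with parts $k(a_i)$ in colors $c(a_i)$. Its total size is at most $tY$, and by injectivity of slots, distinct multisets give distinct colored partitions. Hence the count is at most
$$\sum_{n\le tY}\ptn_d(n)\le (tY+1)\max_{n\le tY}\ptn_d(n).$$
By \eqref{eq:ptn}, and since $\ptn_d$ is nondecreasing (adding a part of size $1$ is injective), this is
$$(tY+1)\exp\bigl((1+o_d(1))\kappa_d(tY)^{d/(d+1)}\bigr).$$
The prefactor $tY+1$ is $\exp(O(\log(\gamma^{1/d}Y)))$, which is negligible against $(\gamma^{1/d}Y)^{d/(d+1)}$ as $\gamma^{1/d}Y\to\infty$. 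This gives the claimed bound $\exp((1+o_d(1))\kappa_d(2\gamma^{1/d}Y)^{d/(d+1)})$.
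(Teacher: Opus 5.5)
Your argument is essentially the paper's proof: order $A_0$ by $\lambda$-weight, inject the elements into colored part-slots of size at most $2\gamma^{1/d}(a\cdot\lambda)$, and count colored partitions of total size at most $2\gamma^{1/d}Y$ using \eqref{eq:ptn}. The paper uses an explicit assignment instead of a greedy one. It sends the $i$-th element to a part of size $\lfloor 2i^{1/d}\rfloor$, which is at most $2\gamma^{1/d}(a_i\cdot\lambda)$ because $a_i\cdot\lambda\ge(i/\gamma)^{1/d}$. Your greedy assignment never produces larger slot sizes than this, so it succeeds whenever the paper's does.

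The step you flag as the main obstacle is not actually one. The inequality $((k+1)/2)^d\le\sum_{j\le k}j^{d-1}$ holds for all $k,d\ge 1$, as a telescoping argument shows. Write
$((k+1)/2)^d=2^{-d}+\sum_{j=1}^{k}2^{-d}\bigl((j+1)^d-j^d\bigr)$. For $j\ge1$ we have $(j+1)^d-j^d=\sum_{i<d}\binom{d}{i}j^i\le(2^d-1)j^{d-1}$. Hence $((k+1)/2)^d\le 2^{-d}+(1-2^{-d})\sum_{j\le k}j^{d-1}\le\sum_{j\le k}j^{d-1}$. This is exactly the paper's observation that the interval $[2^{-d}k^d,2^{-d}(k+1)^d)$ has length less than $k^{d-1}$. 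Your integral lower bound $k^d/d$ is simply too lossy at small $k$; at $k=1$ it gives $1/d<1$.

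You should also drop your fallback remark. Replacing $t$ by $ct$ for a fixed constant $c>1$ multiplies the exponent by $c^{d/(d+1)}$. That factor cannot be absorbed into the $(1+o_d(1))$, so the fallback would not prove the lemma with the stated constant. It would still give the cruder bound $\exp(O_d(\cdot))$ that is all the later application needs. With the telescoping inequality in place, your proof is complete.
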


\begin{proof}
Order the elements of $A_0$ as $a_1, a_2, \ldots$ such that the dot products $a_i \cdot \lambda$ are non-decreasing as $i$ increases (breaking ties arbitrarily).  For each $i$, our assumption with $y=a_i \cdot \lambda$ reads
$$i \leq |\{x \in A_0: x \cdot \lambda \leq a_i \cdot \lambda\}| \leq \gamma (a_i \cdot \lambda)^{d},$$
whence we have
$$a_i \cdot \lambda \geq (\gamma^{-1}i)^{1/d}.$$
Now, each element of $\langle A_0 \rangle$ can be expressed (possibly non-uniquely) as
$$x=b_1a_1+b_2a_2+\cdots$$
where $(b_1, b_2, \ldots)$ is a tuple of nonnegative integers (with only finitely many nonzero entries).  The dot product of such an $x$ with $\lambda$ is
$$x \cdot \lambda=\sum_i b_i(a_i \cdot \lambda) \geq \sum_i b_i (\gamma^{-1}i)^{1/d},$$
so $|\langle A_0 \rangle \cap \{x \in \mathbb{Z}_{\geq 0}^d: x \cdot \lambda \leq Y\}|$ is upper-bounded by the number of tuples $(b_1, b_2, \ldots)$ satisfying
$$\sum_i b_i (\gamma^{-1}i)^{1/d} \leq Y.$$
Rearranging, we see that any tuple satisfying this condition also satisfies
$$\sum_i b_i \lfloor 2i^{1/d} \rfloor  \leq 2\gamma^{1/d}Y.$$
For each positive integer $k$, we have $\lfloor 2i^{1/d} \rfloor=k$ if and only if $2^{-d}k^d \leq i <2^{-d}(k+1)^d$; the length of this interval is trivially at most $k^{d-1}$, so the number of $i$'s with $\lfloor 2i^{1/d} \rfloor=k$ is at most $k^{d-1}$.

Thus each tuple $(b_1, b_2, \ldots)$ can be interpreted as an integer partition of some $n \leq 2\gamma^{1/d}Y$, where the parts of size $k$ come in at most $k^{d-1}$ colors.  By Meinardus's theorem \eqref{eq:ptn}, the total number of possible tuples is at most
$$\sum_{n=0}^{2\gamma^{1/d}Y}\ptn_d(n)=\sum_{n=0}^{2\gamma^{1/d}Y}\exp((1+o_d(1))\kappa_d n^{d/(d+1)})=\exp((1+o_d(1))\kappa_d (2\gamma^{1/d}Y)^{d/(d+1)}).$$
This completes the proof of the lemma.
\end{proof}

\subsection{Sparseness under a hyperplane}

Recall that our strategy for Proposition~\ref{prop:lower} involves estimating the region $\mathcal{R}_d(p,Z)$ by the union of the tetrahedral regions under several hyperplanes and then showing that with very high probability $S=\langle A \rangle$ is sparse in each such tetrahedral region.  It will be convenient to partition $A$ according to which coordinates vanish and then to bound the contribution from each part separately.  The following technical result underpins our approach.

\begin{proposition}\label{prop:region-under-hyperplane}
For each positive integer $d$ and every real $\varepsilon>0$, there is a constant $c=c_d(\varepsilon)>0$ such that the following holds.  Let $p>0$, and suppose that $x_1, \ldots, x_d>0$ satisfy $$x_1 \cdots x_d \leq c p^{-1}(\log p^{-1})^{d+1}.$$
Sample a $p$-random subset $A \subseteq \mathbb{Z}_{>0}^d$, and set $S:=\langle A \rangle$.  Then with probability at least $$1-e^{-c(\log p^{-1})^{1/2}},$$
we have
$$\left|S \cap \left\{(y_1, \ldots, y_d) \in \mathbb{Z}_{\geq 0}^d: \frac{y_1}{x_1}+\cdots+\frac{y_d}{x_d} \leq 1\right\}\right| \leq p^{-\varepsilon}.$$
\end{proposition}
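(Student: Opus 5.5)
Set $\lambda:=(1/x_1,\ldots,x_d^{-1})$, so the target region is $T:=\{y\in\mathbb{Z}_{\geq 0}^d: y\cdot\lambda\leq 1\}$. Write $L:=\log p^{-1}$ and $V:=x_1\cdots x_d\leq cp^{-1}L^{d+1}$. The plan is to split $A\cap T$ into ``small'' and ``large'' generators and to count the sums each part can produce inside $T$. We pick a threshold $\theta\in(0,1)$, to be chosen later. Let
$$A_0:=\{a\in A: a\cdot\lambda\leq\theta\},\qquad A_1:=\{a\in A: \theta<a\cdot\lambda\leq 1\}.$$
Every element of $S\cap T$ is a sum $s_0+s_1$ with $s_0\in\langle A_0\rangle$ and $s_1\in\langle A_1\rangle$, both in $T$. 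In $s_1$ each summand contributes more than $\theta$ to $y\cdot\lambda$, so $s_1$ is a sum of fewer than $1/\theta$ elements of $A_1$. Hence
$$|S\cap T|\leq |\langle A_0\rangle\cap T|\cdot (|A_1|+1)^{1/\theta}.$$

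The first step is to control the counts of $A$ in dilates of the simplex. The number of lattice points of $\mathbb{Z}_{>0}^d$ with $y\cdot\lambda\leq t$ is at most $t^dV/d!$, since $A\subseteq\mathbb{Z}_{>0}^d$ and each such point owns a unit cube inside the simplex of volume $t^dV/d!$. Its expectation under $p$-sampling is therefore at most $pVt^d/d!\leq cL^{d+1}t^d$.

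We need a bound of the shape $|\{a\in A: a\cdot\lambda\leq t\}|\leq\gamma t^d$ for all $t\leq\theta$, so that Lemma~\ref{lem:ptn} applies to $A_0$ (for $t>\theta$ the count is constant, so we need a bit of slack there). We take $\gamma:=C'cL^{d+1}$ plus a term absorbing small $t$. The issue is that when $t^dpV$ is small, Chernoff gives only weak concentration. I would handle this by a dyadic union bound over $t=2^{-j}$. For scales where the mean $\mu_t$ is at least $L^{1/2}$, Chernoff gives failure probability $e^{-\Omega(\mu_t)}$. For smaller scales we use the crude bound $\Pr[\mathrm{Bin}\geq k]\leq\mu^k/k!$ with $k$ of order $L^{1/2}$.

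The honest way to handle small $t$ is to let $\gamma$ absorb them: demand $\gamma t^d\geq L^{1/2}$ whenever the count could be nonzero. Below the scale where $\mu_t\leq p^{\varepsilon/2}$, the count is zero with probability $1-O(\mu_t)$. Summing these failure probabilities is where the $e^{-c L^{1/2}}$ probability bound comes from, and I expect this bookkeeping to be the main obstacle. The trouble is a single very small generator, for example one with $a\cdot\lambda$ tiny, whose multiples flood $T$ with about $1/(a\cdot\lambda)$ points. We must exclude it with the right probability.

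The bound to aim for is this. With the stated probability, every $a\in A$ has $a\cdot\lambda\geq t_0$, where $pVt_0^d\approx e^{-cL^{1/2}}$. Then $1/t_0\leq (pV)^{1/d}e^{cL^{1/2}/d}=p^{-o(1)}$, which is acceptable.

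Granting $\gamma\approx cL^{d+1}$ (up to lower-order factors), Lemma~\ref{lem:ptn} with $Y=1$ gives
$$|\langle A_0\rangle\cap T|\leq\exp\bigl(O_d((cL^{d+1})^{1/(d+1)})\bigr)=\exp(O_d(c^{1/(d+1)}L)).$$
This is at most $p^{-\varepsilon/2}$ once $c$ is small in terms of $\varepsilon$. This is exactly why the exponent $d+1$ appears in the hypothesis. For $A_1$, with high probability $|A_1|\leq O(cL^{d+1})+L$, so $(|A_1|+1)^{1/\theta}=\exp(O(\theta^{-1}\log L))$. Choosing $\theta$ a fixed small constant, or even $\theta=L^{-1/2}$, makes this $p^{-o(1)}$. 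Multiplying the two bounds gives $|S\cap T|\leq p^{-\varepsilon}$.

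Actually $A_1$ can be merged into $A_0$ by taking $\theta=1$ and applying the lemma once. The split is only needed if the slack in $\gamma$ at larger $t$ causes trouble, and I would first try the single application.
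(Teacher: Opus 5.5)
There is a genuine gap at the step ``Granting $\gamma\approx cL^{d+1}$ (up to lower-order factors)''. The hypothesis of Lemma~\ref{lem:ptn} is uniform in $y$, so a single generator $a\in A_0$ with $a\cdot\lambda=s$ already forces $\gamma\geq s^{-d}$.

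Take the extremal case $pV=cL^{d+1}$ with all $x_i$ equal, so that lattice counts in the relevant small simplices match their volumes. The number of $a\in A$ with $a\cdot\lambda\leq s$ then has mean about $pVs^d/d!$. Consequently:
\begin{itemize}
\item with $\gamma=O(cL^{d+1})$, the hypothesis fails with probability bounded below by a positive constant;
\item to make it hold with failure probability $e^{-cL^{1/2}}$, you need $\gamma\gtrsim_d pV e^{cL^{1/2}}$, which is exactly what your prescription $\gamma t_0^d\geq L^{1/2}$ produces.
\end{itemize}
The factor $e^{cL^{1/2}}$ is $p^{-o(1)}$, but it is not lower order. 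It enters the lemma's conclusion through $\exp(\Theta_d(\gamma^{1/(d+1)}))$, which becomes $\exp(L\,e^{\Omega(L^{1/2})})$, vastly larger than $p^{-\varepsilon}$. Even an inflation $\gamma=K\,cL^{d+1}$ with $K\to\infty$ slowly yields only $p^{-\Theta((Kc)^{1/(d+1)})}$, so this route does not even give success probability $1-o(1)$.

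Hence neither your split nor the merged version with $\theta=1$ can work. Your split sends every generator with $a\cdot\lambda\leq\theta$, including the problematic ones, into the lemma. Your multiset bound $(|A_1|+1)^{1/\theta}$ for the large generators is correct, but it does not touch the difficulty, which lives entirely at small $a\cdot\lambda$.

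The missing idea is to keep the small generators out of Lemma~\ref{lem:ptn} altogether and count their contribution through bounded multiplicities. This is the reverse of your split. The paper takes thresholds $h_1<h_2$ with $pVh_2^d\asymp c\,\varepsilon^{1/2}L^{1/2}$ and $h_1=\exp(-\Theta_d(\varepsilon^{1/2}c^{-1}L^{1/2}))$, and argues in three ranges:
\begin{itemize}
\item By Markov, no generator has $a\cdot\lambda\leq h_1$; the failure probability is $pVh_1^d=e^{-\Omega(L^{1/2})}$.
\item By Chernoff, only $O_d(c\,\varepsilon^{1/2}L^{1/2})$ generators satisfy $h_1<a\cdot\lambda\leq h_2$. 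Each can appear at most $1/h_1$ times in an element of $T$, so together they contribute a factor at most $(h_1^{-1}+1)^{O_d(c\,\varepsilon^{1/2}L^{1/2})}\leq p^{-\varepsilon/2}$.
\item Lemma~\ref{lem:ptn} is applied only to generators with $a\cdot\lambda>h_2$. There the hypothesis is vacuous for $y\leq h_2$ and follows from dyadic Chernoff bounds for $y>h_2$, since every relevant mean is $\gtrsim L^{1/2}$. This gives $\gamma\asymp_d pV$ and the bound $\exp(O_d(c^{1/(d+1)}L))$.
\end{itemize}
Your $t_0$ and the observation $1/t_0=p^{-o(1)}$ are the right raw ingredients. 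With your $t_0$, a factor $(1/t_0+1)^{O(L^{1/2})}=p^{-O(c)-o(1)}$ is affordable. But this must be used as a separate multiplicative bound for the few generators below the scale where the mean count reaches $L^{1/2}$, not folded into $\gamma$.
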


\begin{proof}
We start with some setup.  By possibly increasing some of the $x_i$'s, we may assume that their product $x_1 \cdots x_d$ is equal to exactly $$Z:=c p^{-1}(\log p^{-1})^{d+1},$$
where $c>0$ is a constant to be determined later.
For each real $h \geq 0$, define the sets
$$T^*(h):=\left\{(y_1, \ldots, y_d) \in \mathbb{Z}_{\geq 0}^d: \frac{y_1}{x_1}+\cdots+\frac{y_d}{x_d} \leq h\right\}$$
and
$$T(h):=\left\{(y_1, \ldots, y_d) \in \mathbb{Z}_{>0}^d: \frac{y_1}{x_1}+\cdots+\frac{y_d}{x_d} \leq h\right\};$$
these differ in that the former allows zero coordinates but the latter does not.  Our goal is to show that $|S \cap T^*(1)|$ is very small with very high probability.  Notice that $A \cap T^*(h)=A \cap T(h)$ due our assumption $A \subseteq \mathbb{Z}_{>0}^d$.  If we identify each point $(y_1, \ldots, y_d) \in T(h)$ with the unit cube $$\{(z_1, \ldots, z_d) \in \mathbb{R}^d: \lceil z_i \rceil=y_i ~\forall i\},$$
then these cubes are disjoint and their union is contained in the (continuous) tetrahedron
$$\widetilde T(h):=\left\{(y_1, \ldots, y_d) \in \mathbb{R}_{>0}^d: \frac{y_1}{x_1}+\cdots+\frac{y_d}{x_d} \leq h\right\}.$$
Thus we obtain the upper bound
$$|T(h)| \leq \Vol(\widetilde T(h))=\frac{(x_1 \cdots x_d) h^d}{d!}=\frac{Zh^d}{d!};$$
we emphasize that such an estimate is not available for $T^*(h)$.

Notice that the elements of $A \setminus T(1)$ do not contribute to $S \cap T^*(1)$.  We partition $$A \cap T(1)=A_1 \sqcup A_2 \sqcup A_3$$ into three parts by defining
$$A_1:=A \cap T(h_1), \quad A_2:=A \cap (T(h_2) \setminus T(h_1)), \quad \text{and} \quad A_3:=A \cap (T(1) \setminus T(h_2)),$$
for some parameters $0<h_1<h_2\leq 1$ to be specified later.  We will show that with very high probability, $A_1$  is empty and each of $A_2,A_3$ contributes few (at most $p^{-\varepsilon/2}$) elements of $\langle A \rangle \cap T^*(1)$.

First, since $\mathbb{E} [|A_1|] \leq p|T(h_1)| \leq pZh_1^d/d!$, Markov's inequality gives
\begin{equation*}\label{eq:A_2}
\mathbb{P}[A_1=\emptyset] \geq 1-\frac{pZh_1^d}{d!}.
\end{equation*}

Second, since $|T(h_2) \setminus T(h_1)| \leq |T(h_2)| \leq Zh_2^d/d!$, a Chernoff bound gives
$$\mathbb{P}\left[|A_2| \geq \frac{2pZh_2^d}{d!}\right] \leq \exp\left(-\frac{pZh_2^d}{3\cdot d!}\right).$$
Combining this observation with the trivial estimate
$$|\langle A_2 \rangle \cap T^*(1)| \leq (h_1^{-1}+1){|A_2|}$$
(note that in any element of $\langle A_2 \rangle \cap T^*(1)$, each element of $A_2$ appears as a summand with multiplicity at most $h_1$), we find that
\begin{equation*}\label{eq:A_1}
\mathbb{P}\left[ |\langle A_2 \rangle \cap T^*(1)|\leq (h_1^{-1}+1)^{2pZh_2^d/d!} \right] \geq 1-\exp\left(-\frac{pZh_2^d}{3\cdot d!}\right).
\end{equation*}

Third, to control $|\langle A_3 \rangle \cap T^*(1)|$, we wish to use Lemma~\ref{lem:ptn} with $\lambda:=(1/x_1, \ldots, 1/x_d)$ and $Y:=1$ and a suitable $\gamma>0$ (tending to infinity as $p \to 0$).  The hypothesis of that lemma is that $$|A_3 \cap T^*(h)|=|A_3 \cap T(h)| \leq \gamma h^d$$ for all $h \geq 0$ (here where we make critical use of the assumption that $A \subseteq \mathbb{Z}_{>0}^d$).  The definition of $A_3$ ensures that this is unconditionally the case in the range $h \leq h_2$.  For the range $h>h_2$, it suffices to check that with very high probability
$$|A_3 \cap T(2^{\ell+1} h_2)| \leq \gamma (2^\ell h_2)^d$$
for all $\ell \in \mathbb{Z}_{\geq 0}$.  For each such $\ell$, a Chernoff bound gives
$$\mathbb{P}\left[|A_3 \cap T(2^{\ell+1}h_2)| \geq \frac{2pZ(2^{\ell+1}h_2)^d}{d!}\right] \leq \exp\left(-\frac{pZ(2^{\ell+1}h_2)^d}{3\cdot d!}\right).$$
A union bound over $\ell$ tells us that with probability at least
$$1-\sum_{\ell=0}^\infty \exp\left(-\frac{pZ(2^{\ell+1}h_2)^d}{3\cdot d!}\right),$$
we have
$$|A_3 \cap T(2^{\ell+1}h_2)| \leq \frac{2pZ(2^{\ell+1}h_2)^d}{d!}$$
simultaneously for all $\ell \in \mathbb{Z}_{\geq 0}$.  In such an outcome, applying Lemma~\ref{lem:ptn} with
$$\gamma:=\frac{4pZ}{d!}$$
and $Y:=1$ gives
$$|\langle A_3 \rangle \cap T^*(1)| \leq \exp((1+o_d(1))\kappa_d (2\gamma^{1/d})^{d/(d+1)})=\exp(O_d((pZ)^{1/(d+1)})).$$

It remains to assemble the pieces.  Notice that $|\langle A \rangle \cap T^*(1)| \leq \prod_{i=1}^3 |\langle A_i \rangle \cap T^*(1)|$.  Thus the previous three paragraphs together give (via a union bound) that with probability at least
\begin{equation}\label{eq:under-hyperplane-prob}
1-\frac{pZh_1^d}{d!}-\exp\left(-\frac{pZh_2^d}{3\cdot d!}\right)-\sum_{\ell=0}^\infty \exp\left(-\frac{pZ(2^{\ell+1}h_2)^d}{3\cdot d!}\right),
\end{equation}
we have
\begin{equation}\label{eq:under-hyperplane-size}
|\langle A \rangle \cap T^*(1)| \leq (h_1^{-1}+1)^{2pZh_2^d/d!} \cdot \exp(O_d((pZ)^{1/(d+1)})).
\end{equation}
We now specify our parameters.  Take $c>0$ small enough that the argument $O_d((pZ)^{1/(d+1)})$ of the preceding exponential is at most $(\varepsilon/2)\log p^{-1}$.  Then set
$$h_1:=\exp\left(-\frac{\varepsilon^{1/2} \cdot d!}{5c}  (\log p^{-1})^{1/2} \right) \quad \text{and} \quad h_2:=\left(\frac{\varepsilon}{(\log p^{-1})^{2d+1}} \right)^{1/2d}.$$
With this choice, each of the first two subtracted terms in \eqref{eq:under-hyperplane-prob} is safely
$$\exp(-\Omega_{d,\varepsilon}((\log p^{-1})^{1/2}));$$
as long as $p$ is sufficiently small (relative to $c$), the sum in \eqref{eq:under-hyperplane-prob} is dominated by the $\ell=0$ contribution, which is likewise $\exp(-\Omega_{d,\varepsilon}((\log p^{-1})^{1/2}))$.  Finally, for \eqref{eq:under-hyperplane-size}, each term on the right-hand side is at most $p^{\varepsilon/2}$.  This concludes the proof.
\end{proof}

We now incorporate the contributions from the parts of $A$ with some coordinates vanishing.  It is here that the condition $x_i \gg \log p^{-1}$ comes into the picture.

\begin{proposition}\label{prop:region-under-hyperplane-with-zeros}
For each positive integer $d$ and every real $\varepsilon>0$, there is a constant $c=c_d(\varepsilon)>0$ such that the following holds.  Let $p>0$, and suppose that $x_1, \ldots, x_d\geq \log p^{-1}$ satisfy $$x_1 \cdots x_d \leq c p^{-1}(\log p^{-1})^{d+1}.$$
Sample a $p$-random subset $A \subseteq \mathbb{Z}_{\geq 0}^d$, and set $S:=\langle A \rangle$.  Then with probability at least $$1-e^{-c(\log p^{-1})^{1/2}},$$
we have
$$\left|S \cap \left\{(y_1, \ldots, y_d) \in \mathbb{Z}_{\geq 0}^d: \frac{y_1}{x_1}+\cdots+\frac{y_d}{x_d} \leq 1\right\}\right| \leq p^{-\varepsilon}.$$
\end{proposition}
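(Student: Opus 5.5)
We decompose $A$ according to its support: for each nonempty $I \subseteq [d]$, let $A_I := \{a \in A : a_i > 0 \iff i \in I\}$, so that $A = \bigsqcup_{I} A_I$ (ignoring $0$, which contributes nothing). Each $A_I$ is a $p$-random subset of the positive orthant $\mathbb{Z}_{>0}^{I}$ of the coordinate subspace spanned by $\{e_i : i \in I\}$. Since $\langle A \rangle = \sum_I \langle A_I \rangle$ and every summand of an element of $S \cap T^*(1)$ also lies in $T^*(1)$ (all coordinates are nonnegative), we have
$$|S \cap T^*(1)| \leq \prod_{I \neq \emptyset} |\langle A_I \rangle \cap T^*(1)|.$$
Here $\langle A_I \rangle \cap T^*(1)$ is exactly the intersection of $\langle A_I\rangle$ with the $|I|$-dimensional simplex $\{y \in \mathbb{Z}_{\geq 0}^I : \sum_{i \in I} y_i/x_i \leq 1\}$. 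It therefore suffices to show that, for each of the $2^d-1$ sets $I$, with probability at least $1-e^{-c'(\log p^{-1})^{1/2}}$ we have $|\langle A_I\rangle \cap T^*(1)| \leq p^{-\varepsilon/2^d}$. A union bound then finishes the proof, after we shrink $c$ to absorb the factor $2^d$ in the probability.

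For fixed $I$ with $|I| = k$, we apply Proposition~\ref{prop:region-under-hyperplane} in dimension $k$ with $\varepsilon' := \varepsilon/2^d$ to the parameters $(x_i)_{i \in I}$. We need to verify its hypothesis
$$\prod_{i \in I} x_i \leq c_k(\varepsilon')\, p^{-1}(\log p^{-1})^{k+1}.$$
This is where the assumption $x_i \geq \log p^{-1}$ is used. Since $x_j \geq \log p^{-1}$ for each $j \notin I$,
$$\prod_{i\in I} x_i = \frac{x_1\cdots x_d}{\prod_{j \notin I} x_j} \leq \frac{c\, p^{-1}(\log p^{-1})^{d+1}}{(\log p^{-1})^{d-k}} = c\, p^{-1}(\log p^{-1})^{k+1}.$$
Choosing $c \leq \min_{1 \le k \le d} c_k(\varepsilon/2^d)$ makes the hypothesis hold for every $I$. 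Proposition~\ref{prop:region-under-hyperplane} then gives, for each $I$, probability at least $1 - e^{-c_k(\varepsilon')(\log p^{-1})^{1/2}}$ that $|\langle A_I\rangle \cap T^*(1)| \leq p^{-\varepsilon'}$. Multiplying over the $2^d-1$ sets $I$ gives the bound $p^{-\varepsilon}$. The union-bound failure probability is at most $2^d e^{-c''(\log p^{-1})^{1/2}}$, which is at most $e^{-c(\log p^{-1})^{1/2}}$ for $p$ small once $c < c''$. For $p$ not small the statement can be made vacuous by shrinking $c$ further, or handled by the standing convention that $p$ is sufficiently small.

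The only delicate point is the bookkeeping: the right-hand side of the displayed product inequality really is a product of the lower-dimensional quantities, and each $A_I$ really is a $p$-random subset of the positive orthant in its own coordinates, as Proposition~\ref{prop:region-under-hyperplane} requires. Both follow directly from the disjointness of the supports. The key computation is the one above, showing that the lower bound $x_j \geq \log p^{-1}$ exactly compensates for the drop from exponent $d+1$ to exponent $k+1$.
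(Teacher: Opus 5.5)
Your proposal is correct and follows the paper's proof essentially step for step. You partition $A$ by support, bound $|S\cap T^*|$ by the product of $|\langle A_I\rangle\cap T^*|$ over supports $I$, and apply Proposition~\ref{prop:region-under-hyperplane} in dimension $|I|$ with $\varepsilon/2^d$, using $x_j\ge\log p^{-1}$ for $j\notin I$ to lower the exponent from $d+1$ to $|I|+1$. Your extra bookkeeping — absorbing the factor $2^d$ in the failure probability, and shrinking $c$ to make the hypothesis vacuous for non-small $p$ — is sound and fills in details the paper leaves implicit.
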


\begin{proof}
Following the notation from the preceding proof, define the tetrahedral region $$T^*:=\left\{(y_1, \ldots, y_d) \in \mathbb{Z}_{\geq0}^d: \frac{y_1}{x_1}+\cdots+\frac{y_d}{x_d} \leq 1\right\}.$$  For each subset $I \subseteq [d]$, let $A_I$ consist of the points $(x_1, \ldots, x_d) \in A$ such that
$$\{i \in [d]: x_i\neq 0\}=I.$$
Then we have a partition $A=\sqcup_{I \subseteq [d]}A_I$ and we get the corresponding upper bound
$$|\langle A \rangle \cap T^*| \leq \prod_{I \subseteq [d]} |\langle A_I \rangle \cap T^*|.$$
The contribution from $I=\emptyset$ is $1$.  To complete the proof, we will show that for each nonempty $I \subseteq [d]$, we have
$$|\langle A_I \rangle \cap T^*| \leq p^{-\varepsilon/2^d}$$
with probability at least $1-\exp(-\Omega_{d,\varepsilon}((\log p^{-1})^{1/2}))$.

Fix some nonempty $I \subseteq [d]$.  By permuting the coordinates, we may assume that $I=[m]$ for some $1 \leq m \leq d$ (this merely simplifies notation).  Let us restrict our attention to the copy of $\mathbb{Z}^m$ corresponding to the first $m$ coordinates.  Here, $A_I$ is a $p$-random subset of $\mathbb{Z}_{>0}^m$, and our tetrahedral region $T^*$ corresponds to the new region
$$T_I:=\left\{(y_1, \ldots, y_m) \in \mathbb{Z}_{>0}^d: \frac{y_1}{x_1}+\cdots+\frac{y_m}{x_m} \leq 1\right\}.$$
Our assumption $x_{m+1}, \ldots, x_d \geq \log p^{-1}$ guarantees that
$$x_1 \cdots x_m \leq cp^{-1}(\log p^{-1})^{m+1}.$$
Let us now apply Proposition~\ref{prop:region-under-hyperplane} with $d$ replaced by $m$ and with $\varepsilon$ replaced by $\varepsilon/2^d$.  As long as $c>0$ is sufficiently small, this proposition tells us that with probability at least $1-\exp(-\Omega_{d,\varepsilon}((\log p^{-1})^{1/2}))$, we have
$$|\langle A_I \rangle \cap T_I| \leq p^{-\varepsilon/2^d}.$$
This produces the desired conclusion.
\end{proof}

We remark that the quasi-exponential bound on the error probability is stronger than what we will need in our application, where sufficiently good polylogarithmic savings would suffice.

\subsection{Covering the hyperboloid region}
To deduce Proposition~\ref{prop:lower}, we cover the region $$\mathcal{R}_d(p,cp^{-1}(\log p^{-1})^{d+1})$$ with a small number of regions of the form treated in Proposition~\ref{prop:region-under-hyperplane-with-zeros}.

\begin{lemma}\label{lem:covering with hyperplanes}
Let $d$ be a positive integer, and let $p,Z>0$ be real parameters.  Then
$$\mathcal{R}_d(p,Z) \subseteq \bigcup_{(e_1, \ldots, e_d) \in \mathcal{E}}\left\{ (y_1, \ldots, y_d) \in \mathbb{Z}_{\geq 0}^d: \frac{y_1}{2^{e_1}}+\cdots+\frac{y_d}{2^{e_d}} \leq 1 \right\},$$
where we have defined the index set
$$\mathcal{E}:=\{(e_1, \ldots, e_d) \in \mathbb{Z}^d: e_i \geq \log_2 \log p^{-1} ~\forall i \quad \text{and} \quad  e_1+\cdots+e_d \leq d+d\log_2 d+\log_2 Z\}.$$
\end{lemma}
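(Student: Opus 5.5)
Given $y=(y_1,\ldots,y_d)\in\mathcal{R}_d(p,Z)$, write $L:=\log p^{-1}$ and $u_i:=y_i+L$, so that $u_i\ge L$ and $u_1\cdots u_d\le Z$. The idea is to take the hyperplane tangent to the hyperboloid $\prod u_i=\text{const}$ at the point $u$ and round its intercepts up to powers of $2$. The natural choice is intercepts $d u_i$, since the tangent hyperplane at $u$ to $\{\prod t_i=\prod u_i\}$ is $\sum_i t_i/(d u_i)=1$. Because $y_i\le u_i$, we have $\sum_i y_i/(du_i)\le \sum_i 1/d=1$. So $y$ lies under the hyperplane with intercepts $du_i$. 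Here we use only $y_i\le u_i$; the shift by $L$ is what ensures the intercepts are at least $L$.

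Now set $e_i:=\lceil \log_2(d u_i)\rceil$, so that $2^{e_i}\ge du_i$. Then
\[
\frac{y_1}{2^{e_1}}+\cdots+\frac{y_d}{2^{e_d}}\le \sum_i\frac{y_i}{du_i}\le 1,
\]
and $y$ lies in the region indexed by $(e_1,\ldots,e_d)$. It remains to check that $(e_1,\ldots,e_d)\in\mathcal{E}$.

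For the lower bound, $e_i\ge \log_2(du_i)\ge \log_2 u_i\ge\log_2 L=\log_2\log p^{-1}$.

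For the sum,
\[
e_1+\cdots+e_d\le \sum_i\bigl(\log_2(du_i)+1\bigr)=d+d\log_2 d+\log_2(u_1\cdots u_d)\le d+d\log_2 d+\log_2 Z,
\]
which is exactly the constraint defining $\mathcal{E}$.

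There is no real obstacle. The only points needing care are the choice of intercepts $du_i$ (rather than $u_i$) so that the $d$ terms sum to at most $1$, and the $+1$ loss per coordinate from the ceilings, which accounts for the additive $d$ in the definition of $\mathcal{E}$. One should also note that $\mathcal{E}$ may be empty, which happens when $Z<L^d$. In that case $\mathcal{R}_d(p,Z)$ is also empty, because every point satisfies $\prod_i u_i\ge L^d>Z$, so the containment holds trivially and the argument above needs no modification.
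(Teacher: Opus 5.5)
Your proof is correct and is essentially the paper's argument: choose intercepts equal to $d$ times a quantity that dominates $y_i$ and is at least $\log p^{-1}$, round them up to powers of $2$, and absorb the ceiling losses into the additive $d$. The only difference is cosmetic: the paper permutes coordinates and splits into cases, which amounts to using $\max(y_i,\log p^{-1})$, whereas your uniform choice $u_i=y_i+\log p^{-1}$ avoids the case split and plugs directly into the defining constraint $\prod_i u_i\le Z$.
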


\begin{proof}
Let $(x_1, \ldots, x_d) \in \mathcal{R}_d(p,Z)$.  Then each $x_i \geq 0$ and we have
$$(x_1+\log p^{-1}) \cdots (x_d+\log p^{-1}) \leq Z.$$
After possibly permuting the coordinates, we may assume that there is some $0 \leq m \leq d$ such that $x_1, \ldots, x_m \geq \log p^{-1}$ and $x_{m+1}, \ldots, x_d \leq \log p^{-1}$.  Define
$$e_i:=\begin{cases}
\lceil \log_2 x_i+\log_2 d \rceil, &\text{for } 1 \leq i \leq m;\\
\lceil \log_2 \log p^{-1}+\log_2 d \rceil, &\text{for } m<i \leq d.
\end{cases}$$
Clearly each $e_i \geq \log_2 \log p^{-1}$.  From the estimate
\begin{align*}
e_1+\cdots+e_d &\leq d+d\log_2 d+ \log_2(x_1\cdots x_m (\log p^{-1})^{d-m})\\
 &\leq d+d\log_2 d+\log_2((x_1+\log p^{-1}) \cdots (x_d+\log p^{-1}))\\
 &\leq d+d\log_2 d+\log_2 Z,
\end{align*}
we conclude that $(e_1, \ldots, e_d) \in \mathcal{E}$.  We will be done once we show that $$\frac{x_1}{2^{e_1}}+\cdots+\frac{x_d}{2^{e_d}} \leq 1.$$
Indeed, we have $2^{e_i} \geq dx_i$ for each $1 \leq i \leq d$, so each term in the sum is at most $1/d$, and the total is at most $1$.
\end{proof}

Proposition~\ref{prop:lower} follows quickly from the preceding lemma and Proposition~\ref{prop:region-under-hyperplane-with-zeros}.

\begin{proof}[Proof of Proposition~\ref{prop:lower}]
Let $\varepsilon>0$.  Let $c>0$ be the constant from Proposition~\ref{prop:region-under-hyperplane-with-zeros}.  Set $$Z:=2^{-d-d\log_2 d}cp^{-1}(\log p^{-1})^{d+1},$$ and let $\mathcal{E}$ be the set of indices produced by Lemma~\ref{lem:covering with hyperplanes}.  Notice that $|\mathcal{E}| \ll_{d,\varepsilon} (\log p^{-1})^d$.  For each $(e_1, \ldots, e_d) \in \mathcal{E}$, the tuple $(x_1, \ldots, x_d):=(2^{e_1}, \ldots, 2^{e_d})$ satisfies the hypotheses of Proposition~\ref{prop:region-under-hyperplane-with-zeros}.  This proposition and a union bound over $\mathcal{E}$ tell us that with probability at least $$1-\exp(-\Omega_{d,\varepsilon}((\log p^{-1})^{1/2}))$$ (notice that the polylogarithmic size of $\mathcal{E}$ is a lower-order contribution), we have
$$\left|\langle A \rangle \cap \left\{ (y_1, \ldots, y_d) \in \mathbb{Z}_{\geq 0}^d: \frac{y_1}{2^{e_1}}+\cdots+\frac{y_d}{2^{e_d}} \leq 1 \right\}\right| \leq p^{\varepsilon}$$
for all $(e_1, \ldots, e_d) \in \mathcal{E}$.  In such an outcome, it follows from Lemma~\ref{lem:covering with hyperplanes} that
$$|\langle A \rangle \cap \mathcal{R}_d(p,Z)| \leq p^{\varepsilon.}$$
We compute in Lemma~\ref{lem:appendix} that $|\mathcal{R}_d(p,Z)| \asymp_{d,c} p^{-1} (\log p^{-1})^{d+1}$.  Taking $\varepsilon:=1/2$ (for instance) concludes the proof.
\end{proof}

\section{Upper bounds}\label{sec:upper}

In this section we prove the ``density'' part of Theorem~\ref{thm:main}.

\begin{proposition}\label{prop:upper-bound-combined}
For each positive integer $d$, there is a constant $C=C_d>0$ such that the following holds.  Let $p>0$, sample a $p$-random subset $A \subseteq \mathbb{Z}_{\geq 0}^d$, and set $S^*:=\FS(A)$.  Then with high probability, we have
$$\mathbb{Z}_{\geq 0}^d \setminus\mathcal{R}_d(p,C p^{-1}(\log p^{-1})^{d+1}) \subseteq S^*.$$
\end{proposition}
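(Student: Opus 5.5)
We will show that with high probability every lattice point $x \in \mathbb{Z}_{\geq 0}^d$ with $\prod_i (x_i+\log p^{-1}) > Cp^{-1}(\log p^{-1})^{d+1}$ is a sum of distinct elements of $A$. The argument is a multidimensional version of the one-dimensional ``small set plus random shifts'' method of \cite{KMS}, arranged so that summands are not repeated. Put $L:=\log p^{-1}$. Call a box \emph{good} if it has the form $B=\prod_i [0,w_i]$ with $w_i\ge L$ and $\prod_i w_i = C'p^{-1}L^{d+1}$.

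\textbf{Step 1: many small sums in every good box.} Inside a good box we would first find, with very high probability, a set $X\subseteq \FS(A\cap B')$ that is dense in a box of comparable dimensions. Here $B'$ is a sub-box of $B$ with side lengths $w_i/2$, say. The expected number of points of $A$ in $B'$ is $\asymp C' L^{d+1}$, which is polylogarithmically large. We would run the usual doubling scheme.
\begin{itemize}
\item[(i)] Split $B'$ into $K\asymp L$ disjoint slabs along one coordinate.
\item[(ii)] In each slab take a fresh batch of random elements, and add them greedily: each new element $a$ replaces the current sumset $X$ by $X\cup(X+a)$.
\end{itemize}
Independence between batches comes from the disjointness of the slabs. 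Because a new element is always taken from a fresh slab, no summand is ever repeated. A density-increment estimate then gives $|X|$ growing by a constant factor per batch until $X$ has positive density in a box of size $\asymp \prod_i w_i$. The estimate is that if $X$ has density $\delta<1/2$ in its box, a random $a$ yields $|X\cup (X+a)|\ge(1+c)|X|$ with probability bounded below. This takes $O(\log(p^{-1}L^{d+1}))=O(L)$ batches, each needing $O(L^d)$ random points. The hypotheses $w_i\ge L$ and $\prod w_i\gg p^{-1}L^{d+1}$ are exactly what keep every slab in every coordinate direction supplied with enough points of $A$. This explains the shift by $\log p^{-1}$: when a side is shorter than $L$, the slab decomposition degenerates. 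In that case we instead treat coordinates of size $<L$ through lower-dimensional faces, using elements of $A$ whose coordinates lie in $[0,L]$. This is analogous to the role of $A_I$ in Proposition~\ref{prop:region-under-hyperplane-with-zeros}.

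\textbf{Step 2: fill in exactly.} Once $X$ has density at least $1-\eta$ in a box $D$, we use the remaining, unused elements of $A$ in the complementary part $B\setminus B'$ to reach every target $x$ with $x-D$ inside that region. For each such target $x$, the events ``$x-a\in X$'' over the unused $a\in A$ with $x-a\in D$ are independent of $X$. There are $\gg L^{d+1}$ candidate positions, each carrying an independent $p$-coin, and by the density of $X$ a constant fraction of them land in $X$. So the failure probability for $x$ is $\exp(-\Omega(C'L))=p^{\Omega(C')}$. The point $a$ is distinct from the summands of the element of $X$, since it comes from a disjoint region, so $x\in\FS(A)$.

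\textbf{Step 3: covering and union bound.} Every $x$ outside $\mathcal{R}_d(p,Cp^{-1}L^{d+1})$ lies in the ``far corner'' of some good box from a dyadic family. By the same dyadic reasoning as in Lemma~\ref{lem:covering with hyperplanes}, we can take $w_i\asymp x_i+L$, rescaled so that the product condition holds. For points $x$ with huge coordinates we use monotonicity instead. Adding a single large element $a\in A$ lying near the relevant coordinate direction shifts a filled box, so it suffices to handle $x$ in a bounded polynomial range $\|x\|\le p^{-O(1)}$, with the rest covered by translating via elements of $A$ far out. The number of relevant targets is $p^{-O(1)}$, so choosing $C$ large makes the union bound over Steps 1–2 succeed.

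The main obstacle is Step 1. We need the growth lemma to hold uniformly for very elongated boxes, with side ratio up to $p^{-1}$, while preserving distinctness of summands and handling coordinates near $L$. We would first try to prove the growth step in the elongated box by projecting onto the long coordinate and using the one-dimensional argument of \cite{KMS} fiberwise.
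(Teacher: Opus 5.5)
Your overall architecture (a seed set of subset sums, a complementary-element step, a dyadic covering, and separate treatment of small coordinates and far-out points) matches the paper's. However, the step that carries the whole argument, Step~1, is left open, and as you set it up it cannot give the threshold $p^{-1}L^{d+1}$. In your greedy scheme $X$ is the full subset-sum set of the elements added so far. So $|X|\gtrsim p^{-1}L$ forces at least $\log_2|X|\asymp L$ added elements, and the bulk of $X$ sits near half their total sum. Those elements come from slabs of $B'$, which have width $w_i/2$ in every coordinate except the slicing one, so their coordinates are typically $\asymp w_i$. Hence the dense part of $X$ lies at scale $\asymp Lw_i$, not in a box comparable to $B$. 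Step~2 then reaches only targets with $\prod_i x_i\gtrsim L^d\prod_i w_i\asymp p^{-1}L^{2d+1}$, which is off by $L^d$. To keep the sums at scale $w_i$, the seed elements must have coordinates $O(w_i/L)$. That is, they must come from a box of volume $\asymp p^{-1}L$, which contains only $\Theta(L)$ points of $A$, so a budget of $O(L)$ batches of $O(L^d)$ points is not available. Separately, the ``density-increment'' claim is unproved: the box containing $X$ grows with every added element, so growth of $|X|$ does not give density $1-\eta$. Density is also more than Step~2 needs. The complementary-element step only requires $|X|\ge Y'$ with $pY'\ge K\log p^{-1}$, and $X$ inside a box whose sides are at most half those of the target region.

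The missing idea is how the paper gets exactly this cardinality statement without any geometry (Lemmas~\ref{lem:finite-ab-gp} and~\ref{lem:ab-gp-lift}, used in Proposition~\ref{prop:upper-single-box}).
\begin{itemize}
\item Take $A_0=A\cap\prod_i[1,Y_i']$ with $Y_i'=\lceil Y_i/(DL)\rceil$. This box is a fundamental domain for $G=\prod_i\mathbb{Z}/Y_i'\mathbb{Z}$.
\item Apply the Erd\H{o}s--R\'enyi/KMS second-moment fact: $\asymp pY'\gg\log|G|$ uniformly random distinct elements of $G$ have all of $G$ as subset sums.
\item Lifting back, $\FS(A_0)$ has at least $|G|=Y'\asymp C'D^{-m}p^{-1}L$ distinct elements. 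All of them lie in $\prod_i[0,pY'Y_i']$, where $pY'Y_i'\asymp C'D^{-(m+1)}Y_i\le Y_i/2$.
\end{itemize}
The group argument does not see the shape of the box, so the elongated-box obstacle you anticipate never arises.

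Two further steps also need a concrete mechanism rather than your sketch. For coordinates below $DL$, the small coordinates of the summands must add up to exactly $z$. The paper runs the $m$-dimensional bulk argument on $A\cap(\mathbb{Z}_{\ge0}^m\times\{0\}^{d-m})$ and then adds a single element $a_{(e,z)}\in A\cap(\prod_{i\le m}[1,2^{e_i}]\times\{z\})$, which is automatically distinct from the summands on that face. For far-out points, single translates $a+F$ of a fixed bounded filled region $F$ cannot suffice, because almost surely $A$ misses arbitrarily large boxes far out. The paper instead uses, for each far point $w_i$, the $\gg\|w_i\|_\infty$ earlier points $w_j$ whose $k_i$-th coordinate is less than half that of $w_i$. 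This guarantees that $w_i-w_j$ is not already a summand of $w_j$. Harris's inequality then gives a failure probability of at most $e^{-p\|w_i\|_\infty/3}$, which is summable over all far points.
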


%The main work is showing that $S^*$ contains the points of $\mathcal{R}_d(p,C p^{-1}(\log p^{-1})^{d+1})$ where all coordinates are at scale at least $\log p^{-1}$; at the end we will use induction on dimension to handle the points closer to the coordinate planes.  
The basic mechanism is illustrated most simply in $1$ dimension:  Suppose we want to show that a particular $x \in \mathbb{Z}_{\geq 0}$ is contained in $S^*$ with very high probability.  We initially reveal the elements of $A$ below $x/2$ and afterwards reveal the elements above $x/2$.  The initial reveal determines $S^* \cap [0,x/2)$.  For each $y \in S^* \cap [0,x/2)$, the ``complementary'' element $x-y$ is in $A$ with probability $p$, and these events are independent for different $y$'s.  In particular, if we know that $S^* \cap [0,x/2)$ is substantially larger than $p^{-1}$, then with very high probability $S^*$ will contain $x$.

In order to obtain $p^{-1}$ small elements of $S^*$, we certainly need to look at subset sums involving at least the first $\log_2 p^{-1}$ elements of $A$; this many elements of $A$ can be found at scale $p^{-1} \log p^{-1}$.  A typical subset sum of these early elements of $A$ is at scale $p^{-1} (\log p^{-1})^2$, so the argument from the previous paragraph kicks in once $x$ is at scale $p^{-1} (\log p^{-1})^2$.  This heuristic provides a conceptual explanation for the form of the threshold in Theorem~\ref{thm:1-dim}.

In dimension $d$, we can find $\log p^{-1}$ elements of $A$ in any box (with one corner at the origin) of volume at scale $p^{-1} \log p^{-1}$; a typical subset sum dilates \emph{each} coordinate by a further factor of $\log p^{-1}$ and hence has product of coordinates at scale $p^{-1}(\log p^{-1})^{d+1}$.  Thus we can expect $S^*$ to contain a point $x=(x_1, \ldots, x_d)$ once $x_1 \cdots x_d \gg p^{-1}(\log p^{-1})^{d+1}$ and each $x_i \gg \log p^{-1}$.  A different approach, using induction on dimension, is necessary to handle points $x$ with some ``small'' coordinates $x_i \ll \log p^{-1}$.

\subsection{Subset sums in finite abelian groups}
Our first task is showing that if $G$ is a finite abelian group of order $|G|=q$ and $A \subseteq G$ is a random subset of size much larger than $\log q$, then with very high probability $\FS(A)=G$.  We use a standard Poissonization argument to reduce to a situation that is treated in \cite{KMS} using the second-moment method.  Alternatively, one could appeal to classical (and slightly stronger) work of Erd\H{o}s and R\'enyi~\cite{ER} or of Alon and Roichman~\cite{AR}, which use variants of the same second-moment calculation; we include the reduction to \cite{KMS} (where the argument is short and elementary) for the reader's convenience.  For recent work on related questions, see~\cite{MT} and the references therein.

\begin{lemma}\label{lem:finite-ab-gp}
Let $q,L \in \mathbb{Z}_{>0}$ with $\log_2 q \leq L \leq q^{1/2}$.  Let $G$ be an abelian group of order $q$.  If $A \subseteq G$ is a uniformly random subset of size $L$, then $\FS(A)=G$ with probability at least $$1-q^2 \cdot 2^{1-L}.$$
\end{lemma}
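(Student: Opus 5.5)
Throughout, take $A=\{a_1,\ldots,a_L\}$ with the $a_i$ distinct. The bound $1-q^2\cdot 2^{1-L}$ points to a union bound over characters together with a first-moment estimate. I would compare the uniform $L$-subset with an i.i.d. model and use the classical second-moment (Fourier) argument for subset sums.

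\emph{Step 1: Poissonization.} Let $b_1,\ldots,b_L$ be i.i.d. uniform in $G$. Conditioned on the $b_i$ being distinct, $\{b_1,\ldots,b_L\}$ is a uniformly random $L$-subset of $G$. By a union bound, the probability of a collision is at most $\binom{L}{2}/q\le 1/2$, using $L\le q^{1/2}$. Hence
$$\mathbb{P}[\FS(A)\ne G]\le 2\,\mathbb{P}\bigl[\FS(\{b_i\})\ne G \text{ and the } b_i \text{ are distinct}\bigr].$$
So it suffices to bound, in the i.i.d. model, the probability that some $g\in G$ is not a subset sum. (One can equally quote the corresponding statement of \cite{KMS}.)

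\emph{Step 2: Second moment for a fixed target.} Fix $g\in G$ and let $N_g$ be the number of $\epsilon\in\{0,1\}^L$ with $\sum_i\epsilon_ib_i=g$. Then $\mathbb{E}N_g=2^L/q$. Expanding with characters,
$$N_g=\frac1q\sum_{\chi}\overline{\chi(g)}\prod_{i=1}^L(1+\chi(b_i)),$$
and the trivial character contributes exactly $2^L/q$. So $N_g=0$ forces
$$\Bigl|\sum_{\chi\ne1}\overline{\chi(g)}\prod_i(1+\chi(b_i))\Bigr|\ge 2^L.$$
For a nontrivial $\chi$ we have $\mathbb{E}|1+\chi(b)|^2=2$, since $\mathbb{E}\chi(b)=0$. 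By independence, $\mathbb{E}\bigl|\prod_i(1+\chi(b_i))\bigr|^2=2^L$.

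\emph{Step 3: Union bound over characters.} If the sum over $\chi\ne1$ has modulus at least $2^L$, then some nontrivial $\chi$ satisfies $\bigl|\prod_i(1+\chi(b_i))\bigr|\ge 2^L/(q-1)$. By Markov's inequality, a fixed $\chi$ does this with probability at most $2^L(q-1)^2/4^L$. Summing over the $q-1$ nontrivial characters gives $\mathbb{P}[N_g=0]\le q^3 2^{-L}$. A union bound over $g$ would then give $q^4 2^{-L}$, which is too weak.

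\emph{Sharpening Step 3.} Do not union bound over $g$. The event $\{\exists g: N_g=0\}$ is contained in the event $\{\exists\chi\ne1:\ |\prod_i(1+\chi(b_i))|\ge 2^L/(q-1)\}$, since the $g$-dependence enters only through unimodular factors. This gives a failure probability of at most $(q-1)^3 2^{-L}$.

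Getting exactly $q^2$ needs a more careful moment computation, which is the main obstacle. Instead of the crude Markov bound per character, use Cauchy--Schwarz with Parseval. The quantity $\sum_g|N_g-2^L/q|^2$ equals $\frac1q\sum_{\chi\ne1}\bigl|\prod_i(1+\chi(b_i))\bigr|^2$, whose expectation is $(q-1)2^L/q$. If some $N_g=0$, this sum is at least $4^L/q^2$. Markov's inequality then gives failure probability at most $(q-1)2^L\cdot q/4^L\le q^2 2^{-L}$. Combined with the factor $2$ from Step 1, this yields the stated bound $1-q^2\cdot 2^{1-L}$.
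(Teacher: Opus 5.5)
Your proof is correct, and its first step is exactly the paper's reduction. The paper also samples $g_1,\ldots,g_L$ i.i.d., bounds the distinctness probability below by $1-L(L-1)/(2q)\ge 1/2$, and rearranges $\mathbb{P}[Y]\le \mathbb{P}[X]\,\mathbb{P}[Y\mid X]+1-\mathbb{P}[X]$; this is your factor of $2$.

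Where you differ is the i.i.d. estimate $\mathbb{P}[\exists g:\ N_g=0]\le q^2 2^{-L}$. The paper does not prove it. It quotes the second-moment argument of \cite[Proposition 3.1]{KMS}, which is stated there for cyclic groups, with a footnote saying it transfers verbatim. Your final Parseval step proves it from scratch. You use the single global second moment $\sum_g |N_g-2^L/q|^2=\frac1q\sum_{\chi\ne 1}\bigl|\prod_i(1+\chi(b_i))\bigr|^2$, whose expectation is $(q-1)2^L/q$, and apply Markov. This gives $q(q-1)2^{-L}$ with no union bound over targets. Your route gives a self-contained proof that treats every finite abelian group uniformly via characters, so the footnote is unnecessary; the paper's route is shorter.

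Two cleanups:
\begin{itemize}
\item In Step 2, $\mathbb{E}N_g$ is actually $(2^L-1)/q+\mathbf{1}_{g=0}$, not $2^L/q$, because the empty sum always equals $0$. This is harmless, since your Parseval step centers at the deterministic trivial-character term $2^L/q$ and never uses $\mathbb{E}N_g$.
\item Step 3, together with its first ``sharpening,'' is superseded by the Parseval argument and can simply be deleted.
\end{itemize}
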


\begin{proof}
Sample $g_1, \ldots, g_L \in G$ independently and uniformly at random (with replacement).  Let $X$ denote the event that $g_1, \ldots, g_L$ are all distinct, and let $Y$ denote the event that
\[\left\{\varepsilon_1g_1+\cdots+\varepsilon_L g_L:\varepsilon_1,\ldots,\varepsilon_L\in\{0,1\}\right\}=G.\]
The second-moment argument from \cite[Proposition 3.1]{KMS}\footnote{The lemma there is stated only for finite cyclic groups but, as remarked afterwards, goes through verbatim for general finite abelian groups.} tells us that
$$\mathbb{P}[Y] \geq 1 -q^2 \cdot 2^{-L}.$$
At the same time, we have
\begin{align*}
\mathbb{P}[Y] &=\mathbb{P}[X] \cdot \mathbb{P}[Y|X \text{ occurs}]+(1-\mathbb{P}[X]) \cdot \mathbb{P}[Y|X \text{ does not occur}]\\
 &\leq \mathbb{P}[X] \cdot \mathbb{P}[Y|X \text{ occurs}]+(1-\mathbb{P}[X]).
\end{align*}
Combining these two inequalities and rearranging, we find that
$$\mathbb{P}[Y|X \text{ occurs}] \geq 1-\frac{q^2 \cdot 2^{-L}}{\mathbb{P}[X]}.$$
We can estimate
$$\mathbb{P}[X]=\prod_{i=0}^{L-1} (1-i/q) \geq 1-\sum_{i=0}^{L-1} i/q=1-\frac{L(L-1)}{2q} \geq 1/2$$
(in the last inequality using the assumption $L \leq q^{1/2}$).  Thus in fact
$$\mathbb{P}[Y|X \text{ occurs}] \geq 1-q^2 \cdot 2^{1-L}.$$
Conditioning on $X$ occurring makes $\{g_1, \ldots, g_L\}$ into a uniformly random $L$-element subset of $G$, so the probability described in the lemma statement is precisely $\mathbb{P}[Y|X \text{ occurs}]$.
\end{proof}

We record a consequence which bears more directly on the setting of Proposition~\ref{prop:upper-bound-combined}.  The following lemma produces a ``dense spot'' in $\FS(A)$ which will power the ``complementary elements'' argument described in the proof overview.

\begin{lemma}\label{lem:ab-gp-lift}
For every real $\gamma>0$, there is a constant $C=C(\gamma)>0$ such that the following holds.  Let $m$ be a positive integer and let $p>0$, and suppose that $Y_1, \ldots, Y_m \in \mathbb{Z}_{>0}$ satisfy $$Y:=Y_1 \cdots Y_m \geq C p^{-1}(\log p^{-1}).$$
Sample a $p$-random subset $A \subseteq [1,Y_1] \times \cdots \times [1,Y_m]$.  Then with probability $1-O_{\gamma}(p^{\gamma})$, we have
$$|\FS(A) \cap ([0,pYY_1] \times \cdots \times [0,pYY_m])| \geq Y.$$
\end{lemma}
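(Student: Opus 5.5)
Our plan is to reveal $A$, project it to the finite group $G:=\mathbb{Z}/Y_1 \times \cdots \times \mathbb{Z}/Y_m$, and apply Lemma~\ref{lem:finite-ab-gp}. Subset sums of elements of $A$ then reduce to all residues modulo $(Y_1,\ldots,Y_m)$, and since the summands are small, these sums lie in the box $[0,pYY_1]\times\cdots\times[0,pYY_m]$.

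\textbf{Step 1: concentration of $|A|$.} Note $|[1,Y_1]\times\cdots\times[1,Y_m]|=Y$, so $\mathbb{E}|A|=pY\geq C\log p^{-1}$. A Chernoff bound shows $pY/2\leq |A|\leq pY$ (say, $|A|\leq 2pY$; we adjust constants by first passing to a subset) with probability at least $1-\exp(-\Omega(pY))\geq 1-p^{\gamma}$ once $C$ is large in terms of $\gamma$. If $|A|$ exceeds $pY$, we discard elements to obtain $A'\subseteq A$ with $|A'|=L:=\lfloor pY/2\rfloor$. Conditional on $|A|$, the set $A$ is a uniform subset of its size, so $A'$ is a uniformly random $L$-subset of the box.

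\textbf{Step 2: apply Lemma~\ref{lem:finite-ab-gp}.} The reduction map from the box to $G$ is a bijection, with $|G|=q=Y$. Hence the image of $A'$ is a uniformly random $L$-subset of $G$. We check the hypotheses $\log_2 q\leq L\leq q^{1/2}$. The lower bound holds because $L\geq (C/3)\log p^{-1}$, and we need this to exceed $\log_2 Y$. This is the delicate point, since $Y$ could be huge compared with $p^{-1}\log p^{-1}$. We handle it by shrinking: choose $Y_i'\leq Y_i$ such that $Y':=\prod Y_i'$ lies in $[Cp^{-1}\log p^{-1},\,2^m Cp^{-1}\log p^{-1}]$, or directly handle large $Y$ by noting $pY\gg\log Y$ whenever $pY\geq C\log p^{-1}$. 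The latter works: if $Y\leq p^{-2}$ then $\log_2 Y\ll\log p^{-1}$, and if $Y$ is larger then $pY\geq Y^{1/2}\gg\log Y$. The upper bound $L\leq q^{1/2}$ becomes $pY\leq Y^{1/2}$, i.e. $Y\leq p^{-2}$. For larger $Y$ we instead take $L:=\lfloor Y^{1/2}\rfloor$, which still exceeds $\log_2 Y$ by a wide margin. The lemma then gives $\FS(A')=G$ with failure probability at most $Y^2 2^{1-L}$. We need this to be $O(p^{\gamma})$, which holds once $L\geq 2\log_2 Y+\gamma\log_2 p^{-1}+1$. In the regime $Y\leq p^{-2}$, this requires $C\geq C(\gamma)$; for larger $Y$ it is automatic.

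\textbf{Step 3: lifting.} For each $g\in G$ choose a subset $B\subseteq A'$ whose sum is congruent to $g$. The actual integer sum of $B$ lies in $\FS(A)$, and distinct $g$ give distinct sums, so this produces $Y$ distinct elements of $\FS(A)$. Each coordinate of such a sum is at most $|A'|\cdot Y_i\leq pY\cdot Y_i$, so every sum lies in the required box.

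We expect the main obstacle to be the parameter bookkeeping in Step 2: matching $L$ to the window $[\log_2 q,\,q^{1/2}]$ while keeping the failure probability $O_\gamma(p^\gamma)$ uniformly over all $Y\geq Cp^{-1}\log p^{-1}$. The case split at $Y=p^{-2}$ described above is the intended fix.
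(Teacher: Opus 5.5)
Your proposal is correct and follows the paper's proof: a Chernoff bound on $|A|$, a uniformly random subset $A'\subseteq A$ of size about $pY/2$, Lemma~\ref{lem:finite-ab-gp} applied to $G=\prod_i \mathbb{Z}/Y_i\mathbb{Z}$ (using the box as a fundamental domain), and lifting, with each coordinate bounded by $|A'|Y_i\le pYY_i$. Your extra case split for large $Y$ covers the hypothesis $L\le q^{1/2}$, which the paper's proof leaves unchecked; note, though, that with $L=\lfloor pY/2\rfloor$ the threshold is $Y\le 4p^{-2}$ rather than $p^{-2}$, so it is cleanest to set $L:=\min\{\lfloor pY/2\rfloor,\lfloor Y^{1/2}\rfloor\}$ (and to choose $A'$ uniformly at random inside $A$, which is what makes it a uniform $L$-subset).
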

%\VB{Add Picture Rectangles}

\begin{proof}
Consider the finite abelian group
$$G:=(\mathbb{Z}/Y_1 \mathbb{Z}) \times \cdots \times (\mathbb{Z}/Y_m \mathbb{Z})$$
of order $|G|=Y_1 \cdots Y_m=Y$.  Notice that the ``host box'' $[1,Y_1] \times \cdots \times [1,Y_m]$ for $A$ is a fundamental domain in $\mathbb{Z}^m$ for $G$ under the canonical projection map.  A Chernoff bound tells us that
$$\mathbb{P} \left[|A| \leq p|G|/2\right] \leq \exp(-p|G|/8)=\exp(-pY/8) \leq p^{C/8}.$$
Suppose we are in an outcome with $|A| \geq p|G|/2$.  Choose a uniformly random subset $A' \subseteq A$ of size $\lceil p|G|/2\rceil$; its projection of to $G$ is a uniformly random subset of the same size.  So Lemma~\ref{lem:finite-ab-gp} tells us that with probability at least
$$1-|G|^2 \cdot 2^{1-\lceil p|G|/2\rceil}\geq 1-p^{-\Omega(C)},$$
the set $\FS(A')$ covers all of the residues modulo $G$, and in particular $|\FS(A')| \geq |G|=Y$.  Since $$\FS(A') \subseteq  [0,|A'|Y_1] \times \cdots \times [0,|A'|Y_m] \subseteq [0,pYY_1] \times \cdots \times [0,pYY_m]$$
by design, we conclude by a union bound that
$$\mathbb{P}[|\FS(A) \cap ([0,pYY_1] \times \cdots \times [0,pYY_m])| \geq Y] \geq 1-p^{C/8}-p^{-\Omega(C)};$$
this quantity is $1-O_\gamma(p^{\gamma})$ if $C$ is sufficiently large in terms of $\gamma$.
\end{proof}

\subsection{The bulk}

Armed with the tools from the previous subsection, we show that with high probability $S^*$ contains all of the desired points away from the coordinate planes; this is the main component of the proof of Proposition~\ref{prop:upper-bound-combined}.

\begin{proposition}\label{prop:upper-bound-bulk}
For each positive integer $m$, there is a constant $C'=C'_m>0$ such that the following holds.  Let $p>0$, sample a $p$-random subset $A \subseteq \mathbb{Z}_{\geq 0}^m$, and set $S^*:=\FS(A)$.  Then with high probability, we have
$$\{(x_1, \ldots, x_m) \in \mathbb{Z}_{\geq 0}: x_1\cdots x_m \geq C' p^{-1}(\log p^{-1})^{m+1} ~\text{and} ~x_i \geq C' \log p^{-1} ~\forall i\} \subseteq S^*.$$
\end{proposition}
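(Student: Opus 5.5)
The plan is the ``complementary elements'' mechanism from the overview, driven by Lemma~\ref{lem:ab-gp-lift}, followed by a union bound over a dyadic family of boxes. Fix a target point $x=(x_1,\ldots,x_m)$ with $x_1\cdots x_m\geq C'p^{-1}(\log p^{-1})^{m+1}$ and all $x_i\geq C'\log p^{-1}$. Write $L:=\log p^{-1}$.

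First we choose box side lengths $Y_i\asymp x_i/(KL)$, with $K$ a large constant depending on $m$. These satisfy $Y_i\geq 1$ because $x_i\geq C'L$, and
$$Y:=\prod Y_i\asymp x_1\cdots x_m/(KL)^m\geq C p^{-1}L$$
once $C'$ is large in terms of $C=C(\gamma)$ and $K$. In general $Y$ is much larger than $p^{-1}L$, so $pY$ is not about $L$. For that reason we shrink the box instead: we take $Y_i\leq x_i/(KL)$ with $\prod Y_i\asymp Cp^{-1}L$. This is possible because $\prod x_i/(KL)\geq Cp^{-1}L$ and each $x_i/(KL)\geq 1$; we simply reduce coordinates in turn. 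Then $pYY_i\ll CL\cdot x_i/(KL)\leq x_i/4$ for $K$ large. Lemma~\ref{lem:ab-gp-lift}, applied to $A\cap B$ with $B:=[1,Y_1]\times\cdots\times[1,Y_m]$, gives with probability $1-O(p^\gamma)$ a set $F:=\FS(A\cap B)\cap\prod[0,x_i/4]$ of size at least $Y\gg p^{-1}L$.

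Next comes the complementary step. Expose only $A\cap B$. For each $y\in F$, the point $x-y$ has all coordinates at least $3x_i/4>Y_i$, so it lies outside $B$. The points $x-y$ are distinct, and each lies in $A$ independently with probability $p$. If any of them lies in $A$, then $x=y+(x-y)$ with $x-y\notin A\cap B$, which is a valid distinct-summand representation, so $x\in S^*$. Hence the conditional failure probability is at most $(1-p)^{|F|}\leq \exp(-\Omega(CL))=p^{\Omega(C)}$. The failure probability for a single $x$ is therefore $O(p^{\gamma})$.

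The main obstacle is the union bound, since there are infinitely many target points. I would handle it in two steps. The box $B$ depends on $x$ only through dyadic choices, so there are only $O_m(\text{polylog})$ times a dyadic count of relevant boxes; but the targets $x$ themselves are unbounded in number, so per-point failure must be summed. For $x$ with large norm I would enlarge the box: choosing $\prod Y_i\asymp p^{-1}\log(\|x\|_\infty/p)$ instead, still with $Y_i\leq x_i/(KL)$, which is possible when $x$ is large. Both the Lemma~\ref{lem:ab-gp-lift} failure and the complementary failure then become at most $(p\|x\|_\infty^{-1})^{\gamma'}$ for large $\gamma'$. (Lemma~\ref{lem:ab-gp-lift} is stated with $p^{-1}\log p^{-1}$, but its proof gives failure $\exp(-\Omega(pY))+Y^2 2^{-\Omega(pY)}$, which is what we use here.) Summing over $x\in\mathbb{Z}_{\geq0}^m$, the number of points with $\|x\|_\infty\approx 2^j$ is $2^{jm}$, which is beaten by $(p2^{-j})^{\gamma'}$ with $\gamma'>m+1$. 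The constraint $\|x\|_\infty\geq C'L$ makes the total $o(1)$. Checking that these enlarged boxes still fit under $x_i/(KL)$ per coordinate, with the extra $\log\|x\|$ absorbed into the slack of the largest coordinate, is the delicate bookkeeping step I expect to take the most care.
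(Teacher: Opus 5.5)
Your complementary-element argument for targets with $\|x\|_\infty\le p^{-O(1)}$ is sound; it is essentially the paper's argument on $\mathcal{W}(Y_1,\ldots,Y_m)\cap[1,p^{-2}]^m$. The union bound over the remaining infinitely many targets, however, has a genuine gap when $m\ge 2$, and the bookkeeping you defer cannot be carried out.

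Every element of $\FS(A\cap B)$ can have $i$-th coordinate as large as $|A\cap B|\,Y_i\approx pY\,Y_i\ge pY$, because $Y_i\ge 1$. So keeping $F$ inside $\prod_i[0,x_i/4]$ forces $pY\le \min_i x_i/4$. Enlarging $pY$ is paid for in \emph{every} coordinate, not just the largest one. Concretely, with $pY\approx\log(\|x\|_\infty/p)$ you need $Y_i\lesssim x_i/\log(\|x\|_\infty/p)$ for all $i$; your retained condition $Y_i\le x_i/(KL)$ no longer yields $pYY_i\le x_i/4$. This is impossible for targets such as $x=(\lceil C'L\rceil,n,\ldots,n)$ with $n\ge p^{-C'}$. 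For these $x$, both the Chernoff failure and the complementary failure you can certify are no better than $e^{-O(\min_i x_i)}=p^{O(C')}$, uniformly in $n$. The sum over this infinite family therefore diverges.

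The missing idea is that far-away points should not be reached by re-running the seed-box argument with larger boxes. The paper proceeds as follows.
\begin{enumerate}
\item It reduces, via Lemma~\ref{lem:upper-dyadic}, to $O_m((\log p^{-1})^m)$ orthants $\mathcal{W}(Y_1,\ldots,Y_m)$.
\item It covers the bounded part $\mathcal{W}\cap[1,p^{-2}]^m$ exactly as you do.
\item It then bootstraps. Process the remaining points $w$ of the orthant in an order compatible with the coordinatewise order, and let $k$ index the largest coordinate of $w$. There are $\gg\|w\|_\infty$ earlier points $w'\le w$ of the orthant with $w'_k<w_k/2$.
\end{enumerate}
If $w$ is the first point missing from $\FS(A)$, all such $w'$ lie in $\FS(A)$. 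Hence none of the distinct points $w-w'$ can lie in $A$; such a $w-w'$ could not already be a summand of $w'$, since its $k$-th coordinate exceeds $w'_k$. This event has probability $e^{-\Omega(p\|w\|_\infty)}$, which is summable against the $O_m(n^{m-1})$ points of $\ell^\infty$-norm $n\ge p^{-2}$.

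A cruder repair of your own scheme also works. For $\|x\|_\infty\ge p^{-3}$, the $\gg\|x\|_\infty$ disjoint pairs $\{a,x-a\}$ with $0\le a\le x$ show that $x$ is a sum of two distinct elements of $A$, except with probability $e^{-\Omega(p^2\|x\|_\infty)}$. Your seed-box argument then only needs to handle the $O(p^{-3m})$ targets below that scale.
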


As in the proof of Proposition~\ref{prop:lower}, we cover the region of interest with several simpler sets that can be handled individually.  For a positive integer $m$ and parameters $Y_1, \ldots, Y_m>0$, define
$$\mathcal{W}(Y_1, \ldots, Y_m):=\left\{ (y_1, \ldots, y_d) \in \mathbb{Z}^m_{\geq 0}: y_i \geq Y_i ~\forall i \right\}.$$
It is easy to approximate the region from Proposition~\ref{prop:upper-bound-bulk} with regions of the form $\mathcal W(Y_1, \ldots, Y_m)$.

\begin{lemma}\label{lem:upper-dyadic}
Let $m$ be a positive integer, and let $p,\kappa,Z>0$ be real parameters.  Then
$$\{(x_1, \ldots, x_m) \in \mathbb{Z}_{\geq 0}: x_1\cdots x_m \geq Z ~\text{and} ~x_i \geq \kappa \log p^{-1} ~\forall i\} \subseteq \bigcup_{(e_1, \ldots, e_m) \in \mathcal{E}'} \mathcal{W}(2^{e_1}, \ldots, 2^{e_m}),$$
where we have defined the index set
$$\mathcal{E}':=\{(e_1, \ldots, e_m) \in \mathbb{Z}^m: \log_2(\kappa \log p^{-1})-1 \leq e_i \leq \log_2(Z) ~\forall i \quad \text{and} \quad  e_1+\cdots+e_d \geq \log_2(Z)-m\}.$$
\end{lemma}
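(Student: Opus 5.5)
Given a point $(x_1,\ldots,x_m)$ in the left-hand set, so that $x_1\cdots x_m\ge Z$ and each $x_i\ge\kappa\log p^{-1}$, I will produce an index vector $(e_1,\ldots,e_m)\in\mathcal{E}'$ with $2^{e_i}\le x_i$ for every $i$. This is the dyadic rounding-down analogue of the rounding-up in Lemma~\ref{lem:covering with hyperplanes}.

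The natural first choice is $e_i:=\lfloor \log_2 x_i\rfloor$. It satisfies $2^{e_i}\le x_i$, which is exactly membership in $\mathcal{W}(2^{e_1},\ldots,2^{e_m})$. It also satisfies $e_i\ge \log_2 x_i-1\ge \log_2(\kappa\log p^{-1})-1$ and
$$e_1+\cdots+e_m\ge \sum_i(\log_2 x_i-1)=\log_2(x_1\cdots x_m)-m\ge \log_2 Z-m.$$
So the lower bounds defining $\mathcal{E}'$ hold automatically.

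The only remaining constraint is the upper bound $e_i\le\log_2 Z$, and this is the one step that needs care, because a point may have one enormous coordinate. I will truncate: set $e_i:=\min(\lfloor\log_2 x_i\rfloor,\lfloor\log_2 Z\rfloor)$. Then $2^{e_i}\le x_i$ still holds and $e_i\le\log_2 Z$. The lower bound on each $e_i$ survives provided $\log_2(\kappa\log p^{-1})-1\le\lfloor\log_2 Z\rfloor$. This is harmless in the intended regime $Z\gg\kappa\log p^{-1}$. If the range were empty, the left-hand side would be empty as well, since $x_i\le Z$ forces things; to be safe I will note that this inequality holds whenever the left-hand set is nonempty after truncation, or simply assume $Z\ge\kappa\log p^{-1}$, as in the application.

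For the sum constraint after truncation I split into two cases. If no index was truncated, the computation above applies. If some index $j$ was truncated, then $e_j=\lfloor\log_2 Z\rfloor\ge\log_2 Z-1$. Every other $e_i$ satisfies $e_i\ge\log_2(\kappa\log p^{-1})-1$, which is at least $-1$ when $\kappa\log p^{-1}\ge1$; otherwise I use $e_i\ge\lfloor\log_2 x_i\rfloor\ge0$ for integer $x_i\ge1$. Hence
$$e_1+\cdots+e_m\ge\log_2 Z-1-(m-1)=\log_2 Z-m,$$
and $(e_1,\ldots,e_m)\in\mathcal{E}'$ in either case.

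The main obstacle is therefore only this bookkeeping with the truncation and the mild nondegeneracy assumptions ($x_i\ge1$, $Z$ large). There is also a typo to fix: the sum $e_1+\cdots+e_d$ in the definition of $\mathcal{E}'$ should read $e_1+\cdots+e_m$.
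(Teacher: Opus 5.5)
Your proof is correct and is the paper's proof: the same truncated exponents $e_i=\min\{\lfloor\log_2 x_i\rfloor,\lfloor\log_2 Z\rfloor\}$, with the sum bound checked separately according to whether some index was truncated. The nondegeneracy hypothesis you flag is genuinely needed, and the paper leaves it implicit. However:
\begin{itemize}
\item Your aside that the left-hand side would otherwise be empty is false; e.g.\ $m=1$, $Z=1$, $\kappa\log p^{-1}=8$ gives a nonempty left side but empty $\mathcal{E}'$.
\item Strictly, you also need $Z\ge1$ when $\kappa\log p^{-1}<1$. Your bound $e_i\ge\lfloor\log_2 x_i\rfloor\ge0$ fails for another truncated index, where $e_i=\lfloor\log_2 Z\rfloor$; for example $m=2$, $Z=0.3$, $\kappa\log p^{-1}=0.1$ makes $\mathcal{E}'$ empty.
\end{itemize}
Both conditions hold in the application, where $Z$ is far larger than $\max(1,\kappa\log p^{-1})$.
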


\begin{proof}
We proceed much as in the proof of Lemma~\ref{lem:covering with hyperplanes}.  Suppose $(x_1, \ldots, x_m) \in \mathbb{Z}_{\geq 0}$ satisfies
$$x_1\cdots x_m \geq Z \quad \text{and} \quad x_i \geq \kappa \log p^{-1} ~\forall i.$$
Define the exponents
$$e_i:=\min\{\lfloor \log_2(x_i) \rfloor, \lfloor \log_2(Z)\rfloor\}$$
for $1 \leq i \leq m$.  It is immediate that $(x_1, \ldots, x_m) \in \mathcal{W}(2^{e_1}, \ldots, 2^{e_m})$, and from
$$e_1+\cdots+e_m \geq \min\{\log_2(Z)-1,\log_2(x_1\cdots x_m)-m\} \geq \log_2(Z)-m$$
we see that $(e_1, \ldots, e_m) \in \mathcal{E}'$.
\end{proof}

We now show that with very high probability, $S^*=\FS(A)$ contains $\mathcal{W}(Y_1, \ldots, Y_m)$ whenever $Y_1 \cdots Y_m$ is at least around $p^{-1}(\log p^{-1})^{m+1}$.  See \cref{fig:schematic-prooflem3.6} for a schematic of the various pieces of the argument.

\begin{proposition}\label{prop:upper-single-box}
For each positive integer $m$ and real $\gamma>0$, there is a constant $C'=C'_m(\gamma)>0$ such that the following holds.  Let $p>0$, and suppose that $Y_1, \ldots, Y_m \geq C' \log p^{-1}$ satisfy $$Y_1 \cdots Y_m \geq C' p^{-1}(\log p^{-1})^{m+1}.$$
Sample a $p$-random subset $A \subseteq \mathbb{Z}_{\geq 0}^m$, and set $S^*:=\FS(A)$.  Then with probability $1-O_{m,\gamma}(p^{\gamma})$, we have
$$\mathcal{W}(Y_1, \ldots, Y_m) \subseteq S^*.$$
\end{proposition}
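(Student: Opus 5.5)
The plan is the ``complementary elements'' mechanism: first build a dense set of subset sums inside a small box near the origin using Lemma~\ref{lem:ab-gp-lift}, then use fresh randomness in the rest of $A$ to hit every target. A single application cannot cover all of $\mathcal{W}(Y_1,\ldots,Y_m)$, because that set is infinite. So I will first cover $\mathcal{W}(Y_1,\ldots,Y_m)$ dyadically. Every $x\in\mathcal{W}(Y_1,\ldots,Y_m)$ lies in some shell
$$B(f):=\prod_i[2^{f_i},2^{f_i+1})\quad\text{with } 2^{f_i}\geq Y_i/2 .$$
I will treat each shell separately, with its own dense spot, and union-bound over shells. Write $P:=2^{f_1+\cdots+f_m}$. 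Then $P\gg C'p^{-1}(\log p^{-1})^{m+1}$, and the shell has at most $2^mP$ points.

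\emph{Step 1 (dense spot for a shell).} Fix a shell $f$. Set $L:=K(\log p^{-1}+\log P)$ for a large constant $K$. I will choose integers $Y_i'\approx t\,2^{f_i}$, where the common factor $t$ is defined by $t^m P=Y':=L/p$. The hypothesis $P\gg C'p^{-1}(\log p^{-1})^{m+1}$ gives $t^m\lesssim K(\log p^{-1}+\log P)/(pP)$. This is at most roughly $K'/(C'(\log p^{-1})^m)$, and it decays further as $P$ grows. Two consequences follow. First, $pY't=Lt$ is at most a small constant once $C'$ is large. Second, $Y_i'\geq tY_i/2\gtrsim C'^{1-1/m}\geq 1$ because $Y_i\geq C'\log p^{-1}$.

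Now I apply Lemma~\ref{lem:ab-gp-lift}, or more precisely its proof, to $A\cap\prod_i[1,Y_i']$. Rather than quoting the stated $O(p^\gamma)$ bound, I will reread that proof: the failure probability is at most $e^{-pY'/8}+Y'^2 2^{1-\lceil pY'/2\rceil}$, which is $\leq (pP)^{-\Omega(K)}p^{\Omega(K)}$. This produces a set $D_f\subseteq\FS(A\cap\prod[1,Y_i'])$ with $|D_f|\geq Y'$. Moreover $D_f$ lies in $\prod[0,pY'Y_i']\subseteq\prod[0,2^{f_i}/4]$, using $pY't\le 1/4$.

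\emph{Step 2 (complementary elements).} I reveal $A$ inside the small box first. For each $x\in B(f)$ and each $y\in D_f$, every coordinate of $x-y$ is at least $\tfrac34 2^{f_i}>Y_i'$, so $x-y$ lies outside the revealed box. The events $\{x-y\in A\}$ for distinct $y$ are independent, with probability $p$ each. Whenever one of them occurs, the element $x-y\in A$ is distinct from the summands of $y$, so $x\in\FS(A)$. Hence
$$\mathbb{P}[x\notin S^*\mid D_f]\leq(1-p)^{Y'}\leq e^{-L}=(pP)^{-K}p^{K}\cdot p^{-K}\cdots,$$
and in any case this is at most $e^{-K\log p^{-1}}P^{-K}$. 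A union bound over the $\leq 2^mP$ points of the shell leaves $O(p^{K}P^{1-K})$ per shell.

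\emph{Step 3 (summing).} I will sum the two failure contributions over all shells $f$ with $2^{f_i}\geq Y_i/2$. The number of shells with a given $P$ is polylogarithmic in $P$, and $P\geq p^{-1}$. The total is therefore $\sum_P \mathrm{polylog}(P)\,P^{1-K}p^{K}$ plus the Step~1 terms, which is $O(p^\gamma)$ once $K$ and then $C'$ are large in terms of $\gamma$ and $m$.

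The main obstacle is Step~1's parameter balancing. The dense spot must be large enough that $pY'$ beats $\log P$, in order to survive the union bound over infinitely many points. At the same time it must sit in a box so small, namely $pY'Y_i'\le 2^{f_i}/4$, that the complements escape the revealed region. The choice $pY'\asymp\log(P/p)$ with a uniform scaling factor $t$ is what I would try first. The delicate check is that $Lt\to0$ uniformly, which uses $\log P/P$ being decreasing together with the $(\log p^{-1})^{m+1}$ margin in the hypothesis.
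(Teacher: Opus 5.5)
\textbf{There is a genuine gap in Step 1.} Your bound $Y_i'\ge tY_i/2\gtrsim C'^{1-1/m}$ relies on a lower bound on $t$ that holds only when $P$ is near its minimum. In general $t=(L/(pP))^{1/m}$ decays as $P$ grows, so on an elongated shell $t2^{f_i}\to 0$ for any coordinate that stays small.

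Concretely, take $m=2$, $2^{f_1}\approx Y_1/2$ with $Y_1=\lceil C'\log p^{-1}\rceil$, and $2^{f_2}=2^N$ with $N\to\infty$. Then $t2^{f_1}=(L2^{f_1}/(p2^N))^{1/2}\to 0$, so you are forced to take $Y_1'=1$. Every seed in $\prod_i[1,Y_i']$ then has first coordinate exactly $1$. The $\approx pY'/2=L/2$ seeds used in Lemma~\ref{lem:ab-gp-lift} therefore produce subset sums whose first coordinate equals the number of summands, ranging over $[0,\approx L/2]$.

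Your containment $D_f\subseteq\prod_i[0,pY'Y_i']\subseteq\prod_i[0,2^{f_i}/4]$ now requires $L=pY'\le 2^{f_1}/4$, i.e.\ $K\log P\lesssim C'\log p^{-1}$. This fails once $N\gg C'\log p^{-1}/K$. For such $x$, typical $y\in D_f$ (built from $\approx L/4$ seeds) have $y_1>x_1$, so $x-y$ has a negative coordinate. The lemma gives no control over how many residues are realized by the few subset sums with small first coordinate.

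So the per-shell union bound, which forces $pY'\gtrsim\log P$, is incompatible with a seed box of strictly positive coordinates on sufficiently elongated shells. Separately, Lemma~\ref{lem:finite-ab-gp} needs $pY'/2\le Y'^{1/2}$, i.e.\ $L\lesssim p^{-1}$, which fails once $\log P\gtrsim p^{-1}$. That point is easily patched by monotonicity in the subset size. Your Steps 2 and 3 (distinctness of $x-y$ from the summands of $y$, and the summation over shells) are fine.

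\textbf{How the paper avoids this.} It never needs a seed that grows with the target. It first shrinks the $Y_i$ so that $\prod Y_i\le 2C'p^{-1}(\log p^{-1})^{m+1}$. It then builds one seed from the box with sides $\lceil Y_i/(D\log p^{-1})\rceil$ and runs the complementary-element argument only on the bounded window $\mathcal{W}\cap[1,\lfloor p^{-2}\rfloor]^m$, where a union bound over $p^{-2m}$ points is affordable.

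Points $w$ with $\|w\|_\infty=n\ge p^{-2}$ are covered by bootstrapping from already-covered points $w'\le w$ whose value in $w$'s largest coordinate is less than half of $w$'s. There are at least $n/3$ such $w'$, and $w-w'$ cannot be a summand of $w'$. Harris's inequality then bounds the failure probability by $e^{-pn/3}$, which is summable against the $O(n^{m-1})$ points of each $\ell^\infty$-norm.

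\textbf{A repair within your framework.} Allow seeds with vanishing coordinates, using the fundamental domain $\prod_i[0,Y_i'-1]$ with $Y_i'=1$ in short directions. Choose the $Y_i'$ non-uniformly, subject to $Y_i'-1\lesssim 2^{f_i}/L$. The inequality $L^{m+1}\lesssim pP$, which is exactly what your check $Lt\le 1/4$ verified, is what makes $\prod_i Y_i'\ge L/p$ achievable under these caps.
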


%\VB{Add Picture of the $\mathcal{W}$.}

\begin{proof}
Let $C'=C'_m(\gamma)>1$ be a large constant to be determined later.  We may assume that $\gamma \geq 1$ and that $p$ is sufficiently small in terms of $m,\gamma$.  By possibly decreasing some of the $Y_i$'s, we may assume that their product is (for instance) $$Y:=Y_1 \cdots Y_m \leq 2C' p^{-1}(\log p^{-1})^{m+1}.$$

We begin by deploying Lemma~\ref{lem:ab-gp-lift}.  Set
$$Y'_i:=\left\lceil \frac{Y_i}{D\log p^{-1}} \right\rceil$$
for $1 \leq i \leq m$, where $D=D_m(\gamma)$ is a constant satisfying $0<D\leq C'$ to be determined later.  Then
$$Y':=Y'_1 \cdots Y'_m \asymp_m \frac{Y}{D^m (\log p^{-1})^m} \asymp C'D^{-m} p^{-1} \log p^{-1}.$$
If $C',D$ are chosen so that $C' D^{-m}$ is sufficiently large relative to the constant $C(\gamma)$ from Lemma~\ref{lem:ab-gp-lift}, then this lemma implies that with probability $1-O_{\gamma}(p^\gamma)$, the set
$$A_0:=A \cap [1,Y'_1] \times \cdots \times [1,Y'_m]$$
satisfies
$$|\FS(A_0) \cap ([0,pY'Y'_1] \times \cdots \times [0,pY'Y'_m])| \geq Y'.$$
Suppose we are in an outcome for $A_0$ where this is the case.  Notice that
$$pY'Y'_i \asymp_m p \cdot \frac{C'p^{-1} \log p^{-1}}{D^m} \cdot \frac{Y_i}{D \log p^{-1}}=C' D^{-(m+1)} Y_i$$
is smaller than $Y_i/2$ (say) as long as $C',D$ are chosen to make $C'D^{-(m+1)}$ sufficiently small.

Next, we handle the points of $\mathcal{W}(Y_1, \ldots, Y_m)$ with all coordinates fairly small.  Let
$$\mathcal{W}'(Y_1, \ldots, Y_m):= \mathcal{W}(Y_1, \ldots, Y_m) \cap [1,\lfloor p^{-2} \rfloor]^m$$
(the particular choice of $p^{-2}$ is not important).  For each $x \in \mathcal{W}'(Y_1, \ldots, Y_m)$ consider the points $x-z$ with
$$z \in \FS(A_0) \cap ([0,pY'Y'_1] \times \cdots \times [0,pY'Y'_m]).$$
The number of such points $x-z$ is at least $Y'$, and they all lie outside of the region from which $A_0$ was sampled.  Thus, when we reveal $A_1:=(A \cap [1,\lfloor p^{-2}\rfloor ]^m) \setminus A_0$, with probability at least
$$1-(1-p)^{Y'} \geq 1-e^{-pY'} \geq 1-p^{\Omega_m(C' D^{-m})},$$
some such point $x-z$ lies in $A_1$ and thus
$$x=z+(x-z) \in \FS(A_0)+(A_1) \subseteq \FS(A_0 \sqcup A_1).$$
A union bound over $x \in \mathcal{W}'(Y_1, \ldots, Y_m)$ gives that $\mathcal{W}'(Y_1, \ldots, Y_m) \subseteq \FS(A_0 \sqcup A_1) \subseteq \FS(A)$ with probability at least
$$1-p^{-2m+\Omega_m(C' D^{-m})};$$
this probability is $1-O_{m,\gamma}(p^\gamma)$ as long as $C'D^{-m}$ is sufficiently large in terms of $m,\gamma$.  To summarize, we have shown that
$$\mathbb{P}[\mathcal{W}'(Y_1, \ldots, Y_m) \subseteq \FS(A)] \geq 1-O_{m,\gamma}(p^\gamma)$$
as long as the constants $C'\geq D>0$ are chosen such that $C'D^{-m}$ is sufficiently large and $C'D^{-(m+1)}$ is sufficiently small (in terms of $m,\gamma$), which is clearly possible.

Finally, we handle the points of $$\mathcal{W}(Y_1, \ldots, Y_m) \setminus \mathcal{W}'(Y_1, \ldots, Y_m).$$  Let $u:=|\mathcal{W}'(Y_1, \ldots, Y_m)|$.  Fix an enumeration $w_1, w_2, \ldots$ of the points of $\mathcal{W}(Y_1, \ldots, Y_m)$ that begins with the elements of $\mathcal{W}'(Y_1, \ldots, Y_m)$ and has the property that $i \geq j$ whenever $w_i$ is coordinate-wise greater than or equal to $w_j$. Let $E_i$ denote the event that $w_i \in \FS(A)$.  These events are non-negatively correlated, so for any subsets $J_i \subseteq [1,i-1]$, we have
\begin{align*}
\mathbb{P}[E_i \text{ fails for some $i>u$}~ |~ E_1, \ldots, E_{u} \text{ all occur}] &=\sum_{i=u+1}^\infty \mathbb{P}[E_i \text{ fails}~ |~ E_1, \ldots, E_{i-1} \text{ all occur} ]\\
 &\leq \sum_{i=u+1}^\infty \mathbb{P}[E_i \text{ fails}~ |~ \text{$E_j$ occurs $\forall j \in J_i$}].
\end{align*}
For each index $i>u$, let $k_i$ denote the largest coordinate of $w_i$ (choosing arbitrarily in case of a tie), and let $J_i$ be the set of indices $j<i$ such that $w_j$ is coordinate-wise less than or equal to $w_i$ and the $k_i$-th coordinate of $w_j$ is less than half as large as the $k_i$-th coordinate of $w_i$.  The definition of $\mathcal{W}'(Y_1, \ldots, Y_m)$ ensures that $$|J_i| \geq \|w_i\|_\infty/3$$
for all $i>u$, where $\|\cdot \|_\infty$ denotes the $\ell^\infty$-norm (largest coordinate).  By considering which of the points $w_i-w_j$ lie in $A$, we conclude as above that
$$\mathbb{P}[E_i \text{ fails } | \text{$E_j$ occurs $\forall j \in J_i$}] \leq (1-p)^{|J_i|} \leq e^{-p\|w_i\|_\infty/3}$$
for each $i>u$.
We have $\|w_i\|_\infty \geq p^{-2}$ for all $i>u$.  For each $n \geq p^{-2}$, the number of indices $i>u$ with $\|w_i\|_\infty=n$ is $O_m(n^{m-1})$.  Thus we find that
$$\mathbb{P}[E_i \text{ fails for some $i>u$} ~| ~ \mathcal{W}'(Y_1, \ldots, Y_m) \subseteq \FS(A)] \ll_m \sum_{n \geq p^{-2}} n^{m-1} \cdot e^{-pn/3} \ll_m e^{-p^{-1}/4} \ll_\gamma p^{\gamma},$$
so
$$\mathbb{P}[\mathcal{W}(Y_1, \ldots, Y_m) \subseteq \FS(A) ~|~ \mathcal{W}'(Y_1, \ldots, Y_m) \subseteq \FS(A)] \geq 1-O_{m,\gamma}(p^\gamma).$$
Combining this with the outcome of the previous paragraph yields the proposition.
\end{proof}

\begin{figure}[ht]
\centering

\begin{tikzpicture}[scale=6]

% --- Define coordinates (spread out a bit more proportionally) ---
\def\YpOne{0.09}
\def\pYYpOne{0.27}
\def\YOne{0.6}
\def\pinv{1}

\def\YpTwo{0.06}
\def\pYYpTwo{0.18}
\def\YTwo{0.4}

% --- Yellow region: outside [0,p^{-2}]^2 ---
\fill[yellow!40] (\YOne,\YTwo) rectangle (2,1.5);
\fill[white] (\YOne,\YTwo) rectangle (\pinv,\pinv);

% --- Orange region ---
\fill[orange!50] (\YOne,\YTwo) rectangle (\pinv,\pinv);
%\fill[white] (0,0) rectangle (\YOne,\YTwo);

% --- Red region ---
\fill[red!40] (0,0) rectangle (\pYYpOne,\pYYpTwo);
\fill[white] (0,0) rectangle (\YpOne,\YpTwo);

% --- Blue region ---
\fill[blue!40] (0,0) rectangle (\YpOne,\YpTwo);

% --- Draw boundaries (thin black lines) ---
\draw[thin] (0,0) rectangle (\YpOne,\YpTwo);
\draw[thin] (0,0) rectangle (\pYYpOne,\pYYpTwo);
%\draw[thin] (0,0) rectangle (\YOne,\YTwo);
%\draw[thin] (0,0) rectangle (\pinv,\pinv);
\draw[thin] (\YOne,\YTwo) rectangle (\pinv,\pinv);
\draw[<->] (\YOne,1.5) -- (\YOne,\YTwo) -- (2,\YTwo);
\draw[dashed] (\YOne,0) -- (\YOne, \YTwo) -- (0,\YTwo);
\draw[dashed] (\pinv,0) -- (\pinv, \YTwo);
\draw[dashed] (\YOne, \pinv) -- (0,\pinv);

% --- Axes ---
\draw[->] (0,0) -- (2,0) node[right] {$x_1$};
\draw[->] (0,0) -- (0,1.5) node[above] {$x_2$};

% --- Tick marks (x-axis) ---
\draw (\YpOne,0) -- (\YpOne,-0.015) node[below] {$Y'_1$};
\draw (\pYYpOne,0) -- (\pYYpOne,-0.015) node[below] {$pYY'_1$};
\draw (\YOne,0) -- (\YOne,-0.015) node[below] {$Y_1$};
\draw (\pinv,0) -- (\pinv,-0.015) node[below] {$p^{-2}$};

% --- Tick marks (y-axis) ---
\draw (0,\YpTwo) -- (-0.015,\YpTwo) node[left] {$Y'_2$};
\draw (0,\pYYpTwo) -- (-0.015,\pYYpTwo) node[left] {$pYY'_2$};
\draw (0,\YTwo) -- (-0.015,\YTwo) node[left] {$Y_2$};
\draw (0,\pinv) -- (-0.015,\pinv) node[left] {$p^{-2}$};

% --- Region labels (repositioned for clarity) ---
\node at (0.03,0.03) {$A_0$};

\node at (0.15,0.1) {$\FS(A_0)$};

\node at (0.8,0.72) {$\mathcal W'(Y_1,Y_2)$};

\node at (1.3,1.15) {$\mathcal W(Y_1,Y_2)\setminus \mathcal W'(Y_1,Y_2)$};

\end{tikzpicture}

\caption{The regions involved in the proof of \cref{prop:upper-single-box} for $m=2$.  The ``seed'' set $A_0$ lies in the small blue rectangle.  Its set of subset sums $\FS(A_0)$ contains many elements of the larger red rectangle.  Next, $\FS(A)$ contains all of the points of the orange rectangle $\mathcal W'(Y_1,Y_2)$.  Finally, $\FS(A)$ contains all of the points of the (unbounded) yellow region $\mathcal W(Y_1,Y_2)\setminus \mathcal W'(Y_1,Y_2)$.}
\label{fig:schematic-prooflem3.6}

\end{figure}
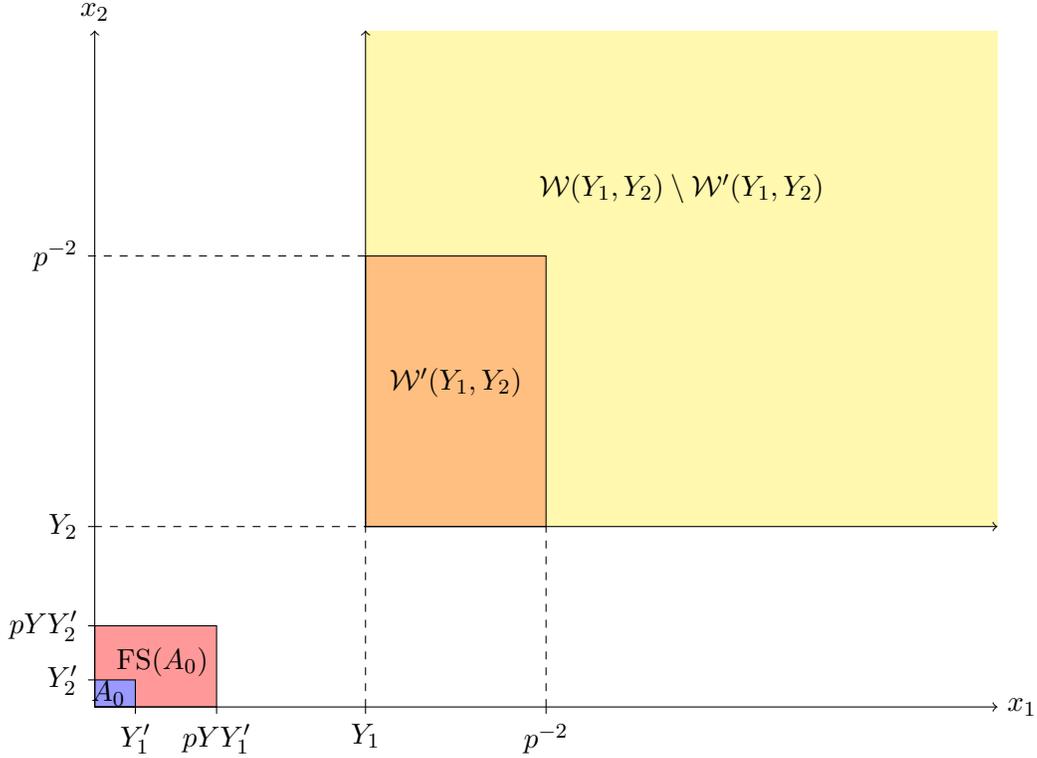

%\VB{Need to add also $A_1$? And perhaps modify the proportion, as $\frac{Y_1}{2}$ does nor seem to be really half of $Y_1$ (and same on the vertical axis)}

It is now straightforward to deduce Proposition~\ref{prop:upper-bound-bulk}.  We remark that we do not need the full strength of the arbitrary polynomial savings in Proposition~\ref{prop:upper-single-box}; sufficiently good polylogarithmic savings would serve equally well.

\begin{proof}[Proof of Proposition~\ref{prop:upper-bound-bulk}]
Let $C'$ be $2^m$ times the constant $C'_m(1)$ from Proposition~\ref{prop:upper-single-box}.  Let $\mathcal{E}'$ be the index set produced by Lemma~\ref{lem:upper-dyadic} with the parameters $\kappa:=C'$ and $Z:=C' p^{-1} \log p^{-1}$, and note that
$$|\mathcal{E}'| \ll_m (\log Z)^m \ll_m (\log p^{-1})^m.$$
For each $(e_1, \ldots, e_m) \in \mathcal{E}'$, apply Proposition~\ref{prop:upper-single-box} with $\gamma:=1$ (for instance) and $$(Y_1, \ldots, Y_m):=(2^{e_1}, \ldots, 2^{e_m})$$ to deduce that $\mathcal{W}(2^{e_1}, \ldots, 2^{e_m}) \subseteq \FS(A)$ with probability $1-O_m(p)$.  A union bound over $\mathcal{E}'$ tells us that 
$$\bigcup_{(e_1, \ldots, e_m) \in \mathcal{E}'} \mathcal{W}(2^{e_1}, \ldots, 2^{e_m}) \subseteq \FS(A)$$
with probability at least
$$1-O_m(p(\log p^{-1})^m)=1-o_m(1);$$
this yields the desired conclusion due to Lemma~\ref{lem:upper-dyadic}.
\end{proof}

\subsection{Alternative perspective via syndeticity}\label{sec:syndetic}
We digress to describe an alternative perspective on the arguments from the preceding subsection; the present subsection is not logically necessary for the rest of the paper and can be safely skipped.

In the $1$-dimensional instance of Proposition~\ref{prop:upper-bound-combined}, we eventually aim to show that with high probability $S^*=\FS(A)$ contains all positive integers of size at least $Cp^{-1}(\log p^{-1})^2$.  We begin with the more modest goal of showing that with high probability $S^*$ is syndetic with syndeticity constant at most around $p^{-1}\log p^{-1}$.  The argument is powered by the following simple lemma.

\begin{lemma}\label{lem:syndetic}
Let $R \in \mathbb{Z}_{>0}$.  If $a_1, a_2, \ldots$ is an increasing sequence of positive integers such that $a_1\leq R$ and $a_i\leq a_1+\cdots+a_{i-1}+R$ for all $i>1$, then $\FS(\{a_1, a_2, \ldots\})$ intersects every interval of $R$ consecutive nonnegative integers.
\end{lemma}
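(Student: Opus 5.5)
We prove by strong induction on $n\ge 0$ that every integer in $[0,\,a_1+\cdots+a_n+R-1]$ lies within distance less than $R$ below some element of $\FS(\{a_1,\ldots,a_n\})$. More precisely, the invariant is the following: every interval $I=[t,t+R-1]$ of $R$ consecutive nonnegative integers with $t\le s_n:=a_1+\cdots+a_n$ meets $\FS(\{a_1,\ldots,a_n\})$. Since $a_i\ge 1$ and the sequence is infinite, $s_n\to\infty$. Every interval $[t,t+R-1]$ therefore has $t\le s_n$ for some $n$, and the invariant then gives the lemma.

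For the base case $n=0$, we have $s_0=0$ and $\FS(\emptyset)=\{0\}$. The only admissible interval is $[0,R-1]$, which contains $0$.

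For the inductive step, assume the invariant holds for $n-1$ and let $t\le s_n$. If $t\le s_{n-1}$, the inductive hypothesis applies directly, because $\FS(\{a_1,\ldots,a_{n-1}\})\subseteq \FS(\{a_1,\ldots,a_n\})$. Otherwise $s_{n-1}<t\le s_n$, and we translate by $a_n$. Set $t':=t-a_n$. Then $t'\le s_n-a_n=s_{n-1}$. The hypothesis $a_n\le s_{n-1}+R$ (or $a_1\le R$ when $n=1$) gives $t'\ge s_{n-1}+1-a_n\ge 1-R$, so the interval $[t',t'+R-1]$ contains a nonnegative integer.

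We split into two cases according to the sign of $t'$. If $t'\ge 0$, the inductive hypothesis gives some $z\in \FS(\{a_1,\ldots,a_{n-1}\})\cap[t',t'+R-1]$. If $t'<0$, take $z=0$, which lies in $[t',t'+R-1]$ because $t'\ge 1-R$. In both cases $z$ is a subset sum not using $a_n$, so $z+a_n\in\FS(\{a_1,\ldots,a_n\})\cap[t,t+R-1]$. The sums remain distinct-element sums precisely because $a_n$ is not a summand of $z$. This completes the induction.

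The only delicate point is the boundary case $t'<0$. That is exactly where the hypothesis $a_i\le a_1+\cdots+a_{i-1}+R$ enters, ensuring $t'\ge 1-R$ so that the shifted interval still contains $0$. The monotonicity of the sequence is not actually needed.
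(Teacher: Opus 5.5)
Your proof is correct and is exactly the induction the paper sketches: the paper's invariant, that $\FS(\{a_1,\ldots,a_i\})$ meets every interval of $R$ consecutive integers in $[0,a_1+\cdots+a_i+R-1]$, is your invariant, and your shift by $a_n$ together with the boundary case $t'<0$ supplies the details the paper omits. One small correction to your closing remark: the ordering is indeed irrelevant, but the step $z+a_n\in\FS(\{a_1,\ldots,a_n\})$ uses that $a_n$ differs in value from $a_1,\ldots,a_{n-1}$, which monotonicity guarantees. For example, the constant sequence $a_i=1$ with $R=1$ satisfies the other hypotheses, yet $\FS(\{1\})=\{0,1\}$.
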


For the proof (whose details we omit), one shows inductively that $\FS(\{a_1, \ldots, a_i\})$ intersects every interval of $R$ consecutive integers in $[0,a_1+\cdots+a_i+R-1]$.

One can readily check that for $R$ of size at least around $p^{-1} \log p^{-1}$, with high probability the elements of a $p$-random subset $A \subseteq \mathbb{Z}_{\geq 0}$ satisfy the hypotheses of Lemma~\ref{lem:syndetic}.  It will be convenient to obtain the same conclusion for a $p$-random subset $A \subseteq [R/4,R/2] \cup [2R,\infty)$, as in the following lemma (whose proof we again omit).

\begin{lemma}\label{lem:syndetic-random}
Let $p>0$, and suppose that $R=R(p) \geq 100p^{-1} \log p^{-1}$ is a multiple of $4$.  Sample a $p$-random subset $$A \subseteq [R/4,R/2] \cup [2R,\infty).$$  Then with high probability $\FS(A)$ intersects every interval of $R$ consecutive nonnegative integers.
\end{lemma}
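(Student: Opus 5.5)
The plan is to verify the hypotheses of Lemma~\ref{lem:syndetic} with high probability, after ordering $A$ increasingly as $a_1<a_2<\cdots$. The elements of $A\cap[R/4,R/2]$ come first and the elements of $A\cap[2R,\infty)$ come after them. We verify three things in turn: the first element, the small elements, and the large elements.

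\emph{The first element and the small elements.} The interval $[R/4,R/2]$ has $R/4+1\ge 25p^{-1}\log p^{-1}$ points. Split it into consecutive blocks of length $R/8$; there are two of them, and more generally one can use blocks of length $R/4$. Each block is empty with probability at most $(1-p)^{R/8}\le p^{12}$, so with high probability $A\cap[R/4,R/2]\neq\emptyset$, and then $a_1\le R/2\le R$.

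For $i>1$ with $a_i\le R/2$ we have $a_i\le R\le a_1+\cdots+a_{i-1}+R$ trivially. So the condition holds automatically within $[R/4,R/2]$, and the only real content is the jump to $[2R,\infty)$ and the growth thereafter.

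\emph{Large elements.} Let $\sigma_0$ denote the sum of the elements of $A\cap[R/4,R/2]$. By a Chernoff bound, with probability $1-e^{-\Omega(pR)}=1-p^{\Omega(1)}$ this set has at least $pR/8\ge 12\log p^{-1}$ elements, so $\sigma_0\ge (pR/8)(R/4)\ge 3R\log p^{-1}\ge 2R$.

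Now suppose some $a_i\ge 2R$ violates the condition, meaning $a_i>\sigma+R$, where $\sigma$ is the sum of all earlier elements. Then $A$ misses the entire interval $(\max(2R,\cdot),\,\sigma+R]$ of earlier-unused integers above the last element. The clean way to phrase this is as a gap condition: it suffices that for every $t\ge 2R$, the set $A$ meets $(t,t+R]$, together with a consistency check.

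The check runs as follows. Let $a_{j}$ be the first element above $2R$. Since $A$ meets $(2R-R,2R]$ is irrelevant here, use the gap property at $t=2R-R$? Easier: the gap property gives $a_j\le 3R\le \sigma_0+R$, valid since $\sigma_0\ge 2R$. Afterwards, consecutive large elements satisfy $a_{i}\le a_{i-1}+R\le \sigma+R$. So the condition holds for all $i$, provided every interval $(t,t+R]$ with $t\ge 2R$ meets $A$.

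\emph{Main obstacle: the union bound over infinitely many windows.} This is the one step needing care. It suffices to check blocks $[2R+kR/2,\,2R+(k+1)R/2)$ for $k\ge0$, since every window of length $R$ contains such a block. The block is empty with probability at most $e^{-pR/2}\le p^{50}$, but there are infinitely many blocks, so the naive union bound fails.

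The fix is to use the flexibility in the condition: far out, $\sigma$ is huge, so we need much weaker gap control. I will replace the uniform gap condition by the requirement that $A$ meets $[t,\,t+\max(R,t/2)]$ for each $t$ in a geometric-plus-arithmetic net.

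\begin{itemize}
\item For $t\le T:=p^{-3}$, use blocks of length $R/2$. There are $O(p^{-3})$ of them, each empty with probability at most $p^{50}$, so this range fails with probability at most $p^{47}$.
\item For $t>T$, use dyadic-style blocks $[t,\,t+t/4]$ along $t=T(5/4)^k$. Each is empty with probability at most $e^{-pt/4}$, and the sum over $k$ is at most $e^{-\Omega(p^{-2})}$.
\end{itemize}

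Then, by induction, once the $a$'s pass $T$ we have $\sigma\ge T\gg R$ from earlier elements. The size of $\sigma$ here is at least the previous element, because the sequence increases and the previous element was at least about $\tfrac45$ of the current one. So the next element is at most about $\tfrac54\,a_{i-1}+\tfrac{a_{i-1}}4\le a_{i-1}+\sigma$, but we need $\le\sigma+R$.

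A sharper bound on $\sigma$ resolves this: $\sigma$ is at least the sum of the roughly $pT/4$ elements in $[T/2,T]$, and we want to show $\sigma\ge 2a_{i-1}$. For this, note that after $T$, $\sigma$ grows to be at least a fixed multiple of the current scale, as follows.
\begin{itemize}
\item With high probability each block $[t,5t/4]$ contains at least $pt/8$ elements.
\item The sum over $[T/2,T]$ already exceeds $pT^2/16\gg T$.
\item Inductively, $\sigma\ge p a_{i-1}^2/100\ge 10a_{i-1}$.
\item Hence $a_i\le \tfrac54(a_{i-1}+\cdot)\le \sigma+R$.
\end{itemize}
Assembling these bounds by a union bound gives the lemma.
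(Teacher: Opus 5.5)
Your approach is the one the paper indicates: it omits this proof, but says one checks the hypotheses of Lemma~\ref{lem:syndetic} with high probability. Most of your estimates are right: $a_1\le R/2$, the condition is automatic inside $[R/4,R/2]$, and $\sigma_0\ge pR^2/32\ge 2R$. You also correctly see that the naive union bound over infinitely many windows fails, so gap control must be relaxed far out.

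\emph{The gap.} The step that fails as written is the choice $T:=p^{-3}$, which tacitly assumes $R$ is much smaller than $p^{-3}$. This is harmless in the paper's application, where $R\asymp p^{-1}(\log p^{-1})^2$. However, the lemma allows any $R=R(p)\ge 100p^{-1}\log p^{-1}$. Once $R\gtrsim p^{-3}$, the interval $[T/2,T]$ and the first geometric blocks $[T(5/4)^k,T(5/4)^{k+1}]$ are no longer contained in $[2R,\infty)$. For example, if $R=p^{-3}$, the block $[T,\tfrac54T]=[R,\tfrac54R]$ lies in $(R/2,2R)$, where $A$ is empty with certainty. So your claims that these blocks are nonempty with probability $1-e^{-\Omega(pt)}$, and that $A\cap[T/2,T]$ has sum $\gg T$, are false there.

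\emph{The repair.} Let $T$ depend on $R$, e.g.\ $T:=R^2$. There are then at most $2R$ short blocks, with total failure probability at most $2Re^{-pR/2}=(2/p)\,ue^{-u/2}$, where $u:=pR\ge 100\log p^{-1}$. Since $ue^{-u/2}$ is decreasing, this is $O(p^{49}\log p^{-1})$. Moreover $[T/2,T]\subseteq[2R,\infty)$ and $pT\ge 100R\log p^{-1}\to\infty$, which is all the later steps use.

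\emph{The final step.} Replace the end of the argument by an actual inequality. The claim that the previous element is ``about $\tfrac45$ of the current one'' is unjustified, and ``$\tfrac54(a_{i-1}+\cdot)$'' is not a bound. What your block counts give is the following. If $a_{i-1}\in[t_k,t_{k+1})$ with $t_k:=T(5/4)^k$, then $a_i\le t_{k+2}\le\tfrac{25}{16}a_{i-1}$. Meanwhile $a_1+\cdots+a_{i-1}$ is at least the sum of the elements of $A$ in $[t_{k-1},t_k)$ (or in $[T/2,T)$ if $k=0$). That sum is at least $pa_{i-1}^2/100\ge 2a_{i-1}$, since $pa_{i-1}\ge pT\to\infty$. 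The crossing from below $T$ to above $T$ works the same way, using $a_i\le\tfrac54T$.

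\emph{A simpler alternative.} You can avoid the two regimes altogether. With high probability, for every $x\ge 4R$ the interval $[x/2,x)$ contains two elements of $A$. This follows from a union bound over the blocks $[y,\tfrac54y)$ with $y=2R(5/4)^k$, one of which sits inside $[x/2,x)$; the total failure probability is dominated by the $k=0$ term, which is $O(p^{50}\log p^{-1})$. Then every $a_i\ge 4R$ is at most the sum of two earlier elements. Every $a_i\in[2R,4R)$ satisfies $a_i<4R\le\sigma_0$.
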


We now sketch a simple deduction of the $1$-dimensional instance of Proposition~\ref{prop:upper-bound-bulk}.  Set $R:=(C/2) p^{-1} (\log p^{-1})^2$ (which we assume to be a multiple of $4$), and partition $A=A_1 \sqcup A_2$, where
$$A_1:=A \cap ([R/4,R/2] \cup [2R,\infty)) \quad \text{and} \quad A_2:=A \cap ([0,R/4-1] \cup [R/2+1,2R-1]).$$
Lemma~\ref{lem:syndetic-random} tells us that with high probability $\FS(A_1) \subseteq \mathbb{Z}_{\geq 0}$ is $R$-syndetic.  Arguing as in the second and third paragraphs of the proof of Proposition~\ref{prop:upper-single-box} (but more simply since our box is now an interval), we find that with high probability $[R,2R-1] \subseteq \FS(A_2)$, as long as $C$ is sufficiently large.  Then $\FS(A)=\FS(A_1)+\FS(A_2)$ contains all integers greater than or equal to $R$, as desired.

The higher-dimensional instances of Proposition~\ref{prop:upper-bound-bulk} go similarly.  Roughly speaking, one splits $$A=A_1 \sqcup \cdots \sqcup A_{m} \sqcup A_*,$$ where each $A_i$ is the part of $A$ lying on the $i$-th coordinate axis, and $A_{*}$ is the rest of $A$.  Applying Lemma~\ref{lem:syndetic-random} to all of the $A_i$'s individually and summing gives that with high probability $$\FS(A \setminus A_*)=\FS(A_1)+\cdots+\FS(A_m) \subseteq \mathbb{Z}_{\geq 0}^m$$ is $[1,R]^m$-syndetic.   Let $X$ denote the set appearing in the statement of Proposition~\ref{prop:upper-bound-bulk}, and let $X':=\{x \in X: x-[1,R]^m \not\subseteq X\}$ be the part of $X$ ``within distance $R$ of the boundary''.  Lemma~\ref{lem:upper-dyadic} and the argument from the beginning of the proof of Proposition~\ref{prop:upper-single-box} together show that with high probability $X' \subseteq \FS(A_*)$.  Further adding $\FS(A \setminus A_*)$ covers the rest of $X$.

\subsection{Using lower-dimensional information}
We prove Proposition~\ref{prop:upper-bound-combined} by stitching together the output of Proposition~\ref{prop:upper-bound-bulk} for various dimensions $1 \leq m \leq d$.

\begin{proof}[Proof of Proposition~\ref{prop:upper-bound-combined}]
Let $C,D$ be constants (depending only on $d$) to be specified later.  For brevity, write $\mathcal{U}:=\mathbb{Z}_{\geq 0}^d \setminus\mathcal{R}_d(p,C p^{-1}(\log p^{-1})^{d+1})$.  We partition $\mathcal{U}$ according to how many coordinates are small, as follows.  For each nonempty subset $I \subseteq [d]$, let $\mathcal{U}_I$ be the set of points $(x_1, \ldots, x_d) \in \mathcal{U}$ such that
$$\{i \in [d]: x_i \geq D\log p^{-1}\}=I.$$
Notice that $\mathcal{U}$ is the disjoint union of the sets $\mathcal{U}_I$, for $\emptyset \neq I \subseteq [d]$, since (as long as $p$ is sufficiently small relative to $C,D$) each point of $\mathcal{U}$ has at least one coordinate greater than or equal to $D\log p^{-1}$.  We will show that with high probability $S^*$ contains each $\mathcal{U}_I$, whence the proposition follows from a union bound over $I$.

Fix some nonempty $I \subseteq [d]$.  By permuting the coordinates, we may assume that $I=[m]$ for some $1 \leq m \leq d$.  Let $(x_1, \ldots, x_d) \in \mathcal{U}_I$.  For each index $i \leq m$ we have
$$x_i+\log p^{-1} \leq (1+1/D)x_i,$$
and for each index $i>m$ we have
$$x_i+\log p^{-1}< D(1+1/D) \log p^{-1}.$$
Plugging these estimates into the ``main'' constraint of $\mathcal{U}$ gives
\begin{align*}
C p^{-1} (\log p^{-1})^{d+1} & \leq \prod_{i \leq m}(x_i+\log p^{-1}) \cdot \prod_{i >m}(x_i+\log p^{-1})\\
 &\leq \prod_{i \leq m}(1+1/D)x_i \cdot \prod_{i >m} D(1+1/D) \log p^{-1}\\
 &=D^{d-m}(1+1/D)^d (\log p^{-1})^{d-|I|} \prod_{i \leq m}x_i.
\end{align*}
Taking $D$ sufficiently large ensures that $(1+1/D)^d \leq 2$ (say), so we see that $\mathcal{U}_I$ is contained in the nicer set
$$\mathcal{U}'_I:=\left\{
\begin{aligned}
(x_1,\ldots,x_m)\in\mathbb{Z}_{\geq 0}^m:  & ~ x_1\cdots x_m \geq (C D^{m-d}/2)p^{-1}(\log p^{-1})^{m+1}\\
 &\qquad \quad \text{and} ~x_i \geq D\log p^{-1} ~ \forall i  
\end{aligned}
\right\} \times [0,D\log p^{-1})^{d-m};$$
thus it suffices to show that $\FS(A)$ contains $\mathcal{U}'_I$ with high probability.  %Notice that the first multiplicand in $\mathcal{U}'_I$ closely resembles the $m$-dimensional set from Proposition~\ref{prop:upper-bound-bulk}.

Consider the part of $A$ in the coordinate plane where the last $d-m$ coordinates vanish, viz.
$$A_I:=A \cap (\mathbb{Z}_{\geq 0}^m \times \{0\}^{d-m}).$$
Applying Proposition~\ref{prop:upper-bound-bulk} in $m$ dimensions, we find that with high probability $\FS(A_I)$ contains
\begin{equation}\label{eq:single-slice}
\{(x_1, \ldots, x_m) \in \mathbb{Z}_{\geq 0}^m: x_1\cdots x_m \geq C'_{m}p^{-1}(\log p^{-1})^{m+1} ~\text{and} ~x_i \geq C'_{m} \log p^{-1} ~\forall i\} \times \{0\}^{d-m},
\end{equation}
where $C'_{m}$ is the constant from that proposition; of course then $\FS(A) \supseteq \FS(A_I)$ also contains \eqref{eq:single-slice}.  Henceforth assume that we are in such an outcome.  Now that we have handled the ``slice'' of $\mathcal{U}'_I$ where the second component is $0$, we will use it to obtain information about the remaining slices.

In anticipation of a dyadic decomposition, define the set of scales
$$\mathcal{E}:=\{(e_1, \ldots, e_m) \in \mathbb{Z}_{\geq 0}^m: e_1+\cdots+e_m=\lceil \log_2 (Ep^{-1} (\log p^{-1})^{m+1}) \rceil \},$$
where $E:=C D^{m-d}/10$ is another constant (whose precise value is not too important).  We henceforth omit ceiling functions for readability.  The number of ways to choose some $e=(e_1, \ldots, e_m) \in \mathcal{E}$ and some nonzero $z=(z_{m+1}, \ldots, z_{d}) \in [0,D\log p^{-1})^{d-m}$ is (crudely)
$$|\mathcal{E}| \cdot ((D\log p^{-1}+1)^{d-m}-1) \leq 2 D^{d-m} (\log p^{-1})^{d-1}.$$
For each such choice of $(e,z)$, consider the box
$$[1,2^{e_1}] \times \cdots \times [1,2^{e_m}] \times \{(z_{m+1}, \ldots, z_{d})\}.$$
This box has size $2^{e_1+\cdots+e_m}\geq Ep^{-1} (\log p^{-1})^{m+1}$, so with probability at least
$$1-(1-p)^{Ep^{-1} (\log p^{-1})^{m+1}} \geq 1-e^{-E(\log p^{-1})^{m+1}}$$
it contains some element $a_{(e,z)}$ of $A$.  A union bound ensures that with probability at least
$$1-2D^{d-m} (\log p^{-1})^{d-1} \cdot e^{-E(\log p^{-1})^{m+1}}=1-o_d(1),$$
this is the case for all pairs $(e,z)$ (note that the elements $a_{(e,z)}$ may not be distinct).  Assume that we are in such an outcome; it remains to show that then $\mathcal{U}'_I \subseteq S^*$.

Consider a point $(y_1, \ldots, y_m, z_{m+1}, \ldots, z_d) \in \mathcal{U}'_I$, and set $z:=(z_{m+1}, \ldots, z_d)$.  Due to the choice of $E$, there is (with room to spare) some $e \in \mathcal{E}$ such that $y_i \geq 2^{e_i+1}$ for all $1 \leq i \leq m$.  Take the element $a_{(e,z)}$ of $A$ from the previous paragraph.  Then $S^*$ contains $a_{(e,z)}+\FS(A_I)$,
which in turn (due to \eqref{eq:single-slice}) contains all of the points $(x_1, \ldots, x_m, z_{m+1}, \ldots, z_d) \in \mathbb{Z}_{\geq 0}^d$ with
$$(x_1-2^{e_1})\cdots (x_m-2^{e_m}) \geq C'_{m}p^{-1}(\log p^{-1})^{m+1} \quad \text{and} \quad x_i-2^{e_i} \geq C'_{m} \log p^{-1} ~\forall i.$$
Let us verify that our chosen point $(y_1, \ldots, y_m, z_{m+1}, \ldots, z_d)$ satisfies these two conditions.  For the second condition, the inequality $y_i-2^{e_i} \geq y_i/2$ and the definition of $\mathcal{U}'_I$ give
$$y_i-2^{e_i} \geq y_i/2 \geq (D/2)\log p^{-1};$$ this is acceptable as long as $D \geq 2C'_m$.  For the first condition, we have
$$(y_1-2^{e_1})\cdots (y_m-2^{e_m}) \geq 2^{-m}y_1 \cdots y_m \geq (2^{-m-1}C D^{m-d})p^{-1}(\log p^{-1})^{m+1};$$
this is again acceptable as long as $C$ is sufficiently large relative to $D,C'_m$.

This completes the proof that for such a choice of constants, with high probability $S^*$ contains each $\mathcal{U}'_I$ and hence contains $\mathcal{U}$.
\end{proof}

\section{Assembling the pieces}\label{sec:assembling}
The main theorems follow immediately from the results of the previous two sections and the deterministic inclusion $S^*=\FS(A) \subseteq \langle A \rangle=S$.  Indeed, Propositions~\ref{prop:lower} and~\ref{prop:upper-bound-combined} provide constants $c=c_d, C=C_d>0$ such that with high probability
$$|S^* \cap \mathcal{R}_d(p,c p^{-1}(\log p^{-1})^{d+1})| \leq |S \cap \mathcal{R}_d(p,c p^{-1}(\log p^{-1})^{d+1})|=o(|\mathcal{R}_d(p,c p^{-1}(\log p^{-1})^{d+1})|)$$
and
$$\mathbb{Z}_{\geq 0}^d \setminus \mathcal{R}_d(p,C p^{-1}(\log p^{-1})^{d+1}) \subseteq S^* \subseteq S;$$
this establishes the first two statements of each of Theorems~\ref{thm:main} and~\ref{thm:subset-sums} (on taking $C$ there to be the maximum of $1/c$ and the constant $C$ here).  The ``in particular'' statements follow from the estimate of $|\mathcal{R}_d(p,Cp^{-1}(\log p^{-1})^{d+1})|$ in Lemma~\ref{lem:appendix}.

\section{Concluding remarks}\label{sec:concluding}

We conclude with some remarks and open problems.

\subsection{Strengthenings of our main results}

An examination of the proof of Proposition~\ref{prop:upper-single-box} reveals that the only summands ever used are the elements of $A$ that lie on the coordinate axes or have all coordinates at most $p^{-2}$.  By running the argument more carefully, one can get by with only the elements of $A$ that lie on the coordinate axes or in the box $[0,2Y_1] \times \cdots \times [1,2Y_m]$.  Carrying this observation through the rest of the proof of Proposition~\ref{prop:upper-bound-combined}, we find that we can obtain the same conclusion for the restriction of $A$ to the union of the coordinate axes and $\mathcal{R}_d(p, \widetilde Cp^{-1}(\log p^{-1})^{d+1})$, where $\widetilde C>0$ is some constant slightly larger than $C$.  In other words, when $d>1$, the inclusion $\mathcal{R}_d(p, Cp^{-1}(\log p^{-1})^{d+1}) \subseteq \FS(A)$ can be explained using only the part of $A$ lying in a small region, which is a priori surprising.

The main result of \cite{KMS} (see Theorem~\ref{thm:1-dim} above) established the asymptotic values of the genus and Frobenius number not only with high probability but also in expectation.  Obtaining ``in expectation'' analogues of our Theorems~\ref{thm:main} and~\ref{thm:subset-sums} should be routine.  The work \cite{KMS} also determined the asymptotic with-high-probability and expected embedding dimension (minimum size of a generating set) of a random numerical semigroup.  By combining the arguments from \cite{KMS} with the work of the present paper, one can obtain analogous results in the multidimensional setting of Theorem~\ref{thm:main}.  More precisely, one can show that with high probability the minimal generating set of $\langle A \rangle$ is contained in $\mathcal{R}_d(p, Cp^{-1}(\log p^{-1})^{d+1})$ (for a suitable constant $C=C_d>0$) and contains a $\Omega_d(p^{-1})$-proportion of the elements of $\mathcal{R}_d(p, Cp^{-1}(\log p^{-1})^{d+1})$; and in particular the embedding dimension is $\asymp_d (\log p^{-1})^{2d}$.  The former statement can be obtained via the approach sketched in Section~\ref{sec:syndetic}, but with Lemma~\ref{lem:syndetic-random} replaced by the observation that with high probability $A$ contains some element $0<a \leq p^{-1} \log p^{-1}$ (say) and $\langle \{a\} \rangle \subseteq \langle A \rangle$ is already $\lfloor p^{-1} \log p^{-1} \rfloor$-syndetic.  We leave these matters to the interested reader.

\subsection{Similarity to other scaling limits}

We have presented Theorems~\ref{thm:main} and~\ref{thm:subset-sums} as identifying ``scaling limits'' for the gap sets of the random objects $S=\langle A \rangle$ and $S^*=\FS(A)$.  There is at least a topical similarity with other scaling limits, such as the limit shape for random integer partitions under the Plancherel measure (see \cite{LS,KV}).  It would be interesting to explore this resemblance further---both to find connections with scaling limits in probability theory and statistical physics, and to identify other random number-theoretic objects with describable limit shapes.

\subsection{Further questions}

Finally, we propose three ways in which it would be interesting to sharpen our results in future work.  The first question, of course, is whether the constant-factor gaps between lower and upper bounds in Theorems~\ref{thm:main} and~\ref{thm:subset-sums} are really necessary.  This problem already seems difficult in $1$ dimension.  In order to understand the higher-dimensional situation, one might need to devise a more ``sensitive'' scaling limit than our shifted hyperboloid.

A related question is whether the with-high-probability gap set shapes of $\langle A \rangle$ and $\FS(A)$ are macroscopically distinguishable.  Our main results imply that the scaling limits of these two gap sets agree up to dilation by a constant factor, and we ask if the scaling limits are in fact identical up to a $1+o(1)$ factor.  Again, even the $1$-dimensional case of this question seems quite challenging.

The third question is what one can say about \emph{pointwise} probabilities of particular elements being contained in $\langle A \rangle$ and $\FS(A)$.  For instance, are $\mathbb{P}[(x_1, \ldots, x_d) \in \langle A \rangle]$ and $\mathbb{P}[(x_1, \ldots, x_d) \in \FS(A)]$ roughly of the form $$1-\varphi_d\left(\frac{(x_1+\log p^{-1}) \cdots (x_d+\log p^{-1})}{p^{-1} (\log p^{-1})^{d+1}}\right)$$
for some quickly-decaying function $\varphi_d: \mathbb{R}_{>0} \to [0,1]$?  We do not even know if these probabilities enjoy monotonicity properties in the $x_i$'s when the $x_i$'s are large.  In $1$ dimension, for example, $$\mathbb{P}[4 \in \langle A \rangle]=1-(1-p)^3>1-(1-p)^2(1-p^2)=\mathbb{P}[5 \in \langle A \rangle]$$ shows that there can be non-monotonicity at least at small values.  More generally, one could study how sharply these probabilities jump near the interface of the scaling limit.

\appendix

\section{Volumes and lattice points under hyperboloids}\label{sec:appendix}

In this appendix we estimate the size of the shifted hyperboloid set $\mathcal{R}_d(p,Z)$, which plays a central role in this paper.  We start with the volume of the corresponding continuous region
$$\widetilde{R}_d(L,Z):=\{(x_1, \ldots, x_d) \in \mathbb{R}_{\geq 0}: (x_1+L) \cdots (x_d+L) \leq Z\}.$$

\begin{lemma}\label{lem:integral}
Let $d$ be a positive integer.  Let $L,Z>0$ be such that $Z \geq L^d$.  Then we have
$$\Vol_d(\widetilde{R}_d(L,Z))=\begin{cases}
Z-L, &\text{if }d=1;\\
Z\sum_{i=0}^{d-1} (-1)^i \cdot \frac{(\log Z-d\log L)^{d-1-i}}{(d-1-i)!}, &\text{if }d \geq 2.
\end{cases}$$
\end{lemma}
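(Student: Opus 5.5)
The plan is to change variables and induct on $d$. Setting $u_i:=x_i+L$ turns $\widetilde R_d(L,Z)$ into the region
$$Q_d(L,Z):=\{(u_1,\ldots,u_d)\in[L,\infty)^d:\ u_1\cdots u_d\le Z\},$$
which has the same volume. For $d=1$ this is the interval $[L,Z]$, of length $Z-L$, which is the first case of the statement.

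For the inductive step, slice $Q_d(L,Z)$ along the last coordinate. The slice at height $u_d=t$ is nonempty exactly when $L\le t\le Z/L^{d-1}$, and it equals $Q_{d-1}(L,Z/t)$. This gives the recursion
$$V_d(Z):=\Vol_d(Q_d(L,Z))=\int_L^{Z/L^{d-1}} V_{d-1}(Z/t)\,dt .$$
The hypothesis $Z\ge L^d$ ensures $Z/t\ge L^{d-1}$ throughout the range, so the inductive formula for $V_{d-1}$ applies to every slice.

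Next, substitute $s:=\log(Z/t)-(d-1)\log L$, which runs from $\log Z-d\log L$ down to $0$, with $dt=(Z/t)\,ds$. Under this substitution each term of the $(d-1)$-dimensional formula, of the form $(Z/t)\,s^{k}/k!$, becomes $Z\,s^{k}/k!$ up to a factor $t^{-1}$ that the Jacobian absorbs. The resulting integrals are elementary:
$$\int_0^{\Lambda} \frac{s^k}{k!}\,ds=\frac{\Lambda^{k+1}}{(k+1)!},\qquad \Lambda:=\log Z-d\log L .$$
Each power therefore moves up one degree. The alternating signs come from the lower-order terms, including the $-L$ in the base case, which propagates through the recursion.

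The cleanest execution is to prove the stronger closed form
$$V_d(Z)=Z\sum_{i=0}^{d-1}(-1)^i\frac{\Lambda^{d-1-i}}{(d-1-i)!}+(-1)^dL^d$$
simultaneously for all $d\ge1$. For $d=1$ this reads $Z-L$, matching the base case. The inductive step is then a direct check that integrating the $(d-1)$-dimensional expression reproduces it: the $Z$-terms integrate as above, and the constant $(-1)^{d-1}L^{d-1}$ contributes $(-1)^{d-1}L^{d-1}(Z/L^{d-1}-L)=(-1)^{d-1}Z+(-1)^dL^d$. The $(-1)^{d-1}Z$ supplies the new $i=d-1$ term of the sum, and $(-1)^dL^d$ is the new constant.

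The main obstacle is this bookkeeping of the constant term. The explicit $d=2$ computation,
$$\int_L^{Z/L}\Bigl(\frac{Z}{u}-L\Bigr)du=Z(\Lambda-1)+L^2,$$
shows that the displayed $d\ge2$ formula in the lemma omits the summand $(-1)^dL^d$, so as printed it is not exact. What the argument actually proves is the displayed sum plus $(-1)^dL^d$. This correction is bounded by $L^d\le Z$ and is negligible in the applications, where $L=\log p^{-1}$ and $Z\asymp p^{-1}(\log p^{-1})^{d+1}$, so $L^d\ll Z$.

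I would therefore carry out the induction for the corrected form above and record explicitly that the stated formula holds up to the additive error $(-1)^dL^d$. That weaker statement is all that is needed for $|\mathcal R_d(p,Z)|\asymp_d p^{-1}(\log p^{-1})^{2d}$ in Lemma~\ref{lem:appendix}.
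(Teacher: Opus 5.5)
Your proof is correct, and your diagnosis is right: the displayed $d\ge 2$ formula is missing the summand $(-1)^dL^d$. Already for $d=2$ and $Z=L^2$, which the hypothesis permits, the printed formula gives $-L^2$, although the region is a single point. Your induction establishes the exact identity
\[
\Vol_d(\widetilde{R}_d(L,Z))=Z\sum_{i=0}^{d-1}(-1)^i\frac{\Lambda^{d-1-i}}{(d-1-i)!}+(-1)^dL^d,\qquad \Lambda:=\log Z-d\log L,
\]
and this identity also covers the $d=1$ case. One small slip: with $s=\log(Z/t)-(d-1)\log L$ one has $dt=-t\,ds$, not $dt=(Z/t)\,ds$. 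Your verbal description is nonetheless the right computation: the Jacobian cancels the $1/t$ in $Z/t$, and the sign reverses the limits. So the integral $\int_0^\Lambda Z s^k/k!\,ds$ stands.

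The paper takes a different, non-inductive route. After $u_i=x_i+L$ it substitutes $y_i=\log u_i$, shifts by $\log L$, and integrates over the simplicial level sets of $y_1+\cdots+y_d$. This gives in one stroke
\[
\Vol_d(\widetilde{R}_d(L,Z))=L^d\int_0^{\Lambda}\frac{e^y y^{d-1}}{(d-1)!}\,dy,
\]
which is exact. The error enters only in the subsequent repeated integration by parts, where the boundary contribution at $y=0$ is dropped. That contribution comes from the last step $\int_0^\Lambda e^y\,dy=e^\Lambda-1$ and equals $(-1)^{d-1}\cdot(-1)=(-1)^d$; multiplied by $L^d$, this is exactly your $(-1)^dL^d$.

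What each route buys: the paper's representation makes the leading term $Z\Lambda^{d-1}/(d-1)!$ visible at a glance, since the weight $e^y$ concentrates the integral near $y=\Lambda$, and that leading term is all Lemma~\ref{lem:appendix} uses. Your slicing recursion is more elementary. Because it forces you to carry the constant term (the $-L$ from $d=1$) through each step, it is precisely what exposes the omission. As you say, $L^d\le Z$, and in the application $L^d=(\log p^{-1})^d$ is negligible against $p^{-1}(\log p^{-1})^{2d}$, so nothing downstream is affected.
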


\begin{proof}
The calculation is easy for $d=1$, so we restrict our attention to the case $d \geq 2$.  Making several changes of variables, we obtain
\begin{align*}
\Vol_d(\widetilde{R}_d(L,Z)) &=\int_{\substack{\{(x_1+L) \cdots (x_d+L) \leq Z;\\ x_1, \ldots, x_d \geq 0\}}} 1 \,dx_1 \cdots dx_d\\
 &=\int_{\substack{\{x_1\cdots x_d \leq Z;\\ x_1, \ldots, x_d \geq L\}}} 1 \,dx_1 \cdots dx_d\\
 &=\int_{\substack{\{y_1+\cdots+y_d \leq \log Z;\\ y_1, \ldots, y_d \geq \log L\}}} e^{y_1+\cdots+y_d} \,dy_1 \cdots dy_d\\
 &=e^{d\log L}\int_{\substack{\{y_1+\cdots+y_d \leq \log Z-d\log L;\\ y_1, \ldots, y_d \geq 0\}}} e^{y_1+\cdots+y_d} \,dy_1 \cdots dy_d\\
 &=e^{d\log L} \int_{0}^{\log Z-d\log L}e^y \int_{\substack{\{y_1+\cdots+y_{d-1} \leq y;\\ y_1, \ldots, y_{d-1} \geq 0\}}} 1  \,dy_1 \cdots dy_{d-1} \,dy.
\end{align*}
(Here we performed linear shifts in the second and fourth lines; substituted $y_i:=\log x_i$ in the third line; and replaced $y_d$ by the new variable $y:=y_1+\cdots+y_d$ in the fifth line.)  The inner integral is the volume of the $(d-1)$-dimensional simplex whose vertices are the origin and $y$ times the canonical basis vectors; the volume of this simplex is $y^{d-1}/(d-1)!$.  Thus
\begin{align*}
\Vol_d(\widetilde{R}_d(L,Z)) &=e^{d\log L} \int_{0}^{\log Z-d\log L}\frac{e^y y^{d-1}}{(d-1)!} \,dy.
\end{align*}
The latter integral can be evaluated by repeated integration by parts, which yields
\begin{align*}
\Vol_d(\widetilde{R}_d(L,Z)) &=e^{d \log L} \sum_{i=0}^{d-1} (-1)^i \cdot \frac{e^{\log Z-d\log L}(\log Z-d\log L)^{d-1-i}}{(d-1-i)!}\\
 &=Z\sum_{i=0}^{d-1} (-1)^i \cdot \frac{(\log Z-d\log L)^{d-1-i}}{(d-1-i)!},
\end{align*}
as desired.
\end{proof}

We can now treat the discrete sets $\mathcal{R}_d(p,Z)$.

\begin{lemma}\label{lem:appendix}
Let $d$ be a positive integer, and let $C>0$ be a constant.  For $p>0$, we have
$$|\mathcal{R}_d(p,Cp^{-1}(\log p^{-1})^{d+1})|=(1+o_{d,C}(1)) \cdot \frac{C p^{-1}(\log p^{-1})^{2d}}{(d-1)!}.$$
\end{lemma}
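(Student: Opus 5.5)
The plan is to compare the lattice-point count with the continuous volume from Lemma~\ref{lem:integral}, taking $L:=\log p^{-1}$ and $Z:=Cp^{-1}L^{d+1}$. Then $\log Z-d\log L=\log p^{-1}+\log C+\log L=(1+o(1))L$. So in the formula of Lemma~\ref{lem:integral} the $i=0$ term dominates. It equals $Z\,((1+o(1))L)^{d-1}/(d-1)!=(1+o(1))Cp^{-1}L^{2d}/(d-1)!$, and the terms with $i\geq 1$ are smaller by factors of $L^{-i}$. (For $d=1$ the formula gives $Z-L=(1+o(1))Cp^{-1}L^2$ directly.) Thus $\Vol_d(\widetilde R_d(L,Z))=(1+o_{d,C}(1))Cp^{-1}L^{2d}/(d-1)!$, and it remains to show that $|\mathcal{R}_d(p,Z)|$ differs from this volume by $o(p^{-1}L^{2d})$.

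For the discretization, I will attach to each lattice point $x\in\mathcal{R}_d(p,Z)$ the unit cube $x+[0,1)^d$, and alternatively the cube $x-[0,1)^d$.

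\textbf{Lower bound.} If $y$ lies in $\widetilde R_d(L,Z)$, then $\lfloor y\rfloor$ is coordinatewise smaller and so lies in $\mathcal{R}_d(p,Z)$, because the product is monotone. Hence $\widetilde R_d(L,Z)\subseteq\bigcup_{x\in\mathcal{R}_d(p,Z)}(x+[0,1)^d)$, which gives $|\mathcal{R}_d(p,Z)|\geq\Vol_d(\widetilde R_d(L,Z))$.

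\textbf{Upper bound.} For $x\in\mathcal{R}_d(p,Z)$, every point $y$ of the cube $x+[0,1)^d$ satisfies $\prod(y_i+L)\leq\prod(x_i+1+L)\leq(1+1/L)^d\prod(x_i+L)\leq(1+1/L)^dZ$. Hence $|\mathcal{R}_d(p,Z)|\leq\Vol_d(\widetilde R_d(L,(1+1/L)^dZ))$. Applying Lemma~\ref{lem:integral} again with $Z$ replaced by $Z':=(1+1/L)^dZ=(1+o(1))Z$ changes $\log Z'$ by only $o(1)$, so the asymptotic $(1+o(1))Cp^{-1}L^{2d}/(d-1)!$ is unchanged.

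The hypothesis $Z\geq L^d$ of Lemma~\ref{lem:integral} holds because $Z=Cp^{-1}L^{d+1}\gg L^d$ once $p$ is small. Combining the two bounds gives the lemma. The main point requiring care, though it is mild, is the upper-bound step: the shift $+1$ in each coordinate has to be absorbed multiplicatively. This works precisely because every factor $x_i+L$ is at least $L\to\infty$, so the shift $L$ in the definition of $\mathcal{R}_d(p,Z)$ is what makes the lattice count match the volume. Everything else is bookkeeping of $o(1)$ terms in the expansion of Lemma~\ref{lem:integral}, which depend only on $d$ and $C$.
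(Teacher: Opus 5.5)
Your proof is correct and is essentially the paper's argument: you sandwich $|\mathcal{R}_d(p,Z)|$ between volumes of continuous regions using the disjoint unit cubes $x+[0,1)^d$, then read off the asymptotic from Lemma~\ref{lem:integral}, where the $i=0$ term dominates. The only cosmetic difference is in the upper bound: the paper absorbs the unit shift additively, replacing $L$ by $L-1$ to use $\widetilde{R}_d(\log p^{-1}-1,Z)$, whereas you absorb it multiplicatively, replacing $Z$ by $(1+1/L)^dZ$; both change $\log Z-d\log L$ by only $o(1)$.
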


\begin{proof}
Let us identify each point $$(y_1, \ldots, y_d) \in \mathcal{R}_d(p,Cp^{-1}(\log p^{-1})^{d+1})$$ with the unit cube $$\{(z_1, \ldots, z_d) \in \mathbb{R}^d: \lfloor z_i \rfloor=y_i ~\forall i\}.$$
The union of these cubes contains the continuous region $$\widetilde{R}_d(\log p^{-1},Cp^{-1}(\log p^{-1})^{d+1}),$$
so we have the lower bound
$$|\mathcal{R}_d(p,Cp^{-1}(\log p^{-1})^{d+1})| \geq \Vol(\widetilde{R}_d(\log p^{-1},Cp^{-1}(\log p^{-1})^{d+1})).$$
At the same time, the cubes are all disjoint and their union is contained in the slightly larger continuous region $$\widetilde{R}_d(\log p^{-1}-1,Cp^{-1}(\log p^{-1})^{d+1}),$$
so we have the upper bound
$$|\mathcal{R}_d(p,Cp^{-1}(\log p^{-1})^{d+1})| \leq \Vol(\widetilde{R}_d(\log p^{-1}-1,Cp^{-1}(\log p^{-1})^{d+1})).$$
The desired estimate now follows from Lemma~\ref{lem:integral} (note that the $i=0$ term dominates the sum for $d \geq 2$).
\end{proof}

\section*{Acknowledgments}
The first author is supported by the EPSRC Scholarship under grant EP/Z534870/1.  The second author was supported in part by a NSF Mathematical Sciences Postdoctoral Research Fellowship under grant DMS-2501336. We thank Ben Green and Santiago Morales for helpful conversations.

\end{document}